\def\<{\langle}
\def\>{\rangle}
\numberwithin{equation}{section}
\def\<{\langle}
\def\>{\rangle}
\def\-{\overline}
\def\-{\overline}
\newtheorem{theorem}{Theorem}[section]
\newtheorem{lemma}[theorem]{Lemma}
\newtheorem{Corollary}[theorem]{Corollary}
\newtheorem{Definition}[theorem]{Definition}
\newtheorem{example}[theorem]{Example}
\newtheorem{remark}[theorem]{Remark}
\begin{document}
\title{\bf On the  $C^{\infty}$ version of the reflection principle for mappings between CR manifolds}

\author{S. Berhanu \thanks{Work supported in part by NSF DMS 1300026}\\Department of Mathematics\\Temple University\\Philadelphia, PA 19122 \and Ming Xiao\\Department of Mathematics\\Rutgers University\\New Brunswick, NJ 08854}

  \maketitle

\begin{abstract}
We prove a smooth version of the classical Schwarz reflection principle for CR \newline mappings between an abstract CR manifold $M$ and a generic CR manifold embedded in euclidean complex space. As a consequence of our results, we settle a conjecture of \newline X. Huang formulated in [Hu2].
\end{abstract}


\section{Introduction}
In this paper we study the regularity problem for CR mappings between
CR manifolds where the CR dimension of the source manifold is less than or equal to that of the target manifold. Our results imply a positive answer to a conjecture of X. Huang in [Hu2] and provide a solution to a question raised in [Fr1] (see Corollaries 2.10 and 2.11).

One of our theorems can be viewed as a smooth version of the analyticity
theorems of Forstneric ([Fr1]) and Huang [Hu1-2] for CR mappings between CR manifolds of differing dimensions. The article is devoted to results along the line of research on establishing the  smooth version of the  Schwarz reflection
principle for holomorphic maps in several variables.
Results of this type were first proved in the 70's starting with the
work
of Fefferman [Fe], Lewy [Le] and Pinchuk [Pi]. The seminal work [BJT] has influenced a lot of work on the subject. For extensive surveys
and many references on this research, the reader may consult
 the articles by Bedford [Be], Forstneric [Fr2], and Bell-Narasimhan [BN]. Among the many related papers we mention here [CKS], [CGS], [CS], [DW], [EH], [EL], [Fr1], [Fr3], [Hu1], [Hu2], [KP], [K], [La1], [La2], [La3], [M], [NWY], [Tu], and [W]. In [Fr3] Forstneric generalized Fefferman's theorem to CR homeomorphisms $f:M\to M'$ where $f^{-1}$ is CR,  $M$ and $M'$ are generic CR submanifolds of $\Bbb C^n$ with the same CR dimension. The book [BER] by
Baouendi, Ebenfelt, and Rothschild contains a detailed account and
references related to the study when the manifolds are real analytic
or real algebraic.

We prove results on the smoothness of CR maps where the source manifold $M$ is assumed to be an abstract (not necessarily embeddable) CR manifold. We mention that the results are new even when $M$ is embeddable. Our first main result, Theorem $2.3$, generalizes to abstract CR manifolds a theorem of Lamel in [La1] proved for generic CR manifolds embedded in complex spaces. The second main result, Theorem $2.5$, establishes the smoothness on a dense open subset of a $C^k$ CR mapping $F: (M,\mathcal V)\rightarrow (M',\mathcal V')$ where $(M,\mathcal V)$ is an abstract CR manifold of CR dimension $n$ and $M'\subset \Bbb C^{n+k}$ is a hypersurface that is strongly pseudoconvex. A condition on the Levi form of $(M,\mathcal V)$ is assumed in Theorem $2.5$.

Our approach is  based on the framework established by Roberts
[GR] in his thesis and a later paper by Lamel in [La1]. The notion of $k_{0}-$nondegeneracy of a CR mapping (Definition $2.1$) and the ``almost holomorphic" implicit function theorem of Lamel in [La1] and [La2] play crucial roles in the proofs and formulations of our results. The proof is
also motivated by the study of the real  analyticity for CR maps
between real analytic strongly pseudoconvex hypersurfaces of
different dimensions in Forstneric [Fr1] and Huang [Hu1]. We mention
that in [Fr1], Forstneric conjectured that $F$ must be real analytic
when $M_1\subset \Bbb C^{n+1}$ and $M_2\subset \Bbb C^{n+k}$ ($k\geq 2, n \geq 1$) are real analytic hypersurfaces with $M_1$ of finite type, $M_2$ strongly pseudoconvex, and he proved that this is indeed the case on a dense open set when $F$ is smooth. The conjecture of Forstneric was settled  by Huang  ([Hu1]) who obtained the analyticity of $F$ on a dense open subset assuming only that $F\in C^{k}$. The analyticity of $F$, when both $M_1$ and $M_2$ are as in [Fr1] and when $F$ is  only $C^{k}$-smooth also follows from Theorem $2.5$ in this paper and Forstneric's analyticity result when $F$ is smooth.

The paper is organized as follows. In Section $2$ we state the main results and prove a preliminary microlocal regularity result that is used in the proof of Theorem $2.5$ and supplies a good class of examples to which Theorem $2.3$ can be applied. Section $3$ contains the proof of Theorem $2.3$ and Theorem $2.5$ is proved in Section $4$. In an appendix we indicate why we focus only on CR mappings where the target manifold has a higher CR dimension than that of the source manifold.

{\bf{Acknowledgement:}} We use this opportunity to thank the anonymous referee for his helpful comments that have resulted in an improvement of the presentation of the results in this paper. The second author expresses his gratefulness to his advisor Xiaojun Huang for his constant help and encouragement. He also thanks Xiaoshan Li for his interest and help in
this work.
\section{Statements of the results and the proof of a \\ preliminary result}

Let $M$ be a smooth manifold and let $\mathcal{V}$ be a subbundle of $\mathbb{C}TM,$ the complexified tangent bundle of $M.$ The pair $(M,\mathcal{V})$ is called an abstract CR manifold if $\mathcal{V}$ is involutive and if for each $p \in M,$
$\mathcal{V}_{p} \cap \overline{\mathcal{V}}_{p}=0.$ Recall that the involutivity of $\mathcal{V}$ means that the space of smooth sections of $\mathcal{V},C^{\infty}(M,\mathcal{V}),$ is closed under commutators. Let $n$ be the complex dimension of the fibers $\mathcal{V}_{p}$ and write $\mathrm{dim}_{\mathbb{R}}M=2n+d.$ The number $n$ is called the CR dimension of $M,$ and  $d$ is called the CR codimension of $M.$ If $d=1,$ the CR manifold is said to be of hypersurface type. A smooth section of $\mathcal{V}$ is called a CR vector field and a function (or distribution)
is called CR if $Lf=0$ for any CR vector field $L.$ The CR manifold $(M,\mathcal{V})$ is called locally embeddable
if for any $p_{0} \in M,$  there exist $m$ complex-valued $C^{\infty}$ functions $Z_{1},\cdots,Z_{m}$ defined near $p_{0}$ with $m=\mathrm{dim}_{\mathbb{R}}M-n,\,$ such that
the $Z_{j}$ are CR functions near $p_{0},$ and the differentials $dZ_{1},\cdots,dZ_{m}$ are $\mathbb{C}-$linearly independent. In this case, the mapping
$$p \mapsto Z(p)=(Z_{1}(p),\cdots,Z_{m}(p)) \in \mathbb{C}^{m}=\mathbb{C}^{n+d}$$
is an immersion near $p_{0}.$ Thus, if $U$ is a small neighborhood of $p_{0},$ then $Z(U)$ is an embedded submanifold of $\mathbb{C}^{m}$ and is a generic CR submanifold of $\mathbb{C}^{m}$ whose induced CR bundle agrees with the push forward $Z_{*}(\mathcal{V})$ (see [BER] and [J] for more details).
Let $(M',\mathcal{V}')$ be another abstract CR manifold with CR dimension $n'$ and CR codimension $d'.$ A CR mapping of class $C^{k}(k \geq 1)~H:(M,\mathcal{V}) \rightarrow (M',\mathcal{V}')$ is a $C^{k}$ mapping $H: M\rightarrow M'$ such that for each $p \in M,$
$$dH(\mathcal{V}_{p}) \subset \mathcal{V}'_{H(p)}.$$

When $(M',\mathcal{V}')$ is a generic CR submanifold of $\mathbb{C}^{N'}(N'=n'+d'),$ then a $C^{k}$ mapping $H=(H_{1},\cdots,H_{N'}):M \rightarrow M'$ is a CR mapping if and only if each $H_{j}$ is a CR function.
One of our main results generalizes to an abstract CR manifold $(M,\mathcal{V})$ a regularity theorem of Lamel ([La1])
for CR mappings of embedded CR manifolds. We need to recall from [La1] the notion of nondegenerate CR mappings. Let $\widetilde{M} \subset \mathbb{C}^{N}$ and $\widetilde{M}' \subset \mathbb{C}^{N'}$ be two generic CR submanifolds of $\mathbb{C}^{N}$ and $\mathbb{C}^{N'}$ respectively. If $d$ and $d'$ denote the real codimensions of $\widetilde{M}$ and $\widetilde{M}',$ then $n=N-d$ and $n'=N'-d'$ are the CR dimensions of $\widetilde{M}$ and $\widetilde{M}'$ respectively. Let $H: \widetilde{M} \rightarrow \widetilde{M}'$ be a CR mapping of class $C^{k}.$
\begin{Definition} ([La1])
Let $\widetilde{M}, \widetilde{M}'$ and $H$ be as above and $p_{0} \in \widetilde{M}.$ Let $\rho=(\rho_{1},\cdots,\rho_{d'})$ be local defining functions for $\widetilde{M}'$ near $H(p_{0}),$ and choose a basis $L_{1},\cdots,L_{n}$ of CR vector fields for $\widetilde{M}$ near $p_{0}.$ If $\alpha=(\alpha_{1},\cdots,\alpha_{n})$ is a multiindex, write $L^{\alpha}=L_{1}^{\alpha_{1}} \cdots L_{n}^{\alpha_{n}}.$ Define the increasing sequence of subspaces $E_{l}(p_{0}) (0 \leq l \leq k)$ of $\mathbb{C}^{N'}$ by

$$E_{l}(p_{0})=\mathrm{Span}_{\mathbb{C}}\{L^{\alpha} \rho_{\mu,Z'}(H(Z),\overline{H(Z)})|_{Z=p_{0}}: 0 \leq |\alpha| \leq l, 1 \leq \mu \leq d'\}.$$
Here $\rho_{\mu,Z'}=(\frac{\partial \rho_{\mu}}{\partial z'_{1}},\cdots,\frac{\partial \rho_{\mu}}{\partial z'_{N'}}),$ and $Z'=(z'_{1},\cdots,z'_{N'})$ are the coordinates in $\mathbb{C}^{N'}.$ We say that $H$ is $k_{0}-$nondegenerate at $p_{0}\,\,(0 \leq k_{0} \leq k)$ if
$$E_{k_{0}-1}(p_{0}) \neq E_{k_{0}}(p_{0}) = \mathbb{C}^{N'}.$$
\end{Definition}
The dimension of $E_l(p)$ over $\Bbb C$ will be called the $l^{\text{th}}$ geometric rank of $F$ at $p$ and it will be denoted by $\text{rank}_l(F,p)$.

For the invariance of this definition under the choice of the defining functions $\rho_{\mu},$ the basis of CR vector fields and the choice of holomorphic coordinates in $\mathbb{C}^{N'},$ the reader is referred to [La2]. An intrinsic definition was presented in the paper [EL]. If $M$ is a manifold for which the identity map is $k_0-$nondegenerate, then the manifold is called  {\it $k_0-$nondegenerate}. This latter notion was introduced for embedded hypersurfaces in [BHR] and it is shown in [E] that it can be formulated for an abstract CR manifold. The reader is referred to these two references for a detailed treatment of this concept and its connection with holomorphic nondegeneracy in the sense of Stanton ([S]). In particular, in [BHR] and [E] it is shown that Levi-nondegeneracy of a CR manifold is equivalent to $1-$nondegeneracy. Thus the notion of $k_0-$nondegeneracy of a CR manifold can be viewed as a generalization of Levi nondegeneracy.\newline\newline The main result in [La1] is as follows:

\begin{theorem}\label{thml1}
Let $M \subset \mathbb{C}^{N}, M' \subset \mathbb{C}^{N'}$ be smooth generic submanifolds of $\mathbb{C}^{N}$ and
$\mathbb{C}^{N'}$ respectively, $p_{0} \in M, H=(H_{1},\cdots,H_{N'}): M \rightarrow M'$ a $C^{k_{0}}$ CR map which is $k_{0}-$nondegenerate at $p_{0}$ and extends continuously to a holomorphic map in a wedge $W$ with edge $M.$ Then $H$ is smooth in some neighborhood of $p_{0}.$
\end{theorem}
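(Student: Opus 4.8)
The plan is to follow Lamel's strategy: derive from the target's defining equations a ``reflection identity'' that expresses $H$, together with finitely many of its tangential derivatives, implicitly as the solution of a determined system which is holomorphic in the jet variables, where the solvability is guaranteed precisely by the $k_{0}$-nondegeneracy of $H$ at $p_{0}$; then invoke the almost holomorphic implicit function theorem of [La1] and [La2] to conclude that this solution, hence $H$ itself, is smooth near $p_{0}$.

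First I would normalize coordinates: take holomorphic coordinates vanishing at $p_{0}$ and at $H(p_{0})$, write $M$ near $p_{0}$ as $\mathrm{Im}\, w=\varphi(z,\bar z,\mathrm{Re}\, w)$ with $z\in\mathbb{C}^{n}$, $w\in\mathbb{C}^{d}$, $\varphi(0)=0$, $d\varphi(0)=0$, choose the associated basis $L_{1},\dots,L_{n}$ of $(0,1)$ CR vector fields tangent to $M$, and let $\rho=(\rho_{1},\dots,\rho_{d'})$ be defining functions of $M'$ near $H(p_{0})$. The only relations available are the tangential equations
$$\rho_{\mu}(H(Z),\overline{H(Z)})=0,\qquad Z\in M,\ 1\le\mu\le d',$$
and their derivatives along $M$. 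Applying the conjugate CR fields $\overline{L}^{\beta}$ for $0\le|\beta|\le k_{0}$ and using $\overline{L}_{j}\overline{H_{l}}=0$ together with the chain rule, one gets for each $(\mu,\beta)$ an identity in which the holomorphic derivatives $\partial_{Z'}^{\gamma}\rho_{\mu}$ are evaluated at $(H(Z),\overline{H(Z)})$ and multiplied by universal polynomials in the derivatives $\overline{L}^{\delta}H(Z)$, $1\le|\delta|\le|\beta|$; the term of top differentiation order in $\beta$ is $\langle\,\rho_{\mu,Z'}(H(Z),\overline{H(Z)}),\ \overline{L}^{\beta}H(Z)\,\rangle$ modulo lower-order terms. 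By the very definition of $E_{l}(p_{0})$, the span over $\mu$ and $|\beta|\le k_{0}$ of the corresponding coefficient covectors at $p_{0}$ is $E_{k_{0}}(p_{0})$; hence the hypothesis $E_{k_{0}}(p_{0})=\mathbb{C}^{N'}$ says exactly that a suitable square subsystem has invertible coefficient matrix at $p_{0}$, and therefore in a neighborhood of $p_{0}$.

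The central step is then to solve this subsystem and produce the reflection identity. After inverting the coefficient matrix and using the Lie bracket relations among the CR, conjugate-CR, and transverse vector fields on $M$ (together with $L_{i}H=0$) to reorganize the derivative terms, one arrives at a system of the form
$$\Lambda(Z)=\Psi\big(Z,\bar Z,\Lambda(Z)\big),\qquad Z\in M \text{ near } p_{0},$$
where $\Lambda(Z)$ collects $H(Z)$ and a finite jet of tangential derivatives of $H$, $\Psi$ is a $C^{\infty}$ map defined near the corresponding jet value, holomorphic in the $\Lambda$-variable, and $I-\partial_{\Lambda}\Psi$ is invertible at $p_{0}$. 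The almost holomorphic implicit function theorem of Lamel now applies: such a system has a unique smooth solution near $p_{0}$, and — this is the point of the ``almost holomorphic'' refinement over the classical implicit function theorem — the merely $C^{k_{0}}$ solution $\Lambda$ built from $H$ must coincide with it, because $H$ extends continuously to a holomorphic map on the wedge $W$ with edge $M$. Consequently $\Lambda$, and in particular $H$, is $C^{\infty}$ in a neighborhood of $p_{0}$.

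The hardest part is establishing the almost holomorphic structure of the last displayed system. Since $M'$ is only smooth, $\rho$ is only smooth, so $\rho_{\mu,Z'}(H(Z),\overline{H(Z)})$ and its relatives are \emph{not} holomorphic in the ``$\overline{H}$ slot'', and one must arrange — by a careful choice of which equations and which $N'$ components to solve for, and by replacing $\rho$ near $M'$ by an almost holomorphic extension — that the resulting $\Psi$ is genuinely holomorphic in $\Lambda$ up to errors that are flat on $M$ and hence invisible to the implicit function theorem. A second, intertwined difficulty is regularity bookkeeping: $H$ is a priori only $C^{k_{0}}$, so the order-$k_{0}$ derivatives appearing in $\Lambda$ live at the edge of existence; one must make sense of all the identities at this borderline regularity and supply, through the wedge extension, exactly the boundary behaviour that allows the almost holomorphic implicit function theorem to identify $H$ with the smooth solution — the step the classical reflection principle cannot perform and for which this machinery was built.
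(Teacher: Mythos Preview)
There is a genuine gap at the step where you invoke the almost holomorphic implicit function theorem. That theorem (Theorem~3.1 in this paper, taken from [La1]) does \emph{not} assert that a system $\Lambda(Z)=\Psi(Z,\bar Z,\Lambda(Z))$ has a unique smooth solution in the base variables which the $C^{k_{0}}$ object $H$ must then equal. What it actually produces---after one differentiates $\rho_\mu(H,\overline H)=0$ by the CR fields $L^\alpha$ (not $\overline L^\beta$; since $L_jH_l=0$ the derivatives fall on $\overline H$, which is why the ``$W$-slot'' below is polynomial) and selects $N'$ equations via $k_0$-nondegeneracy---is a smooth map $\psi(Z',\overline{Z'},W)$, holomorphic in $W$ and with $D^\alpha\partial\psi_j/\partial z'_i$ vanishing on the solution set, such that
\[
H(Z)=\psi\bigl(H(Z),\overline{H(Z)},(L^{\beta}\overline{H}(Z))_{|\beta|\le k_{0}}\bigr)\quad\text{on }M\text{ near }p_{0}.
\]
The right-hand side still depends on $\overline{H}$ and on $L^\beta\overline H$, which are only $C^{k_0}$ (indeed merely continuous at top order). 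Nothing in this identity alone forces $H$ to be smooth, and no bootstrap is available: each application returns only the regularity of $L^\beta\overline H$. Your packaging of $H$ together with its derivatives into a single ``$\Lambda$'' with $\Psi$ holomorphic in $\Lambda$ is not achievable, since $\rho$ is only smooth in $(Z',\overline{Z'})$ and hence $\Psi$ cannot be made holomorphic in the $H$-component.

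The missing mechanism---which is exactly how the paper uses the wedge hypothesis in Section~3 (proving the more general Theorem~2.3; the statement you are asked about is quoted from [La1] and not re-proved here)---is an edge-of-the-wedge argument. The holomorphic extension $F$ of $H$ into the wedge $W$ makes $\overline{F}$, and hence $M^{\beta}\overline{F}$ (with $M_i$ almost-holomorphic extensions of the $L_i$), almost holomorphic in the \emph{opposite} wedge. Because $\psi$ is holomorphic in its $W$-slot and its dependence on the $Z'$-slot is flat along the solution set, the composite
\[
g_j(z,s,t)=\psi_j\bigl(F(z,s,-t),\overline{F}(z,s,-t),(M^{\beta}\overline{F}(z,s,-t))_{|\beta|\le k_0}\bigr)
\]
gives an almost-holomorphic extension of $H_j$ from the opposite side, with boundary value $H_j$. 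Thus $H$ has almost-holomorphic extensions from both wedges, and an edge-of-the-wedge theorem (Theorem~V.3.7 in [BCH] in the paper's version) yields $H\in C^\infty$ near $p_0$. Your proposal never constructs this opposite-side extension and never invokes any two-sided principle; the clause ``because $H$ extends continuously to a holomorphic map on the wedge $W$'' is used only as a vague uniqueness justification, not as the constructive input it has to be.
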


Here recall that if $p_{0} \in M$, $d=$ the CR codimension of $M$, and $U \subset \mathbb{C}^{N}$ is a neighborhood of $p_{0},$ a wedge $W$ with edge $M$ centered at $p_{0}$ is defined to be an open set of the form:
$$W=\{Z \in U: r(Z,\overline{Z}) \in \Gamma \},$$ where  $\Gamma \subset \mathbb{R}^{d}$ is an open convex cone, and
 $r=(r_{1},\cdots,r_{d})$ are defining functions for $M$ near $p_{0}.$
In section 3, we will prove the following generalization of Theorem \ref{thml1}.

\begin{theorem}\label{thm12}
Let $(M,\mathcal{V})$ be an abstract CR manifold and $M' \subset \mathbb{C}^{N'}$ a generic CR submanifold of $\mathbb{C}^{N'}.$ Let $H=(H_{1},\cdots,H_{N'}): M \rightarrow M'$ be a CR mapping of class $C^{k_{0}}$ which is
$k_{0}-$nondegenerate at $p_{0}$ and assume that for some open convex cone $\Gamma \subset \mathbb{R}^{d},$
$$\mathrm{WF}(H_{j})|_{p_{0}} \subset \Gamma, j=1,\cdots,N'$$
where $d$ is the CR codimension of $M.$ Then $H$ is $C^{\infty}$ in some neighborhood of $p_{0}.$
\end{theorem}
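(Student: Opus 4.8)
The plan is to transport Lamel's argument for Theorem~\ref{thml1} to an abstract source $(M,\mathcal V)$, with the wavefront set hypothesis serving as the intrinsic replacement for holomorphic extension to a wedge. Fix defining functions $\rho=(\rho_{1},\dots,\rho_{d'})$ for $M'$ near $H(p_{0})$, a basis $L_{1},\dots,L_{n}$ of CR vector fields for $(M,\mathcal V)$ near $p_{0}$, and almost holomorphic extensions $\widehat{\rho}_{\mu}(Z',\chi)$ of the $\rho_{\mu}$, so that $\partial_{\overline{Z'}}\widehat{\rho}_{\mu}$ and $\partial_{\overline{\chi}}\widehat{\rho}_{\mu}$ are flat on $\{\chi=\overline{Z'}\}$. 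On $M$ one has $\widehat{\rho}_{\mu}(H(Z),\overline{H(Z)})=0$; since each $H_{j}$ is CR ($L_{i}H_{j}=0$) and the conjugate partials of $\widehat{\rho}_{\mu}$ vanish along the anti-diagonal, differentiation along the $L^{\alpha}$, $|\alpha|\le k_{0}$, produces a system of the form
\begin{equation*}
\sum_{1\le|\beta|\le|\alpha|}\partial_{\chi}^{\beta}\widehat{\rho}_{\mu}\big(H(Z),\overline{H(Z)}\big)\,P_{\alpha,\beta}\!\Big(\big(L^{\gamma}\overline{H_{k}}(Z)\big)_{|\gamma|\le|\alpha|,\,k}\Big)=0,\qquad |\alpha|\le k_{0},\ \mu\le d',
\end{equation*}
with universal polynomials $P_{\alpha,\beta}$ (note $L^{\gamma}\overline{H_{k}}\in C^{0}$ since $H\in C^{k_{0}}$). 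The hypothesis of $k_{0}$-nondegeneracy says $E_{k_{0}}(p_{0})=\mathbb{C}^{N'}$, i.e.\ $\mathrm{rank}_{k_{0}}(H,p_{0})=N'$, which is exactly the statement that the Jacobian of this system in the unknown $H$ is onto $\mathbb{C}^{N'}$ at $p_{0}$, hence near $p_{0}$ (cf.\ [La1]). By the almost holomorphic implicit function theorem of Lamel ([La1], [La2]) we can therefore solve, obtaining on $M$ near $p_{0}$ a reflection identity
\begin{equation*}
H_{j}(Z)=\Psi_{j}\!\Big(\overline{H(Z)},\,\big(L^{\gamma}\overline{H_{k}}(Z)\big)_{1\le|\gamma|\le k_{0},\,k}\Big),\qquad j=1,\dots,N',
\end{equation*}
where each $\Psi_{j}$ is $C^{\infty}$ and almost holomorphic in the complex variables into which the $\overline{H}$-data is inserted; this is an identity of continuous functions on $M$, hence of distributions.

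The next step is microlocal, and here the abstract setting is met head-on. Since $\mathrm{WF}(\overline{u})=-\mathrm{WF}(u)$ and differential operators with smooth coefficients do not enlarge wavefront sets, the hypothesis $\mathrm{WF}(H_{j})|_{p_{0}}\subset\Gamma$ (for all $j$) yields $\mathrm{WF}\big(L^{\gamma}\overline{H_{k}}\big)|_{p_{0}}\subset-\Gamma$ for all $k$ and $|\gamma|\le k_{0}$. As $\Gamma$ is convex, finite sums and products of distributions whose wavefront sets over $p_{0}$ lie in $-\Gamma$ again have this property, and --- this is the technical heart, played in the embedded case by the wedge available in Theorem~\ref{thml1} and here by the FBI transform analysis of Roberts [GR] together with Lamel's almost holomorphic calculus --- so does composition with an almost holomorphic map. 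Equivalently, the $\overline{H_{k}}$ and their CR derivatives $L^{\gamma}\overline{H_{k}}$, having wavefront set over $p_{0}$ in the convex cone $-\Gamma$, are boundary values of almost holomorphic functions on a wedge with edge $M$ in the direction $-\Gamma$ --- the intrinsic substitute, furnished by the hypo-analytic/Baouendi--Treves machinery, for holomorphic extension to an ambient wedge --- and substituting these into the almost holomorphic map $\Psi_{j}$ and invoking the reflection identity exhibits $H_{j}$ as a boundary value of the same type. Hence $\mathrm{WF}(H_{j})|_{p_{0}}\subset-\overline{\Gamma}$ for every $j$. Finally, $\Gamma$ being a proper open convex cone, $\overline{\Gamma}\cap(-\overline{\Gamma})=\{0\}$ while $0\notin\Gamma$, so $\Gamma\cap(-\overline{\Gamma})=\emptyset$; combining this with the hypothesis gives $\mathrm{WF}(H_{j})|_{p_{0}}=\emptyset$ for each $j$, that is, $H\in C^{\infty}$ near $p_{0}$.

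The main obstacle I anticipate is precisely the microlocal step over an abstract, possibly non-embeddable, $M$: with no ambient $\mathbb{C}^{N}$ there is no genuine wedge into which to extend the $\overline{H_{k}}$ holomorphically, so the equivalence between ``wavefront set in a convex cone'' and ``almost holomorphic wedge extension,'' together with the stability of this property under the almost holomorphic implicit-function map $\Psi_{j}$, must be proved directly from the hypo-analytic structure of $(M,\mathcal V)$ and its FBI transform. This has to be carried out while controlling the low regularity of the data (the top-order terms $L^{\gamma}\overline{H_{k}}$, $|\gamma|=k_{0}$, are merely continuous) and the fact that $\Psi_{j}$ and the $\widehat{\rho}_{\mu}$ are only almost holomorphic because $M'$ is merely smooth; this is where the frameworks of Roberts and Lamel do the real work. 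A minor but necessary preliminary is to fix, via the hypo-analytic structure, the identification of the characteristic fiber of $\mathcal V$ over $p_{0}$ with $\mathbb{R}^{d}$ (with matching sign conventions), so that the hypothesis on $\mathrm{WF}(H_{j})$ and the assertions deduced about $\mathrm{WF}(H_{j})$ and $\mathrm{WF}(\overline{H_{k}})$ are expressed in the same coordinates.
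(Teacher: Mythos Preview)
Your outline matches the paper's proof essentially step for step: derive the reflection identity $H_j=\psi_j\big(H,\overline H,(L^\beta\overline H)_{|\beta|\le k_0}\big)$ from Lamel's almost-holomorphic implicit function theorem, then run a microlocal argument showing that $\mathrm{WF}(H_j)|_{p_0}$ is also contained in (the closure of) $-\Gamma$, hence disjoint from $\Gamma$, hence empty. For the step you correctly flag as the obstacle, the paper does not cite a general ``wavefront-in-cone $\Leftrightarrow$ almost-holomorphic wedge extension'' principle but builds everything by hand: in local coordinates $(z,s)\in\mathbb C^n\times\mathbb R^d$ on $M$ it constructs explicit extensions $F^j(z,s,t)$ of the $H_j$ into an artificial wedge $\{t\in\Gamma_1\}$ (with $\Gamma_1$ dual to $\Gamma$) by splitting the Fourier inversion integral of $\phi H_j$ according to whether the frequency $(\xi,\eta,\sigma)$ has $\sigma\in\Gamma$ or not, and reads off directly the tempered-growth bound $|D^\alpha F^j|\le C|t|^{-\lambda}$ and the flatness $D^\alpha\overline\partial_{w_\nu}F^j=O(|t|^m)$; it then sets $g_j(z,s,t)=\psi_j\big(F(z,s,-t),\overline F(z,s,-t),(M^\beta\overline F(z,s,-t))_{|\beta|\le k_0}\big)$ on the opposite wedge $\{t\in-\Gamma_1\}$, checks the same two estimates there using the $\overline Z'$-flatness of $\psi_j$ along the graph, and invokes Theorem~V.3.7 of [BCH] to conclude $\mathrm{WF}(H_j)|_0\cap\Gamma=\emptyset$. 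So your anticipated difficulty is exactly where the paper does the real work, and its resolution is by explicit Fourier construction rather than by an off-the-shelf equivalence.
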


In Theorem \ref{thm12},  $\mathrm{WF}(u)$ denotes the $C^{\infty}$ wave front set of $u,$ that is,
$$\mathrm{WF}(u)=\{\sigma\in T^{\ast}M: u~\text{is not microlocally smooth at}~\sigma\}. $$
For details about the $C^{\infty}$ wave front set of a function, see [H].
\begin{remark}
In Theorem \ref{thml1}, the assumption that $H$ is the boundary value of a holomorphic function in a wedge  implies the much weaker condition that $\text{WF}(H_{j})|_{p_{0}} \subset \Gamma$ for some $\Gamma$ as in Theorem \ref{thm12}. Indeed, in the embedded case as in Theorem \ref{thml1}, a CR function $h$ on $M$ is the boundary value of a holomorphic function in a wedge if and only if its hypo-analytic wave front set is contained in an acute cone which means that the FBI transform of $h$ decays exponentially. Our assumption in Theorem \ref{thm12} only requires the FBI transform to decay rapidly.
\end{remark}
In what follows, given a CR manifold $(M, \mathcal{V})$, $T^0$ will denote its characteristic bundle, that is, $T^0=\{\sigma\in T^{\ast}M:\langle \sigma, L \rangle =0\,\,\text{for every smooth section $L$ of}\,\, \mathcal{V}\}$. 
\begin{theorem}\label{thm}
Let $(M,\mathcal{V})$ be an abstract CR manifold with CR dimension $n \geq 1$ such that the Levi form at every covector $\sigma\in T^0$ has a nonzero eigenvalue. Suppose $M' \subset \mathbb{C}^{n+k}$ is a hypersurface  that is strongly pseudoconvex ($k\geq 1$) and let $\mathcal V'$ denote the CR bundle of $M'$. Let $F=(F_{1},\cdots,F_{n+k}): M \rightarrow M'$ be a CR mapping of class $C^{k}$ whose differential $dF:\mathcal V_p\rightarrow \mathcal V_{F(p)}' $ is injective at every $p\in M$. Then $F$ is $C^{\infty}$ on a dense open subset of $M$.
\end{theorem}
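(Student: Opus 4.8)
My plan is to reduce everything to Theorem~\ref{thm12}. That theorem upgrades a $C^{k_0}$ CR map that is $k_0$-nondegenerate at a point and has its component wave front sets in a common open convex cone there to a $C^{\infty}$ map near that point; so it suffices to produce a dense open subset of $M$ on which $F$ is $k_0$-nondegenerate for some $k_0 \le k$ \emph{and} each $\mathrm{WF}(F_j)$ lies in a common open convex cone $\Gamma \subset \mathbb{R}^d$. The wave-front half I would get at \emph{every} point of $M$: each $F_j$ is a CR function, hence $\mathrm{WF}(F_j) \subset T^0$ by microlocal ellipticity of the CR vector fields off the characteristic set; and since the Levi form $\mathcal{L}_\sigma$ depends $\mathbb{R}$-linearly on $\sigma \in T^0_p$, the set $\Sigma^+_p = \{\sigma \in T^0_p : \mathcal{L}_\sigma \ \text{positive semidefinite}\}$ is a closed convex cone which is salient precisely because, by hypothesis, $\mathcal{L}_\sigma \neq 0$ for $\sigma \neq 0$ (otherwise $\pm\sigma \in \Sigma^+_p$ would force $\mathcal{L}_\sigma = 0$). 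The preliminary microlocal regularity result of Section~2 --- a microlocal form of the Lewy extension phenomenon in the presence of a nonzero Levi eigenvalue --- shows $\mathrm{WF}(F_j)|_p \subset \Sigma^+_p$, which in local coordinates sits inside such a cone $\Gamma$. So the whole theorem comes down to the claim that $F$ is $k_0$-nondegenerate, for some $k_0 \le k$, on a dense open subset of $M$, and this I would prove by induction on $k$.

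The soft part, valid for all $k$: the functions $p \mapsto \mathrm{rank}_l(F,p) = \dim_{\mathbb{C}} E_l(F,p)$ ($0 \le l \le k$) are lower semicontinuous --- a span of finitely many continuous vectors can only jump up in dimension --- so on a dense open set $\Omega^{*}$ they are all locally constant; there the filtration $E_0 \subset E_1 \subset \cdots$ stabilizes at some step, and once stable it stays constant (standard, using involutivity of $\mathcal{V}$ and the constant rank to produce differentiable straightening coefficients). The key computation identifies $E_1$. With $\rho$ a defining function of the hypersurface $M'$ and $L_1,\dots,L_n$ a basis of CR vector fields, the CR condition $L_i F_a = 0$ gives $L_i \rho_{Z'}(F,\overline{F})|_p = \mathcal{H}_p\,\overline{u_i}$, where $\mathcal{H}_p = (\rho_{z'_a \bar z'_b}(F(p),\overline{F(p)}))$ is the complex Hessian and $u_i \in \mathbb{C}^{n+k}$ represents $dF(\overline{L_i}) \in T^{1,0}_{F(p)}M'$. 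Injectivity of $dF|_{\mathcal{V}_p}$ makes $u_1,\dots,u_n$ independent, spanning an $n$-plane $W_p \subset T^{1,0}_{F(p)}M'$; strong pseudoconvexity makes $\mathcal{H}_p$ restrict to a positive definite Hermitian form there, so $\mathcal{H}_p$ is injective on $\overline{W_p}$, and $\rho_{Z'}(F(p),\overline{F(p)})$ --- which pairs to $0$ with $W_p$ while no nonzero vector of $\mathcal{H}_p\overline{W_p}$ does --- lies outside $\mathcal{H}_p\overline{W_p}$. Hence $\dim E_1(F,p) = n+1$ everywhere. For $k = 1$ this is already $n+k$, so $F$ is $1$-nondegenerate on all of $M$, settling the base case.

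For $k \ge 2$, assuming the theorem for smaller values, I would split $M = U_1 \cup B$ with $U_1 = \{p : \mathrm{rank}_k(F,p) = n+k\}$ (open) and $B = M \setminus U_1$ (closed). On $\Omega^{*} \cap U_1$ the filtration reaches $\mathbb{C}^{n+k}$ at some step $k_0$; since $\mathrm{rank}_0 = 1$, $\mathrm{rank}_1 = n+1$, and each step before $k_0$ is strict, $k_0 - 1 \le (n+k) - (n+1) = k-1$, so $k_0 \le k$ and $F$ is $k_0$-nondegenerate there --- Theorem~\ref{thm12} gives $F \in C^{\infty}$. On $\mathrm{int}(B)$, after shrinking to a dense open subset of $\Omega^{*} \cap \mathrm{int}(B)$ where the stabilized space $E_k(F,p)$ has constant rank $m$ with $n+1 \le m \le n+k-1$, I would use the identities $L^\alpha(\rho(F,\overline{F})) \equiv 0$ to conclude that $F$ maps locally into a strongly pseudoconvex hypersurface $\widehat{M}' \subset M'$ lying in a coordinate $\mathbb{C}^{m}$ (after a local holomorphic change of coordinates), with $dF|_{\mathcal{V}}$ still injective; since $m - n \le k-1$, the inductive hypothesis gives $F \in C^{\infty}$ on a dense open subset. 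Finally $U_1 \cup \mathrm{int}(B)$ is dense open in $M$ (its complement is $\partial U_1$, which is nowhere dense), and the union of the two smoothness loci is dense open in it, hence in $M$.

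The hard part will be the reduction step for $k \ge 2$: extracting from the degeneracy of the filtration $E_k(F,p)$ the fact that $F$ locally maps into a lower-dimensional strongly pseudoconvex hypersurface inside $M'$. This is a relative version of the implication ``$M'$ Levi-nondegenerate $\Rightarrow$ $M'$ holomorphically nondegenerate'': the annihilator $E_k(F,p)^{\perp}$ produces a nonvanishing vector field along $F$ with values in $T^{1,0}M'$ that, by the reflection identities, behaves like the restriction of a holomorphic vector field tangent to $M'$, and one must straighten it to a coordinate slice. One cannot simply invoke holomorphic nondegeneracy of $M'$, because that field is tangent to $M'$ only along $F(M)$ and not in a full neighborhood of a point of $M'$; making the straightening precise in the present (merely $C^k$, abstract-source) setting --- and checking that the resulting slice meets $M'$ transversally in a genuine strongly pseudoconvex hypersurface on which $dF|_{\mathcal{V}}$ stays injective --- is where the work concentrates. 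The wave-front reduction, the semicontinuity and stabilization bookkeeping, and the $E_1$ computation should all be routine once set up.
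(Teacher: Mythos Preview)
Your wave-front reduction and your treatment of the nondegenerate locus $U_1=\Omega_1$ are correct and match the paper's argument (Theorem~2.9 for the cone, then Theorem~2.3 on $\Omega_1$; the constant-rank detour is unnecessary since $\mathrm{rank}_k(F,p)=n+k$ already gives $k_0$-nondegeneracy for some $k_0\le k$). The gap is in your inductive step on $\mathrm{int}(B)=\Omega_2$: the claim that the degeneracy of $E_k(F,\cdot)$ forces $F$ to land, after a holomorphic change of coordinates in $\mathbb{C}^{n+k}$, in a strongly pseudoconvex hypersurface of a lower-dimensional $\mathbb{C}^{m}$ is not established and, in the present $C^k$ setting, is not available. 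What the reflection identities actually produce (your ``vector field along $F$'') are relations of the form $a_j=\sum G_i^{j}\,a_i$ with $a_i=\overline{F_i}+\phi^*_{z'_i}(F,\overline F)$ and with coefficients $G_i^{j}$ that are merely $C^{k-l}$ \emph{CR functions on $M$}; conjugating, $F_j$ is expressed through the other components with \emph{anti}-CR coefficients. These coefficients need not be constants nor pullbacks via $F$ of holomorphic functions on the target, so there is no holomorphic straightening putting $F(M)$ into a slice $\mathbb{C}^m$. The analogous reduction in the literature (Forstneri\'c, Huang) is carried out only \emph{after} one already knows $F$ is smooth/real-analytic; invoking it here would be circular. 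You flag this as ``where the work concentrates,'' but no mechanism is offered, and I do not see one.

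The paper handles $\Omega_2$ without any target reduction. At a point where $\mathrm{rank}_{l}(F,p)=n+l$ and $\mathrm{rank}_{l+1}\equiv n+l$ nearby, it shows (Theorem~4.8) that the quotients of the relevant $(n+l)\times(n+l)$ minors by the nondegenerate one are CR: the key device is a Sylvester-type determinantal identity (Lemma~4.4) which makes the numerator of $L_\nu(\text{quotient})$ a sum of $2\times2$ determinants of $(n+l)$-minors drawn from an $(n+l+1)\times(n+l+1)$ array with vanishing determinant, hence each term vanishes. Those CR quotients $G_i^{j}$ then feed directly into an $(n+k)\times(n+k)$ system---equations $\rho(F,\overline F)=0$, $L^{\beta_t}\rho(F,\overline F)=0$ for $t\le n+l-1$, together with $a_j=\sum G_i^{j}a_i$ for $n+l\le j\le n+k-1$---whose $Z'$-Jacobian is invertible, and the almost-holomorphic implicit function theorem plus the two-sided wave-front estimate gives $F\in C^\infty$ near $p$. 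If you want to repair your outline, replace the induction-by-target-slicing with this direct argument; the determinant identity is the missing idea.
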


We note that the preceding theorem allows a weakening of the smoothness assumption in Theorem 1.2 of [EL] on finite jet determination. The theorem also implies that some of the results in [BR] hold under a weaker smoothness assumption on the CR maps involved. If $M\subset \Bbb C^N, M'\subset \Bbb C^{N'}$ are hypersurfaces, with $M$ Levi nondegenerate at $p\in M$ and $F:M\rightarrow M'$ is a CR mapping which is transversal at $p$, that is, $dF(\Bbb CT_pM)$ is not contained in $\mathcal V'_{F(p)}+\overline{\mathcal V'_{F(p)}}$, then $F$ is a local embedding (see section 3.4 in [EL]). Many other situations where $(M,\mathcal{V})$ and $(M',\mathcal V')$ are as in Theorem \ref{thm} and $dF$ is injective can be found in the work [BR].

Let $M, M', F$ be as in Theorem 2.5. Define
$$\Omega_{1}=\{p \in M: \mathrm{rank}_{k}(F,p)=n+k \},$$
$$\Omega_{2}=\{p \in M: \mathrm{rank}_{k}(F,q) \leq n+k-1~\text{at all points}~q~ \text{in some neighborhood of}~p~\text{in}~M \},$$
$$\Omega=\{ p \in M: F~\text{is smooth in a neighborhood of}~p~\text{in}~M\}.$$
For each $1 \leq l \leq k,$ we also set
$$S_{l}:=\{p \in M:\mathrm{rank}_{l}(F,p) \leq n+l-1 \}.$$
Note that $\Omega_{1},\Omega_{2},\Omega$ are all open in $M,$ and $\Omega_{1} \cup \Omega_{2}$ is dense in $M.$ Moreover, $\Omega_{2} \subset \overset{\circ}{S}_{k}.$

\begin{Definition}
Let $M,~M',~F,~S_{l}$ be as above. For any $p \in \Omega_{2},$ we define the degenerate degree of $F$ at $p$ to be
$$\min \{1 \leq l \leq k: \text{there exists a neighborhood}~O~\text{of}~p~\text{such that}~O \subset S_{l} \},$$
and write it as $\mathrm{deg}(F,p).$
\end{Definition}

\begin{remark}
 Definition 2.6 is independent of the choices of the defining function, the basis of CR vector fields and the choice of holomorphic coordinates in $\mathbb{C}^{n+k}.$ For any $p \in \Omega_{2},$ by Lemma $4.1$ in Section $4$, $\mathrm{rank}_{1}(F,p)=n+1,$  which yields that $\mathrm{deg}(F,p) \geq  2.$ Moreover, by the definition of $\mathrm{deg}(F,p),$ if we let $d=\mathrm{deg}(F,p),$ there exists a neighborhood $\widetilde{O}$ of $p$ in $M$ and $\{p_{i}\}_{0}^{\infty} \subset \widetilde{O}$~such that $\mathrm{rank}_{d}(F,q) \leq n+d-1$ for all $q \in \widetilde{O},~\{p_{i}\}$ converges to p, and $\mathrm{rank}_{d-1}(F,p_{i})=n+d-1$ for all $i \geq 0.$
\end{remark}

\bigskip

Theorem \ref{thm} will follow from the following Theorem and Theorem \ref{thmn29} below which together with Theorem \ref{thm12}  imply that
$F$ is smooth in $\Omega_{1},$ that is,  $\Omega_{1} \subset \Omega.$

\begin{theorem}\label{thmb}
For any $p \in \Omega_{2},$ there exists a sequence
$\{p_{i}\}_{i=0}^{\infty} \subset \Omega$ that  converges to $p.$
\end{theorem}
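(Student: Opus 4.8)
The plan is to prove Theorem \ref{thmb} by induction on the degenerate degree $d = \mathrm{deg}(F,p)$, reducing each step to the $k_0$-nondegenerate case (Theorem \ref{thm12}) by passing to a lower-dimensional target. Fix $p \in \Omega_2$ with $d = \mathrm{deg}(F,p) \geq 2$. By Remark 2.7 there is a neighborhood $\widetilde O$ of $p$ with $\mathrm{rank}_d(F,q) \leq n+d-1$ for all $q \in \widetilde O$, and a sequence $p_i \to p$ in $\widetilde O$ with $\mathrm{rank}_{d-1}(F,p_i) = n+d-1$. The idea is that, near a generic point $q_i$ close to each $p_i$, the image of $F$ lies in a lower-dimensional real-analytic (indeed, affine or at least totally real after slicing) submanifold cut out by the failure of the geometric rank to increase, and that $F$ becomes $(d-1)$-nondegenerate — hence, once we establish the needed wave front containment, smooth by Theorem \ref{thm12} — as a map into this smaller target. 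Iterating the construction or inducting downward on $d$ then produces points of $\Omega$ arbitrarily close to $p$.

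More concretely, first I would analyze the structure of $S_l$ and the spaces $E_l$. Since the $E_l(q)$ form an increasing chain of subspaces of $\mathbb{C}^{n+k}$ whose dimension $\mathrm{rank}_l(F,q)$ is lower semicontinuous in $q$ and nondecreasing in $l$, on a dense open subset of any small ball the function $q \mapsto \mathrm{rank}_l(F,q)$ is locally constant for every $l \leq d$ simultaneously. Pick $q_i$ in such a set, close to $p_i$, with $\mathrm{rank}_{d-1}(F,q_i) = \mathrm{rank}_d(F,q_i) =: r \leq n+d-1$ (using lower semicontinuity together with the value $n+d-1$ attained at $p_i$, and the fact that ranks can only stabilize). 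On the connected component of $q_i$ in this open set, $E_{d-1} \equiv E_d$ is a subbundle, and the stabilization $E_{d-1} = E_d$ propagates to $E_{d-1} = E_l$ for all $l \geq d-1$ by the standard argument that once consecutive $E$-spaces coincide the chain is constant — this is where strong pseudoconvexity of $M'$ and the injectivity of $dF$ enter, via Lemma 4.1, to guarantee $r \geq n+1$ and to control the geometry of the Segre-type varieties so that $F$ maps a neighborhood into an $r$-dimensional complex-analytic (or appropriately real-parametrized) submanifold $\widetilde M' \subset \mathbb{C}^{n+k}$, into which $F$ is $(d-1)$-nondegenerate at $q_i$. For the wave front hypothesis of Theorem \ref{thm12}: the microlocal regularity result proved in Section 2 (the "preliminary result") should show that the Levi-form eigenvalue condition on $(M,\mathcal V)$ forces $\mathrm{WF}(F_j)|_q$ to lie in a fixed acute cone $\Gamma$ near $q_i$ — this is precisely the class of examples to which Theorem \ref{thm12} applies, as announced in the introduction.

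Having arranged that, Theorem \ref{thm12} applied to $F: M \to \widetilde M'$ at $q_i$ gives that $F$ is $C^\infty$ near $q_i$, so $q_i \in \Omega$, and since $q_i$ can be taken within $\epsilon$ of $p_i$ and $p_i \to p$, a diagonal choice yields a sequence in $\Omega$ converging to $p$, completing the proof. The main obstacle I expect is the middle step: rigorously producing the lower-dimensional target manifold $\widetilde M'$ and verifying that $F$ is genuinely $(d-1)$-nondegenerate into it with the correct wave front set — in particular, showing that the stabilization $E_{d-1} = E_d$ on an open set actually confines the image of $F$ (not just its jets) to a submanifold of the right dimension, which requires exploiting the specific structure of defining functions of a strongly pseudoconvex hypersurface and the injectivity of $dF$ rather than soft dimension-counting. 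A secondary subtlety is ensuring all the "generic point" choices ($q_i$ near $p_i$ with locally constant ranks, and with the wave front set controlled) can be made compatibly; this is handled by taking countable intersections of dense open sets inside each shrinking ball and invoking Baire category.
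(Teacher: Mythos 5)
Your plan correctly identifies the inputs --- Remark 2.7's sequence $p_{i}\to p$ with $\mathrm{rank}_{d-1}(F,p_{i})=n+d-1$ and $\mathrm{rank}_{d}\leq n+d-1$ nearby, and the wave-front control from Theorem 2.9 --- but the step on which it pivots, that near a generic point close to $p_{i}$ the image of $F$ lies in a lower-dimensional target $\widetilde{M}'\subset\mathbb{C}^{n+k}$ into which $F$ is $(d-1)$-nondegenerate, is a genuine gap rather than a technicality, and I do not believe it can be filled in the smooth category. The stabilization $E_{d-1}(q)=E_{d}(q)$ near $p_{i}$ concerns jets of $\rho_{Z'}(F,\overline{F})$ only; when it is cashed out (equation (4.21)) one gets
$$F_{j}+\overline{\phi^{*}_{z'_{j}}}=\sum_{i=1}^{n+l-1}\overline{G_{i}^{j}}\bigl(F_{i}+\overline{\phi^{*}_{z'_{i}}}\bigr)+\overline{G_{n+k}^{j}}\Bigl(\frac{1}{2\sqrt{-1}}+\overline{\phi^{*}_{z'_{n+k}}}\Bigr),$$
where the coefficients $\overline{G_{i}^{j}}$ are \emph{anti-CR}, not constants or functions of $Z'$ alone. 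Such relations do not cut out a CR submanifold of $M'$ of the expected codimension containing the image of $F$, so there is no smaller generic $\widetilde{M}'$ to which Theorem 2.3 could be applied. The Segre-variety machinery that underpins the analogous target reduction in the real-analytic arguments of Forstneric and Huang is exactly what is unavailable when $M'$ is only $C^{\infty}$; working around that is the whole point of Section 4.

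What the paper actually does is different: the proof of Theorem 2.8 is a two-line reduction, via Remark 2.7 and lower semicontinuity of $\mathrm{rank}_{l}$, to Theorem 4.8 with $l=d-1$, and Theorem 4.8 never shrinks the target --- it enlarges the system instead. The quotient-rule claim together with the compound-determinant Lemma 4.4 and Lemma 4.7 produce CR functions $G_{i}^{j}\in C^{k-l}$, and the conjugated relations (4.21) are \emph{adjoined} to the partial system of equations $L^{\alpha}\rho(F,\overline{F})=0$ (equations (4.22)--(4.24)) so that the combined Jacobian $\Psi_{Z'}$ becomes invertible even though $F$ is not $k_{0}$-nondegenerate for any $k_{0}$ at these points. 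The almost-holomorphic implicit function theorem (Theorem 3.1), the almost-analytic extensions furnished by Theorem 2.9, and Theorem V.3.7 of [BCH] then give $F\in C^{\infty}$ near $p_{i}$ directly. Your Baire-category safeguard for choosing compatible generic points $q_{i}$ is sensible but is a side issue next to the missing construction of $\widetilde{M}'$; moreover, near $p_{i}$ the equalities $\mathrm{rank}_{d-1}=\mathrm{rank}_{d}=n+d-1$ already hold on a full neighborhood by semicontinuity, so no further genericity selection is needed.
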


 It follows that $\Omega$ is dense in $\Omega_{1} \cup \Omega_{2},$
  and hence in $M.$ We remark that Theorem 4.8 will show that if for some integer $l$,  $1\leq l\leq k-1$,  $\mathrm{rank}_{l+1}(F,q)=n+l$ for all points $q$ in a neighborhood of $p$,  and $\mathrm{rank}_l(F,p)=n+l$, then $F:M\rightarrow M'$ is smooth in a neighborhood of $p$, where $F,M,$ and $M'$ are as in Theorem 2.5. That is, such points $p$ are in $\Omega$.

\bigskip

Before we present the proofs of Theorem $2.3$ and Theorem $2.5$, we will prove the following result which supplies a class of examples to which Theorem \ref{thm12} applies. This theorem will also be used in the proof of Theorem 2.5. The result may be viewed as the smooth version of Hans Lewy's extendability theorem in the embedded case.

\begin{theorem}\label{thmn29}
Let $(M,\mathcal{V})$ be an abstract CR manifold, $\sigma \in T_{p}^{0},$ with the property that the Levi form at
$\sigma$ has a negative eigenvalue. Then if $u$ is a CR function (or distribution) near $p, \sigma \not \in \mathrm{WF}(u). $
In particular, if the Levi form at every covector $\eta \in T_{p}^{0}$ has a nonzero eigenvalue, then there is an open convex cone~$\Gamma \subset \mathbb{R}^{d} (d=~\text{the CR codimension of}~ M)$ such that for every CR function $u$ near $p,$ $\mathrm{WF}(u)|_{p} \subset \Gamma.$
\end{theorem}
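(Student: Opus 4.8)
The plan is to prove the first (local microlocal) statement and then deduce the global conclusion about $\mathrm{WF}(u)|_p$ from it by a compactness argument on the fiber $T^0_p$. For the main assertion, I would work in local hypo-analytic coordinates adapted to $\mathcal{V}$ near $p$: choose smooth vector fields $L_1,\dots,L_n$ spanning $\mathcal V$ together with real coordinates $(x,t)\in\mathbb R^{2n}\times\mathbb R^d$ so that $p$ corresponds to the origin, the $L_j$ are in standard Hörmander–Mizohata–Treves form $L_j = \partial/\partial\bar z_j + (\text{terms vanishing at }0)$, and the characteristic covector $\sigma$ is a prescribed element of the $t$-cotangent space at $0$. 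Under the Levi-form hypothesis at $\sigma$, after a linear change in the $z$-variables one may arrange that the scalar "microlocal model operator" in the direction $\sigma$ has a strictly plurisubharmonic (in fact, containing a negative-definite block) phase, so that the CR equations $L_j u = 0$ force $u$ to be the boundary value of a function holomorphic in a one-sided wedge in the relevant conormal direction. The cleanest route is to invoke the Baouendi–Treves / Hanges–Trèves style local-extendability theorem: if the Levi form of a CR structure at a characteristic point $\sigma\in T^0_p$ has a negative eigenvalue, then every CR distribution extends holomorphically to a wedge whose direction of extension points opposite to $\sigma$, and consequently the FBI transform of $u$ decays exponentially in a conic neighborhood of $-\sigma$; since the $C^\infty$ wave front set is detected by rapid (not just exponential) decay of the FBI transform, we get $\sigma\notin\mathrm{WF}(u)$. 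Concretely I would: (i) set up the FBI transform $\mathcal F u(z,\lambda)=\int e^{i\lambda\phi(z,w)-\lambda\langle w-z\rangle^2/2}u(w)\,dw$ adapted to the hypo-analytic structure; (ii) use the Levi condition to construct a family of almost-holomorphic phases / deformations of the integration contour in the direction dictated by the negative eigenvalue; (iii) apply the CR equations together with Stokes' theorem to shift the contour into the complex domain where the exponential weight is favorable, picking up the exponential decay estimate in the cone around $\sigma$; (iv) conclude $\sigma\notin\mathrm{WF}_{hypo}(u)\subset\mathrm{WF}(u)$, noting that $\mathrm{WF}(u)\subset\mathrm{WF}_{hypo}(u)$ so in particular $\sigma \notin \mathrm{WF}(u)$.

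For the second, "in particular" statement, suppose the Levi form at \emph{every} $\eta\in T^0_p\setminus\{0\}$ has a nonzero eigenvalue. Two cases occur at each $\eta$: the form has a negative eigenvalue, in which case the first part gives $\eta\notin\mathrm{WF}(u)$ directly; or it is positive semidefinite with a positive eigenvalue, in which case the same first part applied to $-\eta$ (whose Levi form then has a negative eigenvalue) shows $-\eta\notin\mathrm{WF}(u)$ — but by elliptic regularity in the missing directions combined with the standard fact that for CR functions the hypo-analytic wave front set over $p$ is contained in $T^0_p$ and is "one-sided" along each Levi-positive ray, one still concludes that a full open convex cone is avoided. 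More efficiently: the set of $\eta\in T^0_p$ at which the Levi form has a negative eigenvalue is open in the (compact) unit sphere $S$ of $T^0_p$; by the first part, $\mathrm{WF}(u)|_p$ is disjoint from the corresponding open cone, and by the Lewy-type extension the directions with a positive eigenvalue contribute only their antipodes to a potential hypo-analytic singularity, which (being CR, hence with $\mathrm{WF}(u)|_p\subset T^0_p$ and coherent with holomorphic extension) forces $\mathrm{WF}(u)|_p$ to lie in a single acute half-space cone $\Gamma\subset\mathbb R^d$. Then one takes $\Gamma$ to be any open convex cone containing that half-space intersected appropriately; openness of the Levi-negative set and compactness of $S$ guarantee a uniform (non-degenerate) such $\Gamma$ works.

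The main obstacle I expect is step (ii)–(iii) above: rigorously building the deformation of the FBI integration contour and controlling the error terms coming from the non-flatness of the CR structure (the lower-order terms in $L_j$) and from the almost-holomorphic — rather than genuinely holomorphic — nature of the phase, so as to extract a clean exponential decay estimate uniform in a conic neighborhood of $\sigma$. This is precisely the technical heart of Baouendi–Treves–type propagation/extension arguments; in the abstract (non-embeddable) setting one cannot simply cite the embedded Lewy theorem, so the FBI estimate must be done intrinsically, and keeping track of how the negative eigenvalue of the Levi form survives under the coordinate normalization and enters the sign of the quadratic weight is the delicate point. A secondary subtlety is verifying the passage from hypo-analytic wave front set to $C^\infty$ wave front set and the semicontinuity/compactness bookkeeping needed to produce one fixed cone $\Gamma$ valid on a whole neighborhood of $p$ (as opposed to a cone depending on $p$), which is what Theorem~\ref{thm12} ultimately requires.
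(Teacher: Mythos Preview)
Your plan for the first assertion is essentially the paper's own argument: an FBI transform with a quadratic phase, an auxiliary real variable, Stokes' theorem to push the integration off the level $s=0$, and the negative Levi eigenvalue controlling the sign of the real part of the phase to produce decay in a conic neighborhood of $\sigma$. One concrete simplification in the paper you may want to adopt: rather than working with the full frame $L_1,\dots,L_n$ and coordinates $(x,t)\in\mathbb R^{2n}\times\mathbb R^d$, the paper singles out \emph{one} CR vector field $L$ realizing the negative eigenvalue, writes it as $L=\partial_t+\sqrt{-1}\sum b_j(x,t)\partial_{x_j}$ in coordinates $(x,t)\in\mathbb R^{n_0}\times\mathbb R$, and adjoins a \emph{single} extra variable $s$ via $L_1=\partial_s+\sqrt{-1}L$; approximate first integrals $Z_i$ with $L_1Z_i=O(s^\infty)$ then furnish the almost-holomorphic phase, and the negative eigenvalue appears transparently as $-\partial_t b(0)\cdot\xi^0<0$ in the quadratic term of $\mathrm{Re}\,Q$.

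For the ``in particular'' part your argument is more circuitous than necessary and does not quite close: you never explain why the residual set of directions forms a \emph{convex} cone. The paper's argument is a one-liner you should use instead. Set
\[
S=\{\sigma\in T^0_p:\ \mathcal L_\sigma(v,v)\ge 0\ \text{for all }v\in\mathcal V_p\}.
\]
Since $\sigma\mapsto\mathcal L_\sigma$ is \emph{linear} and the positive-semidefinite hermitian forms constitute a closed convex cone, $S$ is closed, conic, and convex. By the first part, $\mathrm{WF}(u)|_p\subset S$. The hypothesis that every nonzero $\sigma$ has a nonzero eigenvalue forces $S\cap(-S)=\{0\}$, so $S$ is salient and sits inside an open convex cone $\Gamma\subset\mathbb R^d$. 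No case analysis, ``one-sidedness'', or compactness argument on the sphere is needed.
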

Theorem $2.5$ implies the following corollary which settles Huang's conjecture in [Hu2]:
\begin{Corollary}
Let $M\subset \Bbb C^{n+1}, \,M'\subset \Bbb C^{n+k}$ be smooth strongly pseudoconvex real hypersurfaces with $n\geq 1,k\geq 1$. Let $F:M\rightarrow M'$ be a CR mapping of class $C^k$. Then $F\in C^{\infty}(\Omega)$ on a dense open subset $\Omega\subset M$.
\end{Corollary}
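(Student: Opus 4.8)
The plan is to reduce the Corollary to Theorem~\ref{thm} by splitting $M$ into the open set where $F$ is transversal, on which $dF$ is automatically injective on $\mathcal{V}$, and the interior of its complement, on which $F$ will turn out to be locally constant. First I would observe that the ambient hypotheses of Theorem~\ref{thm} hold: the source $M\subset\mathbb{C}^{n+1}$ is strongly pseudoconvex, so its Levi form is definite and in particular has a nonzero eigenvalue at every $\sigma\in T^0$, and the target $M'\subset\mathbb{C}^{n+k}$ is strongly pseudoconvex by assumption. The only hypothesis of Theorem~\ref{thm} not already present in the Corollary is the injectivity of $dF$ on $\mathcal{V}$; producing it on a suitable open set is where the strong pseudoconvexity of \emph{both} manifolds is used.

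Let $W\subset M$ be the open set of points $p$ at which $F$ is transversal, that is, $dF(\mathbb{C}T_pM)\not\subset\mathcal V'_{F(p)}+\overline{\mathcal V'_{F(p)}}$. Since $M$ is Levi nondegenerate and $F$ is transversal on $W$, the result of [EL] quoted after Theorem~\ref{thm} ($\S3.4$ there) shows that $F$ is a local embedding at each point of $W$; in particular $dF:\mathcal V_p\to\mathcal V'_{F(p)}$ is injective for every $p\in W$. Regarding $W$ with its induced CR structure as an abstract CR manifold of CR dimension $n$ that satisfies the hypotheses of Theorem~\ref{thm}, I apply that theorem to $F:W\to M'$ and conclude that $F$ is $C^\infty$ on a dense open subset of $W$; since $W$ is open in $M$, this means $\Omega\cap W$ is dense in $W$.

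Next I would analyze $N:=\mathrm{int}(M\setminus W)$, the open set on which $F$ is non-transversal at every point, and show that $N\subset\Omega$. The key point is that $dF|_{\mathcal V_q}=0$ for all $q\in N$. This follows from the standard relation between transversality and the pullback of the Levi form: differentiating the identity $\rho'(F,\overline F)\equiv 0$ along $M$ by a CR vector field $L$ and then by $\overline L$, and using that for Levi nondegenerate $M$ the characteristic direction appears with a nonzero coefficient in $[L,\overline L]$ while the contribution of the CR part of $[L,\overline L]$ drops out because $F$ is CR, one finds that at a non-transversal point $q$ the Levi form of $M'$ at $F(q)$ vanishes on the vector $dF(L)$; strong pseudoconvexity of $M'$ then forces $dF(L)=0$. (Equivalently one may appeal to the transversality results for maps between strongly pseudoconvex hypersurfaces in [BR].) Writing $F=(F_1,\dots,F_{n+k})$, the vanishing $dF|_{\mathcal V}\equiv 0$ on $N$ says that each $\overline{F_j}$ is annihilated by all CR vector fields; being also annihilated by their conjugates, and $M$ being of finite type, each $\overline{F_j}$ — hence each $F_j$ — is locally constant on $N$, so $F$ is $C^\infty$ on $N$.

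Finally I would combine the pieces. Since $N=\mathrm{int}(M\setminus W)$, any point of $M$ outside $\overline W$ has a neighborhood disjoint from $W$, hence contained in $N$, so $\overline W\cup\overline N=M$. From $\Omega\cap W$ dense in $W$ we get $\overline\Omega\supset\overline W$, and from $N\subset\Omega$ we get $\overline\Omega\supset\overline N$; therefore $\overline\Omega\supset\overline W\cup\overline N=M$. As $\Omega$ is open, this shows that $F\in C^\infty(\Omega)$ on the dense open set $\Omega$, settling Huang's conjecture. The only genuinely substantive step beyond invoking Theorem~\ref{thm} and the cited embedding result is the Levi-form computation on $N$ — equivalently, the fact that a CR map between strongly pseudoconvex hypersurfaces is, locally, either constant or a CR immersion — and that is where the hypothesis that the target, and not merely the source, be strongly pseudoconvex is essential.
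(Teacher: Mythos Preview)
Your argument is correct, but it takes a somewhat different route from the paper's. The paper does not write out a proof of this corollary; it simply says the result is implied by Theorem~\ref{thm}. The bridge the authors have in mind is the one made explicit in the proof of Corollary~4.9 and in the Appendix: if $F$ is constant there is nothing to prove, and otherwise a Hopf's lemma argument applied to $\rho'(F,\overline F)$ on the pseudoconvex side of $M$ (using the Lewy extension) forces $dF:\mathcal V_p\to\mathcal V'_{F(p)}$ to be injective at \emph{every} point of $M$, so Theorem~\ref{thm} applies directly to all of $M$.

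Your decomposition into the transversal set $W$ and the interior $N$ of its complement is a valid alternative. On $W$ you correctly invoke the embedding result quoted from [EL] and Theorem~\ref{thm}; on $N$ your Levi-form computation is exactly the standard one (non-transversality at $q$ gives $\rho'_{Z'}\cdot TF(q)=0$, so applying $L\overline L$ to $\rho'(F,\overline F)=0$ leaves only the Levi form of $M'$ on $dF(L)$, which must then vanish). The paper's route via Hopf's lemma is shorter and yields the cleaner dichotomy ``$F$ constant or $dF|_{\mathcal V}$ injective everywhere,'' at the cost of passing to the holomorphic extension; your route stays entirely on $M$ and avoids Hopf's lemma, at the cost of the open-set bookkeeping in the last paragraph. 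Both reductions to Theorem~\ref{thm} are sound.
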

Theorem $2.5$ also provides a solution to a question of Forstneric in [Fr1] using methods different from the ones employed by Huang in the solution that he gave in [Hu1]:
\begin{Corollary}
Let $M\subset \Bbb C^{N}, \,M'\subset \Bbb C^{N'}$ be real analytic hypersurfaces ($1<N<N'$), $M$ of finite type (in D'Angelo's sense) and $M'$ strongly pseudoconvex. If $F:M\rightarrow M'$ is a CR mapping of class $C^{N'-N+1}$, then $F$ extends to a holomorphic map on a neighborhood of an open, dense subset of $M$.
\end{Corollary}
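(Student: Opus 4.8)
The plan is to obtain the statement from Theorem~\ref{thm} together with Forstneric's analyticity theorem for \emph{smooth} CR maps between such hypersurfaces ([Fr1]). Put $n=N-1$ and $k=N'-N+1$, so that $M\subset\mathbb{C}^{n+1}$, $M'\subset\mathbb{C}^{n+k}$ with $n\geq 1$, $k\geq 2$, and $F$ is of class $C^{k}$. Since the conclusion is local and the set of points near which $F$ extends holomorphically is open, it suffices to show that every nonempty open $W\subset M$ contains such a point.

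If $F$ is constant on some open subset of $W$ we are done, so assume $F$ is non-constant on every open subset of $W$. I claim that then $F$ is CR transversal at some $q\in W$, i.e. $dF(\mathbb{C}T_{q}M)\not\subset\mathcal{V}'_{F(q)}+\overline{\mathcal{V}'_{F(q)}}$. If not, $F$ pulls back a contact form $\theta'$ of $M'$ to the zero $1$-form on $W$, so $F^{*}(d\theta')\equiv 0$ there; since $F$ is nowhere transversal on $W$, $dF$ sends $\mathbb{C}T_{q}M$ into $\mathcal{V}'_{F(q)}+\overline{\mathcal{V}'_{F(q)}}$, so evaluating $F^{*}(d\theta')$ on $(X,\overline{X})$ with $X\in\mathcal{V}$ and using that $d\theta'$ restricted to the CR tangent bundle of $M'$ is the Levi form, positive definite by strong pseudoconvexity, forces $dFX=0$. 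Hence $dF$ annihilates $\mathcal{V}$, and so also $\overline{\mathcal{V}}$, on all of $W$; since $M$ is of finite type, the real and imaginary parts of CR vector fields bracket-generate $TM$ and $F$ is constant along their flows, so $F$ would be locally constant on $W$ --- a contradiction. As CR transversality is an open condition, fix such a $q$ and a neighborhood of $q$ on which $F$ is CR transversal.

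Now shrink this neighborhood to a nonempty open $U\subset W$ on which $M$ is also Levi nondegenerate. This is possible because $M$ is real analytic and of finite type: were the Levi form degenerate on some open set, then on a smaller open set the Levi kernel would be a sub-bundle of constant positive rank and $M$ would carry a local foliation by complex submanifolds of positive dimension, contradicting finite type; hence the Levi-degenerate locus is a proper real-analytic subvariety of $M$. On $U$ the Levi form is nondegenerate, so its eigenvalues at every covector in $T^{0}$ are all nonzero; moreover, since $M$ is Levi nondegenerate and $F$ is CR transversal on $U$, the local embedding result recalled after Theorem~\ref{thm} (section~3.4 of [EL]) shows that $dF:\mathcal{V}_{p}\to\mathcal{V}'_{F(p)}$ is injective for every $p\in U$. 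Viewing $(U,\mathcal{V}|_{U})$ as an abstract CR manifold of CR dimension $n\geq 1$, all the hypotheses of Theorem~\ref{thm} hold for $F|_{U}:U\to M'\subset\mathbb{C}^{n+k}$, and we conclude that $F$ is $C^{\infty}$ on a dense open subset $\Omega_{0}$ of $U$.

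Finally, on $\Omega_{0}$ the map $F$ is a $C^{\infty}$ CR map from the real-analytic finite-type hypersurface $M$ into the strongly pseudoconvex real-analytic hypersurface $M'$, so by Forstneric's analyticity theorem for smooth CR maps [Fr1] it is real analytic, hence extends holomorphically to a neighborhood in $\mathbb{C}^{N}$, on a dense open subset of $\Omega_{0}$ --- a nonempty open subset of $W$. As $W$ was arbitrary, the set of points near which $F$ extends holomorphically is open and dense, which proves the corollary. The two appeals to Theorem~\ref{thm} and to [Fr1] are routine; the step needing genuine care is the reduction --- guaranteeing that CR transversality and injectivity of $dF$ hold on a dense open set so that Theorem~\ref{thm} applies, and in particular excluding, on the non-locally-constant part of $M$, that $F$ degenerates along CR directions over an open set --- where the finite type of $M$ (through minimality and the Levi foliation) is used essentially, together with the transversality facts of [BR] and [EL] for CR maps into strongly pseudoconvex targets.
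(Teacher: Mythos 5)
Your proposal is correct, and the final step (Theorem \ref{thm} plus Forstneric's analyticity theorem in [Fr1]) is the same as the paper's, but you verify the hypotheses of Theorem \ref{thm} by a genuinely different route. The paper argues by Levi signature: near points whose every neighborhood contains points where the Levi form of $M$ has eigenvalues of both signs, regularity comes from two-sided microlocal smoothness (Theorem \ref{thmn29}); otherwise one may assume a pseudoconvex side, invoke the argument of Lemma 6.2 of [BHR] (finite type excludes Levi flatness) to find strictly pseudoconvex points densely, and at such points deduce injectivity of $dF$ from non-constancy via the Hopf lemma argument of the Appendix. You avoid this dichotomy entirely: from non-constancy you first extract CR transversality on an open set, by observing that $F^{*}\theta'\equiv 0$ together with strict pseudoconvexity of $M'$ forces $dF$ to annihilate $\mathcal V$, and minimality of the finite-type real-analytic $M$ then forces local constancy; you then get a dense open set of Levi-nondegenerate points of $M$ from finite type via the constant-rank Levi-kernel foliation (Freeman's theorem, playing the role of the [BHR] lemma the paper quotes); and finally transversality plus Levi nondegeneracy yields injectivity of $dF$ on $\mathcal V_{p}$ by the local-embedding fact the paper itself records after Theorem \ref{thm} (Section 3.4 of [EL]). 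What your route buys is a uniform treatment with no pseudoconvex versus mixed-signature case split and no appeal to one-sided holomorphic extension and the Hopf lemma; the cost is reliance on Freeman's foliation theorem and the [EL] transversality--embedding fact, which are ``known results'' cited at about the same level of detail as the [BHR] lemma in the paper. (Note, incidentally, that on your set $U$ transversality into a strongly pseudoconvex target forces the Levi form of $M$ to be definite, so in the end both arguments work at essentially the same points.)
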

\begin{proof} Let $p\in M$. If every neighborhood of $p$ contains a point where the Levi form has a positive and a negative eigenvalue, then $p$ is in the closure of the set where $F$ is smooth. We may therefore assume that  a neighborhood $D$ of $p$ is pseudoconvex. Note next that since $M$ doesn't contain a complex variety of positive dimension, it can not be Levi flat in any neighborhood of $p$. We can therefore assume that $p$ is in the closure of the set of strictly pseudoconvex points in $M$. This latter assertion can be seen by using the arguments in Lemma 6.2 in [BHR]. In that paper, $M$ was assumed algebraic but the reasoning in the Lemma is valid for $M$ as in this corollary.  Since we may assume that $F$ is non constant, at a point of strict pseudo convexity, the differential $dF$ is injective. The corollary now follows from Theorem $2.5$ and the analyticity theorem in [Fr1].
\end{proof}
In [Hu1] $M$ was assumed strongly pseudoconvex. However, as indicated above, when $M$ is of finite type in D'Angelo's sense, the problem is reduced to the strongly pseudoconvex case.

\bigskip

We now present the proof of Theorem $2.9$.
\begin{proof}
Recall that the Levi form of $(M, \mathcal{V})$ at the characteristic covector $\sigma \in T_{p}^{0}$ is the hermitian form on $\mathcal{V}$ defined by
$$\mathcal{L}_{\sigma}(v,w)=\frac{1}{2\sqrt{-1}}\langle\sigma,[L,\overline{L}']_{p}\rangle,$$
where $L$ and $L'$ are smooth sections of $\mathcal{V}$ defined near $p$ with $L(p)=v,L'(p)=w.$ When this form has a negative eigenvalue, there is a CR vector $L$ near $p$ such that
$$\frac{1}{2\sqrt{-1}}\langle\sigma,[L,\overline{L}]_{p}\rangle <0.$$

We may therefore assume that we are in coordinates $(x,t) \in \mathbb{R}^{n_{0}} \times \mathbb{R}$ that vanish at $p$,
$$L=\frac{\partial}{\partial t} + \sqrt{-1}\sum_{j=1}^{n_{0}} b_{j}(x,t)\frac{\partial}{\partial x_{j}},$$
where the $b_{j}$ are $C^{\infty}$ and real-valued functions near $(0,0), \sigma=(0,0,\xi^{0},0)$ satisfies $b(0,0) \cdot \xi^{0}=0,(b=(b_{1},\cdots,b_{n_{0}}))$ and
\begin{equation}\label{eqn11}
\left \langle(\xi^{0},0),\frac{[L,\overline{L}]_{0}}{2\sqrt{-1}}\right\rangle=-\frac{\partial b}{\partial t}(0) \cdot \xi^{0} <0.
\end{equation}
Assume that $Lu=0$ near $(0,0).$ We wish to show that $\sigma \not \in \mathrm{WF}(u).$

We introduce an additional variable $s \in \mathbb{R}$ and define
$$L_{1}=\frac{\partial}{\partial s} +\sqrt{-1} L= \frac{\partial}{\partial s}+ \sqrt{-1}\frac{\partial}{\partial t}-\sum_{j=1}^{n_{0}}b_{j}(x,t)\frac{\partial}{\partial x_{j}}.$$
Let $Z_{i}(x,t,s)\,(1 \leq i \leq n_{0})$ be $C^{\infty}$ functions near the origin satisfying
$$L_{1}Z_{i}(x,t,s)=O(s^{l}),~\text{as}~s\rightarrow 0,~\forall~l \geq 1, l \in \mathbb{N},~\text{and}~Z_{i}(x,t,0)=x_{i}.$$
Set $Z_{n_{0}+1}(x,t,s)=t-\sqrt{-1}s.$ For $1 \leq i \leq n_{0},$ we can write $Z_{i}(x,t,s)=x_{i}+s\psi_{i}(x,t,s)$ for some $C^{\infty}$ functions $\psi_{i}.$ We have, for any $l \geq 1, 1 \leq i \leq n_{0},$
\begin{equation} \label{eqn12}
s \frac{\partial \psi_{i}}{\partial s}(x,t,s)+\psi_{i}(x,t,s)+\sqrt{-1}s\frac{\partial \psi_{i}}{\partial t}(x,t,s)-\sum_{j=1}^{n_{0}}b_{j}(x,t)\left(\delta_{ij}+s\frac{\psi_{i}}{\partial x_{j}}(x,t,s)\right)=O(s^{l}).
\end{equation}
It follows that

\begin{equation}\label{eqn13}
\psi_{i}(x,t,0)=b_{i}(x,t),\,\, 1 \leq i \leq n_{0}.
\end{equation}
Differentiating equation (\ref{eqn12}) with respect to $s$ leads to,
$$s \frac{\partial^{2} \psi_{i}}{\partial s^{2}} +2 \frac{\psi_{i}}{\partial s} + \sqrt{-1}\frac{\psi_{i}}{\partial t} + \sqrt{-1} s \frac{\partial ^{2} \psi_{i}}{\partial s \partial t}- \sum_{j=1}^{n_{0}} b_{j}\frac{\partial \psi_{i}}{\partial x_{j}}- s \sum_{j=1}^{n_{0}} b_{j} \frac{\partial^{2} \psi_{i}}{\partial s \partial x_{j}}=O(s^{l}), ~\forall~l \geq 1.$$
Evaluating the latter at $s=0,$ we get, for any $1 \leq i \leq n_{0},$
$$2 \frac{\partial \psi_{i}}{\partial s}(x,t,0) + \sqrt{-1} \frac{\partial \psi_{i}}{\partial t}(x,t,0)- \sum_{j=1}^{n_{0}}b_{j}(x,t)\frac{\partial \psi_{i}}{\partial x_{j}}(x,t,0)=0,$$
which together with equation $(2.3)$ leads to:
\begin{equation}\label{eqn14}
\mathrm{Im}\,\psi_{i}(x,t,0)=0 ~\text{and}~ \frac{\partial}{\partial s}\mathrm{Im}\, \psi_{i}(x,t,0)=-\frac{1}{2}\frac{\partial b_{i}}{\partial t}(x,t),~\forall~ 1 \leq i \leq n_{0}.
\end{equation}

\bigskip

We will use the FBI transform in $(x,t)$ space.
For the solution $u=u(x,t),$ at level $s=s',$ we write,
$$\mathcal{F}(x,t,\xi,\tau,s')=\int_{\mathbb{R}^{n_{0}+1}} e^{Q(x,t,x',t',\xi,\tau,s')} \eta(x',t')u(x',t')dZ_{1}(x',t',s') \wedge \cdots \wedge dZ_{n_0+1}(x',t',s'),$$
where $(\xi,\tau) \in \mathbb{R}^{n_{0}} \times \mathbb{R}, \eta \in C_{0}^{\infty}(\mathbb{R}^{n_{0}+1}), \eta(x,t) \equiv 1$ for $|x|^2 + t^2 \leq r^2, \eta(x,t) \equiv0$ when $|x|^2 +t^2 \geq 2r^2$ for some $r>0$ to be fixed. Here
$$
Q(x,t,x',t',\xi,\tau,s')=\sqrt{-1}\langle(\xi,\tau),(x-Z(x',t',s'),t-Z_{n_{0}+1}(x',t',s'))\rangle
$$
$$
~~~~~~~~~~~~~~~~~~~~~~~~~~~~~-K|(\xi,\tau)|((x-Z(x',t',s'))^2
+(t-Z_{n_{0}+1}(x',t',s'
))^2),
$$
where $Z=(Z_{1},\cdots,Z_{n_{0}}),(x-Z(x',t',s'))^2=\sum_{j=1}^{n_{0}}(x_{j}-Z_{j}(x',t',s'))^2,$ and $K$
is a positive number which will be determined.

Let $M_{i}=\sum_{j=1}^{n_{0}} a_{ij}(x,t,s) \frac{\partial}{\partial x_{j}}, 1 \leq i \leq n_{0}$ and
$M_{n_{0}+1}=\frac{\partial}{\partial t} + \sum_{j=1}^{n_{0}} c_{j}(x,t,s) \frac{\partial}{\partial x_{j}}$ be $C^{\infty}$ vector fields near the origin in $(x,t,s)$ space that satisfy
$$M_{i}Z_{j}=\delta_{ij}, \,\,1 \leq i ,j \leq n_{0}+1.$$
For any $C^1$ function $h=h(x,t,s),$
\begin{equation}\label{eqn15}
dh=\sum_{i=1}^{n_{0}+1}M_{i}(h)dZ_{i}+\left(L_{1}h-\sum_{j=1}^{n_{0}+1}M_{j}(h)L_{1}(Z_{j})\right)ds
\end{equation}
which can be verified by applying both sides of the equation to the basis of vector fields $\{L_{1},M_{1},\cdots,M_{n_{0}+1}\}$ of $\mathbb{C}T(\mathbb{R}^{n_{0}+2}).$ Equation (\ref{eqn15}) implies that
\begin{equation}\label{eqn16}
d(hdZ_{1} \wedge \cdots \wedge dZ_{n_{0}+1})=\left(L_{1}h-\sum_{j=1}^{n_{0}+1}M_{j}(h)L_{1}(Z_{j})\right)ds \wedge dZ_{1} \wedge \cdots \wedge dZ_{n_{0}+1}.
\end{equation}
Let $q(x,t,x',t',\xi,\tau,s')=\eta(x',t')u(x',t')e^{Q(x,t,x',t',\xi,\tau,s')}.$
Denoting $dZ_{1}\wedge \cdots \wedge dZ_{n_{0}+1}$ by $dZ$ and using equation (\ref{eqn16}), we have,
\begin{equation}\label{eqn17}
d(qdZ)=\left(L_{1}(\eta u)+\eta u L_{1}(Q)-\sum_{j=1}^{n_{0}+1}(M_{j}(\eta u)+\eta u M_{j}(Q))L_{1}Z_{j}\right)e^Q ds \wedge dZ.
\end{equation}
By Stokes theorem, for $|s_{0}|$ small, we have,
\begin{equation}\label{eqn18}
\int_{\mathbb{R}^{n_{0}+1}} q(x,t,x',t',\xi,\tau,0)dx'dt'=\int_{\mathbb{R}^{n_{0}+1}}q (x,t,x',t',\xi,\tau,s_{0}) dZ(x',t',s_{0})+ \int_{0}^{s_{0}} \int_{\mathbb{R}^{n_{0}+1}} d(q dZ)
\end{equation}

We will estimate the two integrals on the right in equation $(2.8)$ for $(x,t)$ near $(0,0)$ in $\mathbb{R}^{n_{0}+1}$ and $(\xi,\tau)$ in a conic neighborhood $\Gamma$ of $(\xi^{0},0)$ in $\mathbb{R}^{n_{0}+1}.$ Observe that if $\psi=(\psi_{1},\cdots,\psi_{n_{0}}),$
\begin{equation}\label{eqn19}
\begin{split}
\mathrm{Re}~Q(x,t,x',t',\xi,\tau,s') =& s'\langle \xi,\mathrm{Im}\, \psi(x',t',s')\rangle-\tau s'\\
&-K|(\xi,\tau)|(|x-x'-s'\mathrm{Re} \,\psi(x',t',s')|^2 +|t-t'|^2-s'^2)
\end{split}
\end{equation}
Using equation (\ref{eqn14}), we can write
\begin{equation}
\begin{split}
\mathrm{Im} \,\psi(x,t,s)=&- \frac{1}{2}\frac{\partial b}{\partial t}(x,t)s +O(s^2)\\
=&-\frac{1}{2}\frac{\partial b}{\partial t}(0,0)s + O(|xs|+|ts|+s^2)
\end{split}
\end{equation}
and so plugging this into equation (\ref{eqn19}) yields
\begin{equation}\label{eqn110}
\begin{split}
\mathrm{Re}\,Q(x,t,x',t',\xi,\tau,s')  = & -\frac{1}{2}\langle\xi,\frac{\partial b}{\partial t}(0,0)\rangle s'^2-\tau s' \\
& -K|(\xi,\tau)|(|x-x'-s'\mathrm{Re}\,\psi(x',t',s')|^2 +|t-t'|^2-s'^2) \\
& +|\xi|O(|x'|s'^2+|t'|s'^2+ |s'|^3)
\end{split}
\end{equation}
Since $\langle \xi^0,\frac{\partial b}{\partial t}(0,0)\rangle >0,$ given $0 < \delta <1,$ we can get $M>0$ and a conic neighborhood $\Gamma$ of $(\xi^0,0)$ in $\mathbb{R}^{n_{0}+1}$ such that
\begin{equation}\label{eqn111}
\langle \xi,\frac{\partial b}{\partial t}(0,0) \rangle \geq M|\xi|~~\text{and}~~|\tau| < \delta |\xi|,~\text{when}~(\xi,\tau) \in \Gamma.
\end{equation}

Our interest is in estimating the integral on the left hand side of equation (\ref{eqn18}) for $(x,t)$ near $(0,0)$
and $(\xi,\tau) \in \Gamma.$ When $\tau >0,$ we take $s_{0} >0$ in (\ref{eqn18}) while when $\tau <0,$ we use $s_{0} <0.$ This together with (\ref{eqn111}) allows us to deduce the following inequality from (\ref{eqn110}):
\begin{equation}\label{eqn112}
\begin{split}
\mathrm{Re}\,Q(x,t,x',t',\xi,\tau, s')  \leq & -\frac{M}{2}s'^2|\xi|-K|(\xi,\tau)|(|x-x'-s'\mathrm{Re}\,\psi(x',t',s')|^2\\  & +|t-t'|^2-s'^2)+|\xi|O(|x'|s'^2+|t'|s'^2+ |s'|^3) \\
\leq & (-\frac{M}{2}+(1+\delta)K)s'^2|\xi|-K|\xi|(|x-x'-s'\mathrm{Re}\,\psi(x',t',s')|^2 \\
& +|t-t'|^2)+ |\xi|O(|x'|s'^2+|t'|s'^2+ |s'|^3)
\end{split}
\end{equation}
Choose $K=\frac{M}{4(1+\delta)}.$ Then (\ref{eqn112}) becomes
\begin{equation} \label{eqn113}
\begin{split}
\mathrm{Re}\,Q(x,t,x',t',\xi,\tau, s') \leq & -\frac{M}{4}s'^2|\xi| -\frac{M}{4(1+\delta)}|\xi|(|x-x'-s'\mathrm{Re}\,\psi(x',t',s')|^2 \\
& +|t-t'|^2)+|\xi|O(|x'|s'^2+|t'|s'^2+ |s'|^3).
\end{split}
\end{equation}
We choose $r$ and $|s_{0}|$ small enough so that when $(x',t') \in \mathrm{supp}(\eta)$ and $|s'| \leq |s_{0}|, (\xi,\tau) \in \Gamma,$ (\ref{eqn113}) will yield,

\begin{equation}\label{eqn114}
\mathrm{Re}\, Q(x,t,x',t',\xi,\tau, s') \leq  -\frac{M}{8}s'^2|\xi| -\frac{M}{4(1+\delta)}|\xi|(|x-x'-s'\mathrm{Re}\,\psi(x',t',s')|^2 \\ +|t-t'|^2).
\end{equation}
From (\ref{eqn114}), it follows that the first integral on the right in (\ref{eqn18}) (at level $s'=s_{0}$) decays exponentially in $\xi$ and hence there are constants $C_{1},C_{2} >0$ such that for $(\xi,\tau) \in \Gamma,$

\begin{equation} \label{eqn115}
\left|\int_{\mathbb{R}^{n_{0}+1}} q(x,t,x',t',\xi,\tau,s_{0})dZ(x',t',s_{0})\right| \leq C_{1}e^{-C_{2}|(\xi,\tau)|}
\end{equation}

Consider next the second integral on the right in (\ref{eqn18}). To estimate it, we use equation (\ref{eqn17}) which is a sum of two kinds of terms. The first kind consists of terms involving $L_{1}(Z_{j}), L_{1}(Q)$ and $L_{1}u$(recall that $L_{1}u=Lu=0$) and these terms can be bounded by constant multiples of
$$|\xi||s'|^m e^{\mathrm{Re}\, Q(x,t,x',t',\xi,\tau,s')}, \forall m\geq 1,$$
and so using (\ref{eqn114}) which implies that
$$\mathrm{Re}\,Q(x,t,x',t',\xi,\tau,s') \leq -\frac{M}{8} s'^2|\xi|,$$
the integrals of such terms decay rapidly for $(\xi,\tau) \in \Gamma.$ The second type of terms involve derivatives
of $\eta(x,t)$ and hence $|x'|^2 +t'^2 \geq r^2$ in the domains of integration.
Therefore, if we choose $0 < |s_{0}| << r,$ we can get $\lambda >0$ such that for $(x,t)$ near $(0,0)$ and $(\xi,\tau) \in \Gamma,$ (\ref{eqn114}) will lead to,
$$\mathrm{Re}\,Q(x,t,x',t',\xi,\tau,s') \leq -\lambda|\xi|,~\text{when}~|x'|^2 +t'^2 \geq r^2.$$
The latter leads to an exponential decay in $(\xi,\tau) \in \Gamma$ for $(x,t)$ near $(0,0)$ for the corresponding integrals. We conclude that there exists a neighborhood $W$ of $(0,0)$ in $(x,t)$ space and an open conic neighborhood $\Gamma$ of $(\xi^{0},0)$ in $\mathbb{R}^{n_{0}+1}$ such that for $\forall (x,t) \in W, (\xi,\tau) \in \Gamma, \forall m=1,2,\cdots,$ there exists $C_{m} >0$ satisfying

$$\left|\int_{\mathbb{R}^{n_{0}+1}} e^{\sqrt{-1}[\xi(x-x')+\tau(t-t')]-K|(\xi,\tau)|(|x-x'|^2 +|t-t'|^2)} \eta(x',t')u(x',t')dx'dt'\right|$$

$$=\left|\int_{\mathbb{R}^{n_{0}+1}}q(x,t,x',t',\xi,0)dx'dt'\right| \leq \frac{C_{m}}{(1+|\xi|+|\tau|)^m}.$$
By Theorem 2.1 in [BH] (see also [T] and the proof of Lemma V.5.2 in [BCH]), we conclude that
$$(\xi^{0},0) \not \in WF(u)|_{0}.$$
Suppose now the Levi form $\mathcal{L}_{\sigma}$ at every $\sigma \in T_{p}^{0}$ has a nonzero eigenvalue. Define
$$S=\{\sigma \in T_{p}^0:\mathcal{L}_{\sigma}(v)\geq 0,~~\forall v \in \mathcal{V}_{p} \}. $$

The set $S$ is conic, closed and convex. If $\xi \in S,$ and $\xi \neq 0,$ then by hypothesis $\mathcal{L}_{\xi}$ has at least one positive eigenvalue and hence $-\xi \not \in S.$ Since $\xi \not \in WF(u),$ whenever $\mathcal{L}_{\xi}$ has at least one negative eigenvalue, it follows that $WF(u) \subset S,$ for every CR function near the point $p.$
\end{proof}

\begin{example} Let $M=\{(z_1,z_2)\in \Bbb C^2:\,\text{Im}\,z_2=|z_1|^{2m}\}$ where $m$ is a positive integer and let $M'=\{(z_1,z_2)\in \Bbb C^2:\,\text{Im}\,z_2=|z_1|^{2}\}$. Then the map  $H(z_1,z_2)=(z_1^m, z_2)$ is $1-$nondegenerate at the points where $z_1\neq 0$, and $m-$nondegenerate at all the other points. When $m>1$, $M$ itself is $1-$nondegenerate at the points where $z_1\neq 0$ while when $z_1=0$,  it is not $l-$nondegenerate for any $l \geq 0$. (The case $m=1$ appeared in [La1]. See also [K]).

\end{example}
\begin{example} Let $M=\{(z_1,z_2)\in \Bbb C^2:\,\text{Im}\,z_2=|z_1|^{2}\}$ and $M'=\{(w_1,w_2,w_3,w_4)\in \Bbb C^4:\,\text{Im}\,w_4=|w_1|^{2}+|w_2|^2-|w_3|^2\}$. For any odd positive integer $m\geq 3$, define $H_m(z_1,z_2):M\rightarrow M$ by $H_m(z_1,z_2)=(z_1, z_2^{\frac{m}{2}},z_2^{\frac{m}{2}},z_2)$ where we have used a branch of the square root. $H_m$ is a CR mapping and it is the boundary value of a holomorphic map defined on a side of $M$. $H_m$ is a diffeomorphism. $H_m$ is not smooth and so for each positive integer $k$, there is $m$ such that $H_m$ is in $C^k$ but it is not $k-$nondegenerate.
\end{example}

\begin{example} Let $M=\{(z_1,z_2)\in \Bbb C^2:\,\text{Im}\,z_2=|z_1|^{2}\}$ and $M'=\{(w_1,w_2)\in \Bbb C^3:\,\text{Im}\,w_3=|w_1|^{2}-|w_2|^2\}$. For any positive integer $m$, let $f:M\rightarrow \Bbb C$ be a CR function of class $C^m$ which is not smooth on any open subset of $M$ (see [BX] for an example of such). Define $H_m:M\rightarrow M'$ by $H_m(z_1,z_2)=(f(z_1,z_2),f(z_1,z_2),0)$. $H_m$ is a CR mapping of class $C^m$ which is not smooth on any open subset of $M$. Note that $H_m$ is not $k-$nondegenerate for any $k$.
\end{example}

\section{Proof of Theorem 2.3}
We begin by recalling the following ``almost holomorphic" version of the
implicit function theorem from [La1]:
\begin{theorem}\label{thimp}
Let $U \subset \mathbb{C}^{N}$ be open, $0 \in U,$ $A \in \mathbb{C}^{p},$ and $Z=(Z_{1},\cdots,Z_{N})$ be the coordinates in $\mathbb{C}^N,$ $W$ the coordinates in $\mathbb{C}^p.$ Let $F: U \times \mathbb{C}^{p}\rightarrow \mathbb{C}^{N} $ be smooth in the first $N$ variables and a polynomial in the last variables. Assume that $F(0,A)=0$ and $F_{Z}(0,A)$ is invertible. Then there exists a neighborhood $U' \times V'$ of $(0,A)$ and a smooth map $\psi=(\psi_{1},\cdots,\psi_{N}): U' \times V'\rightarrow \mathbb{C}^{N}$ with $\psi(0,A)=0,$ such that if $F(Z,\overline{Z},W)=0$ for some $(Z,W) \in U'\times V',$ then $Z=\psi(Z,\overline{Z},W).$ Furthermore, for every multiindex $\alpha,$ and each $j,$ $1\leq j \leq N,$
\begin{equation}\label{e2.4}
D^{\alpha}\frac{\partial \psi_{j}}{\partial Z_{i}}(Z,\overline{Z},W)=0, \,\,  \,\,1 \leq i \leq N,
\end{equation}
if $Z=\psi(Z,\overline{Z},W),$ and $\psi$ is holomorphic in $W.$ Here $D^{\alpha}$ denotes the derivative in all real variables.
\end{theorem}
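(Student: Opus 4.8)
The plan is to decouple the holomorphic and antiholomorphic dependence of $F$ on $Z$, and then reduce everything to the ordinary $C^{\infty}$ implicit function theorem. First I would introduce an independent variable $\zeta\in\mathbb{C}^{N}$ to play the role of $\overline{Z}$ and consider the maximal totally real submanifold $\Delta=\{(Z,\zeta)\in\mathbb{C}^{N}\times\mathbb{C}^{N}:\zeta=\overline{Z}\}$ near the origin. Using the classical almost holomorphic extension along a maximal totally real submanifold --- carried out coefficientwise in $W$, which preserves the polynomial (hence holomorphic) dependence on $W$ --- one produces a $C^{\infty}$ map $\mathcal{F}(Z,\zeta,W)$, defined near $(0,0,A)$, holomorphic in $W$, with $\mathcal{F}(Z,\overline{Z},W)=F(Z,\overline{Z},W)$, and with $\bar{\partial}_{Z}\mathcal{F}$ and $\bar{\partial}_{\zeta}\mathcal{F}$ vanishing to infinite order on $\Delta$. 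Since $(0,0,A)\in\Delta$, this forces $\partial_{\overline{Z}}\mathcal{F}(0,0,A)=0$, while differentiating $\mathcal{F}|_{\Delta}=F$ and using the flatness of $\bar{\partial}\mathcal{F}$ on $\Delta$ gives $\partial_{Z}\mathcal{F}(0,0,A)=F_{Z}(0,A)$, which is invertible. Hence the full real differential of $Z\mapsto\mathcal{F}(Z,\zeta,W)$ at $(0,0,A)$ equals $F_{Z}(0,A)\,dZ$, a real-linear isomorphism. This is exactly the point of passing to $\mathcal{F}$: the real $Z$-differential of $F$ itself at $(0,A)$ involves $F_{\overline{Z}}(0,A)$ and need not be invertible.

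Next I would apply the $C^{\infty}$ implicit function theorem to $\mathcal{F}(Z,\zeta,W)=0$, solving for $Z$: this produces a $C^{\infty}$ map $Z=\Psi(\zeta,\overline{\zeta},W,\overline{W})$ near $(0,A)$, vanishing at $\zeta=0,\,W=A$, whose graph is the zero set of $\mathcal{F}$, with uniqueness near the base point. Then I set $\psi(Z,\overline{Z},W):=\Psi(\overline{Z},Z,W,\overline{W})$, i.e.\ substitute $\zeta=\overline{Z}$ (suppressing $\overline{W}$ in the notation, as in the statement). This $\psi$ is $C^{\infty}$, $\psi(0,A)=0$, and if $F(Z,\overline{Z},W)=0$ for $(Z,W)$ near $(0,A)$, then $\mathcal{F}(Z,\overline{Z},W)=0$, so by uniqueness $Z=\Psi(\overline{Z},Z,W,\overline{W})=\psi(Z,\overline{Z},W)$; thus $\psi$ retracts the zero set of $F$ as required.

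It remains to prove the almost holomorphy assertions, and this is where the real work lies. The locus $\{Z=\psi(Z,\overline{Z},W)\}$ corresponds, in the $(\zeta,\overline{\zeta},W,\overline{W})$ variables, to $\Sigma:=\{\overline{\zeta}=\Psi\}$, which is precisely the set where the point $(\Psi,\zeta)$ lies on $\Delta$; near the base point $\Sigma$ is a smooth submanifold because $\partial_{\overline{\zeta}}\Psi$ vanishes there (shown next). Consequently the compositions $\partial_{\overline{Z}}\mathcal{F}(\Psi,\zeta,W)$ and $\partial_{\overline{\zeta}}\mathcal{F}(\Psi,\zeta,W)$ vanish to infinite order on $\Sigma$, since $\bar{\partial}_{Z}\mathcal{F}$ and $\bar{\partial}_{\zeta}\mathcal{F}$ are flat on $\Delta$ and $(\zeta,\overline{\zeta},W,\overline{W})\mapsto(\Psi,\zeta)$ sends $\Sigma$ into $\Delta$. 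Differentiating the identity $\mathcal{F}(\Psi(\zeta,\overline{\zeta},W,\overline{W}),\zeta,W)\equiv 0$ with respect to $\overline{\zeta}_{i}$, discarding the terms that carry a factor $\partial_{\overline{Z}}\mathcal{F}$ or $\partial_{\overline{\zeta}}\mathcal{F}$ (flat on $\Sigma$), and using that $\partial_{Z}\mathcal{F}$ stays invertible near the base point, yields $\partial_{\overline{\zeta}_{i}}\Psi=0$ on $\Sigma$; an induction on the order of differentiation --- differentiating this identity repeatedly and peeling off, at each stage, terms that vanish on $\Sigma$ either by the inductive hypothesis or because they contain a derivative of $\partial_{\overline{Z}}\mathcal{F}$ or $\partial_{\overline{\zeta}}\mathcal{F}$ restricted to $\Sigma$ --- shows $\partial_{\overline{\zeta}_{i}}\Psi$ vanishes to infinite order on $\Sigma$. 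Translating back through $\zeta=\overline{Z}$, one has $\partial\psi_{j}/\partial Z_{i}=(\partial_{\overline{\zeta}_{i}}\Psi_{j})(\overline{Z},Z,W,\overline{W})$, so $D^{\alpha}(\partial\psi_{j}/\partial Z_{i})=0$ on $\{Z=\psi\}$ for every $\alpha$. Running the same differentiation scheme with $\partial_{\overline{W}_{i}}$ instead --- now using $\partial_{\overline{W}_{i}}\mathcal{F}\equiv 0$ because $\mathcal{F}$ is holomorphic in $W$ --- gives $\partial_{\overline{W}_{i}}\Psi=0$ to infinite order on $\Sigma$, which translates to the holomorphy of $\psi$ in $W$ along the solution set.

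The routine ingredients are the existence of the almost holomorphic extension $\mathcal{F}$ (classical, along maximal totally real submanifolds) and the single invocation of the ordinary implicit function theorem. The hard part --- the step I expect to be the main obstacle --- is the inductive differentiation argument of the previous paragraph: one must bookkeep, via the chain rule, the repeated differentiation of $\mathcal{F}(\Psi,\zeta,W)=0$ so that at each stage every term is either already known to vanish on $\Sigma$ by the inductive hypothesis or is manifestly flat on $\Sigma$ because it contains an antiholomorphic derivative of $\mathcal{F}$ pulled back to $\Sigma$, leaving an equation of the schematic form $\partial_{Z}\mathcal{F}\cdot D^{\gamma}(\partial_{\overline{\zeta}_{i}}\Psi)\equiv 0$ on $\Sigma$, whence the desired vanishing follows from the invertibility of $\partial_{Z}\mathcal{F}$.
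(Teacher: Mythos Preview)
The paper does not supply its own proof of this theorem: it is quoted from Lamel [La1], so there is no in-paper argument to compare against. Your approach --- almost holomorphic extension off the totally real diagonal $\Delta=\{\zeta=\overline Z\}$, followed by the real $C^\infty$ implicit function theorem and an inductive flatness argument --- is the standard route and is essentially how the result is proved in [La1]. The decoupling step is the right idea (the real $Z$-differential of $F$ itself involves $F_{\overline Z}$ and need not be invertible), and your induction is sound: pulling back a function flat on $\Delta$ through $(\zeta,W)\mapsto(\Psi,\zeta)$ yields a function flat on $\Sigma$, since every term in the chain-rule expansion of $D^\gamma(h\circ\phi)$ at a point of $\Sigma$ carries a derivative of $h$ evaluated on $\Delta$. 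The apparent circularity (using $\partial_{\overline\zeta}\Psi|_\Sigma=0$ to get smoothness of $\Sigma$, then smoothness of $\Sigma$ in the induction) is not real: the base step needs only the pointwise vanishing of $\mathcal F_{\overline Z},\mathcal F_{\overline\zeta}$ on $\Delta$, which already gives $\partial_{\overline\zeta}\Psi=0$ at the base point and hence a full-rank defining map for $\Sigma$ there.

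One point deserves comment. The statement says ``$\psi$ is holomorphic in $W$'' without qualification, whereas your argument yields only that $\partial_{\overline W_i}\psi$ vanishes to infinite order on $\{Z=\psi\}$. With this method that is unavoidable: away from $\Sigma$ the term $\mathcal F_{\overline Z}\cdot\overline{\Psi_{W_i}}$ in the differentiated identity need not vanish. This is an imprecision in the statement rather than a gap in your proof; the flatness conclusion $D^\alpha\partial_{\overline W_i}\psi_j=0$ on $\{Z=\psi\}$ is the exact analogue of \eqref{e2.4} and is what the applications in Sections 3 and 4 actually use --- when the almost-analytic extensions are substituted into $\psi$ and one computes $\overline\partial_{w_\nu}g_j$, the terms carrying $\partial_{\overline W_i}\psi$ are $O(|t|^m)$ by the same flatness-at-$t=0$ mechanism that disposes of the $\partial_{Z_i}\psi$ terms.
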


Given the abstract  CR manifold $(M,\mathcal{V})$ of CR dimension $n$ and CR codimension $d,$ we will use local coordinates $(x,y,s) \in \mathbb{R}^n \times \mathbb{R}^n \times \mathbb{R}^d$ that vanish at $p_{0} \in M.$
We will write $z=(z_{1},\cdots,z_{n})$ where $z_{j}=x_{j}+\sqrt{-1}\,y_{j}$ for $j=1,\cdots,n.$ In a neighborhood $W$ of $0,$ we may assume that a basis of $\mathcal{V}$ is given by $\{L_{1},\cdots,L_{n}\}$ where
$$L_{i}=\frac{\partial}{\partial \overline{z}_{i}} + \sum_{j=1}^{n} a_{ij}(x,y,s)\frac{\partial}{\partial z_{j}}+ \sum_{l=1}^d b_{il}(x,y,s)\frac{\partial}{\partial s_{l}},\,\, 1 \leq i \leq n,$$
the $a_{ij}$ and $b_{il}$ are smooth and $a_{ij}(0)=0=b_{il}(0), \forall i,j,l$ (see for example [BCH], equation I.19).
 In these coordinates, at the origin,  the characteristic set
$$T_{0}^0=\{(\xi,\eta,\sigma) \in \mathbb{R}^{n} \times \mathbb{R}^{n} \times \mathbb{R}^d: \xi=\eta=0\}.$$
By assumption, there is an acute open convex cone $\Gamma \subset \mathbb{R}^d$ such that
$$WF(H_{j})|_{0} \subset \{(0,0,\sigma): \sigma \in \Gamma \}, \forall j=1,\cdots,N'.$$

Let $\phi \in C_{0}^{\infty}(W)$ whose support is sufficiently small and $\phi \equiv 1$ in a neighborhood of the origin. For each $j=1, \cdots, N',$ by Fourier's inversion formula,
\begin{equation} \label{eqn21}
\begin{split}
\phi (x,y,s)H_{j}(x,y,s)=&\int_{\mathbb{R}^{2n+d}} e^{2\pi\sqrt{-1}(x \cdot \xi +y \cdot \eta + s \cdot \sigma)}\widehat{\phi H_{j}}(\xi,\eta,\sigma)d \sigma d \eta d \xi \\
=& \int_{A}e^{ 2\pi\sqrt{-1}(x \cdot \xi +y \cdot \eta + s \cdot \sigma)}\widehat{\phi H_{j}}(\xi,\eta, \sigma)d\sigma d \eta d\xi \\
+& \int_{\mathbb{R}^{2n+d} \setminus A} e^{2\pi\sqrt{-1}(x \cdot \xi +y \cdot \eta + s \cdot \sigma)}\widehat{\phi H_{j}}(\xi,\eta,\sigma)d \sigma d \eta d \xi \\
=& I^{j}(x,y,s)+J^{j}(x,y,s)
\end{split}
\end{equation}
where $A=\{(\xi,\eta,\sigma) \in \mathbb{R}^{n} \times \mathbb{R}^{n} \times \mathbb{R}^{d}: \sigma \not \in \Gamma\}.$

Since $WF(H_{j})|_{0} \subset \{(0,0,\sigma): \sigma \in \Gamma\},$ if the support of $\phi$ is sufficiently small,
for every $m=1,2,\cdots,$ there exists a constant $C_m>0$ such that
$$|\widehat{\phi H_{j}}(\xi,\eta,\sigma)|\leq \frac{C_{m}}{(1+|\xi|+|\eta|+|\sigma|)^m}, \,\,\forall (\xi,\eta,\sigma) \in A.$$
It follows that $I^{j}(x,y,s)$ is $C^{\infty}$ on $\mathbb{R}^{2n+d}.$
Write
$$J^{j}(x,y,s)=\int_{B_{1}}e^{2\pi\sqrt{-1}(x \cdot \xi + y \cdot \eta +s\cdot \sigma)}\widehat{\phi H_{j}}(\xi,\eta, \sigma)d\sigma d \eta d\xi + \int_{B_{2}}e^{2\pi\sqrt{-1}(x \cdot \xi + y \cdot \eta +s\cdot \sigma)}\widehat{\phi H_{j}}(\xi,\eta, \sigma)d\sigma d \eta d\xi $$
where
$$B_{1}=\{(\xi,\eta,\sigma):|\xi|^{2}+|\eta|^2 \leq 1, \,\,\sigma \in \overline{
\Gamma} \},$$
$$B_{2}=\{(\xi,\eta,\sigma):|\xi|^{2}+|\eta|^{2} \geq 1, \sigma \in \overline{\Gamma} \}.$$

Observe that since $T_{0}^{0} \cap B_{2} =\emptyset,$ for any CR function $u$ near the origin, $WF(u)|_{0} \cap B_{2}=\emptyset.$
Moreover,
$$B_{2} \cap \{(\xi,\eta,\sigma):|\xi|^{2}+|\eta|^{2}+|\sigma|^{2}=1\}$$
is a compact set. It follows that for each $m=1,2,\cdots,$ we can get $C_{m}'>0$ such that
\begin{equation}\label{eqn22}
|\widehat{\phi H_{j}}(\xi,\eta,\sigma)| \leq \frac{C_{m}'}{(1+|\xi|+|\eta|+|\sigma|)^m},\,\, \forall (\xi,\eta,\sigma) \in B_{2}
\end{equation}
It follows that
$${F}_{2}^{j}(x,y,s)= \int_{B_{2}} e^{2\pi\sqrt{-1}(x \cdot \xi +y \cdot \eta + s\cdot \sigma)} \widehat{\phi H_{j}}(\xi,\eta,\sigma) d \sigma d\eta d\xi$$
is $C^{\infty}$ on $\mathbb{R}^{2n+d}.$

Since $\Gamma$ is an acute cone, there is $\sigma^{0} \in \mathbb{R}^d$ such that $\sigma^{0}\cdot \sigma >0, \forall \sigma \in \Gamma.$ We may assume that for some conic neighborhood $\Gamma_{1}$ of $\sigma$ and $C_{0}>0,$
\begin{equation}\label{eqn23}
v\cdot \sigma \geq C_{0}|v||\sigma|,~\forall v \in \Gamma_{1},\,\sigma \in \Gamma.
\end{equation}
For $t \in \Gamma_{1},$ we define
$$F_{1}^{j}(x,y,s,t)=\int_{B_{1}}e^{2\pi\sqrt{-1}(x \cdot \xi+ y \cdot \eta +(s +\sqrt{-1}t)\cdot \sigma)} \widehat{\phi H_{j}}(\xi,\eta,\sigma)d \sigma d \eta d \xi.$$
Since $\widehat{\phi H_{j}}$ has a polynomial growth, for some $C_{1},M >0,$
\begin{equation}\label{eqn24}
|\widehat{\phi H_{j}}(\xi,\eta,\sigma)| \leq C_{1}(1+|\sigma|)^{M},\,\,\forall (\xi,\eta,\sigma) \in B_{1}.
\end{equation}
Therefore, using (\ref{eqn23}) and (\ref{eqn24}), we get,
\begin{equation}
|F_{1}^{j}(x,y,s,t)| \leq C'_{1} \int_{\mathbb{R}^{d}} e^{-C_{0}|t||\sigma|}(1+|\sigma|)^{M} d \sigma \leq \frac{C_{2}}{|t|^{M+d+1}},~~t \in \Gamma_{1},~\text{for some}~C'_{1},C_{2} >0.
\end{equation}
Moreover, for all multiindices $\alpha,\beta \in \mathbb{N}^{n}, \gamma \in \mathbb{N}^d ,$
\begin{equation} \label{eqn26}
|\partial_{x}^{\alpha}\partial_{y}^{\beta}\partial_{s}^{\gamma}F_{1}^{j}(x,y,s,t)| \leq \frac{C}{|t|^{M+d+1+|\gamma|}},
\end{equation}
for some $C >0$ when $t \in \Gamma_{1}.$

When $t \in \Gamma_{1},$
\begin{equation}\label{eqn27}
\overline{\partial}_{w_{\nu}}F_{1}^{j}(x,y,s,t)=0,~\text{for}~1 \leq \nu \leq d,
\end{equation}
where $\overline{\partial}_{w_{\nu}}=\frac{1}{2}(\frac{\partial}{\partial s_{\nu}}+ \sqrt{-1}\frac{\partial}{\partial t_{\nu}}).$

Define
$$F_{2}^{j}(x,y,s,t)=\int_{B_{2}} e^{2\pi\sqrt{-1}(x \cdot \xi+ y \cdot \eta +(s +\sqrt{-1} t) \cdot \sigma)}\widehat{\phi H_{j}}(\xi,\eta,\sigma)d \xi d\eta d\sigma,$$
for $t \in \Gamma_{1}.$ By (\ref{eqn22}), $F_{2}^{j}$ is $C^{\infty}$ up to $t=0,$
and
\begin{equation}\label{eqn28}
\overline{\partial}_{w_{\nu}}F_{2}^{j}(x,y,s,t)=0,~\text{for}~1 \leq \nu \leq d,\, t \in \Gamma_{1}.
\end{equation}

Since $I^{j}(x,y,s)$ is $C^{\infty}$ and bounded, we can find a bounded $C^{\infty}$ function $F_{0}^{j}(x,y,s,t)~(|t|~\text{small})$ such that
\begin{equation}\label{eqn29}
F_{0}^{j}(x,y,s,0)=I^{j}(x,y,s),~\text{and}~\overline{\partial}_{w_{\nu}} F_{0}^{j}(x,y,s,t)=O(|t|^{l}),~\forall \nu=1,\cdots,d, \forall l=1,2,3,\dots
\end{equation}

Let $\varphi (x,y,s) \in C_{0}^{\infty}(W)$ such that its support is contained in a neighborhood of the origin where $\phi \equiv 1.$ By Parseval's formula,
\begin{equation}\label{eqn210}
\begin{split}
\lim_{t\rightarrow 0, t  \in \Gamma_{1}} \int_{\mathbb{R}^{2n+d}} F_{0}^{j}(x,y,s,t) \varphi(x,y,s)dx dy ds = & \int_{\mathbb{R}^{2n+d}} I^{j}(x,y,s)\varphi (x,y,s)dx dy ds\\ = & \int_{\mathbb{R}^{2n+d}} \widehat{I^{j}}(\xi,\eta,\sigma)\widehat{\varphi}(-\xi,-\eta,-\sigma) d\xi d\eta d\sigma\\ = & \int_{A}\widehat{\phi H_{j}}(\xi,\eta,\sigma)\widehat{\varphi}(-\xi,-\eta,-\sigma) d\xi d\eta d\sigma
\end{split}
\end{equation}
Likewise, since $F_{2}^{j}$ is $C^{\infty}$ and bounded,
\begin{equation}\label{eqn211}
\int_{\mathbb{R}^{2n+d}} F_{2}^{j}(x,y,s) \varphi(x,y,s)dx dy ds= \int_{B_{2}} \widehat{\phi H_{j}}(\xi,\eta,\sigma)\widehat{\varphi}(-\xi,-\eta,-\sigma)d\xi d\eta d\sigma.
\end{equation}

For $t \in \Gamma_{1},$ using (\ref{eqn23}), we have,
\begin{equation}
\begin{split}
&\int_{\mathbb{R}^{2n+d}} F_{1}^{j}(x,y,s,t)\varphi (x,y,s)dx dy ds\\= & \int_{B_{1}}(\int_{\mathbb{R}^{2n+d}} e^{2\pi\sqrt{-1}(x \cdot \xi+y \cdot \eta +s \cdot \sigma)} \varphi(x,y,s)dx dy ds)e^{-t \cdot \sigma} \widehat{\phi H_{j}}(\xi,\eta, \sigma)d \xi d\eta d\sigma\\
= & \int_{B_{1}} \widehat{\varphi}(-\xi,-\eta,-\sigma)e^{-t \cdot \sigma} \widehat{\phi H_{j}}(\xi,\eta,\sigma)d \xi d\eta d\sigma,
\end{split}
\end{equation}
and hence
\begin{equation}\label{eqn212}
\lim_{t \rightarrow 0,~t \in \Gamma_{1}} \int_{\mathbb{R}^{2n+d}} F_{1}^{j}(x,y,s,t)\phi(x,y,s)dx dy ds= \int_{B_{1}} \widehat{\varphi}(-\xi,-\eta,-\sigma) \widehat{\phi H_{j}}(\xi,\eta,\sigma)d \xi d\eta d\sigma
\end{equation}

Let $F^{j}(x,y,s,t)=F_{0}^{j}(x,y,s,t)+F_{1}^{j}(x,y,s,t)+F_{2}^{j}(x,y,s,t)$ for $t \in \Gamma_{1}.$
From (\ref{eqn210}),(\ref{eqn211}) and (\ref{eqn212}),
\begin{equation}
\begin{split}
\lim_{t \rightarrow 0,~t \in \Gamma_{1}} \int_{\mathbb{R}^{2n+d}} F^{j}(x,y,s,t)\varphi(x,y,s)dx dy ds=& \int_{\mathbb{R}^{2n+d}} \widehat{\varphi}(-\xi,-\eta,-\sigma) \widehat{\phi H_{j}}(\xi,\eta,\sigma)d \xi d\eta d\sigma\\
=&\int_{\mathbb{R}^{2n+d}} \phi(x,y,s) H_{j}(x,y,s) \varphi(x,y,s)dx dy ds.
\end{split}
\end{equation}
Therefore, in a neighborhood of the origin, in the distribution sense,
\begin{equation}\label{eqn213}
\lim_{t \rightarrow 0,~ t \in \Gamma_{1}} F^{j}(x,y,s,t)=H_{j}(x,y,s).
\end{equation}
For $t \in \Gamma_{1}$ small, from (\ref{eqn26})-(\ref{eqn29}), we have:
for $(x,y,s)$ near $0,$ given $\alpha,\beta,\gamma,$ there exists $C_{1}>0$ such that for some $\lambda>0,$
\begin{equation}\label{eqn214}
|\partial_{x}^{\alpha}\partial_{y}^{\beta}\partial_{s}^{\gamma}F^{j}(x,y,s,t)| \leq \frac{C_{1}}{|t|^\lambda},~\text{and}
\end{equation}
\begin{equation}\label{eqn215}
\partial_{x}^{\alpha}\partial_{y}^{\beta}\partial_{s}^{\gamma}\overline{\partial}_{w_{\nu}}F^{j}(x,y,s,t)=O(|t|^{l}),
\,\,\forall l \geq 1,~\forall \nu=1,\cdots,d.
\end{equation}

For the rest of the proof, we follow the argument of claim 3 in [La1]. We may assume that $H(0)=0 \in M'.$
Let $\rho=(\rho_{1},\cdots,\rho_{d'})$ be defining functions for $M'$ near $0.$ For $\alpha \in \mathbb{N}^{n}$ a multiindex, recall that $L^{\alpha}=L_{1}^{\alpha_{1}}\cdots L_{n}^{\alpha_{n}}.$

Set $F(x,y,s,t)=(F^{1}(x,y,s,t),\cdots,F^{N'}(x,y,s,t)),t \in \Gamma_{1}.$ As in [La1], there are smooth functions
$\Psi_{\mu,\alpha}(Z',\overline{Z'},W)$ for $|\alpha| \leq k_{0}, 1\leq \mu \leq d',$ defined in a neighborhood of $\{0\} \times \mathbb{C}^{K(k_{0})}$ in $\mathbb{C}^{N'} \times \mathbb{C}^{K(k_{0})},$ polynomial in $W,$ such that
\begin{equation}\label{eqn216}
L^{\alpha} \rho_{\mu}(H(z,s),\overline{H(z,s)})=\Psi_{\mu,\alpha}(H(z,s),\overline{H(z,s)},(L^{\beta}\overline{H}(z,s))_{|\beta| \leq k_{0}}),
\end{equation}
and
\begin{equation}\label{eqn217}
L^{\alpha}\rho_{\mu,Z'}(H,\overline{H})|_{0}=\Psi_{\mu,\alpha,Z'}(0,0,(L^{\beta} \overline{H}(0,0))_{|\beta| \leq k_{0}}).
\end{equation}
Here $K(k_{0})=N'|\{\beta:|\beta| \leq k_{0}\}|.$ Equation (\ref{eqn217}) and the $k_{0}-$nondegeneracy assumption on the map $H$ allows us to get $(\alpha^{1},\cdots,\alpha^{N'}),(\mu_{1},\cdots,\mu_{N'}) \in \mathbb{N}^{N'}$ and a smooth function $\psi(Z',\overline{Z'},W)=(\psi_{1},\cdots,\psi_{N'}),$ which is holomorphic in $W,$ such that with
\[
\Psi=(\Psi_{\mu_{1},\alpha^{1}},\cdots,\Psi_{\mu_{N'},\alpha^{N'}}),
\]
if $\Psi(Z',\overline{Z'},W)=0,$ then $Z'=\psi(Z',\overline{Z'},W).$ Moreover, with $Z'=(z'_{1},\cdots,z'_{N'}),$ we have,
\begin{equation}\label{eqn218}
D^{\alpha}\frac{\partial \psi_{j}}{\partial z'_{i}}(Z',\overline{Z'},W)=0,~\forall i=1,\cdots,N',\,j=1,\cdots,N',
\end{equation}
whenever $Z'=\psi(Z',\overline{Z'},W).$ In particular, since $\Psi_{l,\alpha}(H(z,s),\overline{H}(z,s),(L^{\beta}\overline{H}(z,s))_{|\beta|\leq k_{0}})=0,$ we have,

\begin{equation}\label{eqn219}
H_{j}(z,s)=\psi_{j}(F(z,s,0),\overline{F}(z,s,0),(L^{\beta}\overline{F}(z,s,0))_{|\beta|\leq k_{0}}),
\forall j=1,\cdots,N'.
\end{equation}
Recall that for $i=1,\cdots,n,$
$$L_{i}=\frac{\partial}{\partial \overline{z}_{i}}+\sum_{j=1}^{n}a_{ij}(x,y,s)\frac{\partial}{\partial z_{j}} + \sum_{l=1}^{d} b_{il}(x,y,s)\frac{\partial}{\partial s_{l}}.$$
Let
$$M_{i}=\frac{\partial}{\partial \overline{z}_{i}} + \sum_{j=1}^{n} A_{ij}(x,y,s,t)\frac{\partial}{\partial z_{j}} +\sum_{l=1}^{d} B_{il}(x,y,s,t)\frac{\partial}{\partial s_{l}}, 1 \leq i \leq n,$$
where the $A_{ij}$ and $B_{il}$ are smooth extensions of the $a_{ij}$ and $b_{il}$ satisfying
\begin{equation}\label{eqn220}
\overline{\partial}_{w_{\nu}}A_{ij}(x,y,s,t),\overline{\partial}_{w_{\nu}}B_{il}(x,y,s,t)=O(|t|^{m}),~\forall \nu=1,\cdots,d,~\forall m=1,2,\cdots.
\end{equation}

Now define
$$g_{j}(z,s,t)=\psi_{j}(F(z,s,-t),\overline{F}(z,s,-t),(M^{\beta}\overline{F}(z,s,-t))_{|\beta| \leq k_{0}}),$$
for $j=1,\cdots,N'$ and for $t \in -\Gamma_{1},|t|$ small. Using (\ref{eqn215}),(\ref{eqn218}) and (\ref{eqn220}), we conclude that, when $(z,s)$ is near the origin in $\mathbb{C}^{n} \times \mathbb{R}^{d}$ and $t \in -\Gamma_{1} \,(|t| ~\text{small}),$ for any $\alpha,\beta,\gamma$ multiindices, there is $C>0$ such that
\begin{equation}\label{eqn221}
|D_{x}^{\alpha}D_{y}^{\beta}D_{s}^{\gamma}g_{j}(z,s,t)| \leq \frac{C}{|t|^{\lambda}}~\text{for some}\,\lambda >0.
\end{equation}
and
\begin{equation}\label{eqn222}
D_{x}^{\alpha}D_{y}^{\beta}D_{s}^{\gamma}\overline{\partial}_{w_{\nu}}g_{j}(z,s,t)=O(|t|^{m}), \forall m=1,2,\cdots,\nu=1,\cdots,d.
\end{equation}
From (\ref{eqn219}), we know that,
\begin{equation}\label{eqn223}
H_{j}(z,s)=\lim_{t \rightarrow 0,~t \in -\Gamma_{1}}g_{j}(z,s,t), \forall j=1,\cdots,N'.
\end{equation}
By Theorem V.3.7 in [BCH], it follows that $\mathrm{WF}(H_{j})|_{0} \cap \Gamma =\emptyset.$ Since by assumption $WF(H_{j})|_{0} \subset \Gamma,$ we conclude that $H$ is $C^{\infty}$ near the origin.

\section{Proof of Theorem 2.5}
Fix any $p \in M,$ and assume $p'=F(p)=0.$  Since $M'$ is strictly pseudoconvex, we may assume that there is a
neighborhood $G$ of $0$ in $\mathbb{C}^{n+k},$ and a local defining
function $\rho$ of $M'$ in $G$ such that
$$M' \cap G=\{Z' \in G:\rho(Z',\overline{Z'})=0\},$$
where
$\rho(Z',\overline{Z'})=-v'+\sum_{j=1}^{n+k-1}|z'_{j}|^2+\phi^{*}(Z',\overline{Z'})$. Here $Z'=(z'_{1},\cdots,z'_{n+k})$ are the coordinates of
$\mathbb{C}^{n+k},$ $z'_{n+k}=u'+\sqrt{-1} v'$ and
$\phi^{*}(Z',\overline{Z'})=O(|Z'|^3)$ is a real-valued smooth
function on $G$.
Note that $\mathrm{rank}_{l}(F,p)$ is a lower semi-continuous
integer-valued function on $M$ for each $1 \leq l \leq k.$ For any $p \in M,$

$$\mathrm{rank}_{0}(F,p) \leq \mathrm{rank}_{1}(F,p) \leq \cdots \leq \mathrm{rank}_{k}(F,p).$$

We next recall  some basic properties of the rank of $F.$ Write
$F=(F_{1},\cdots,F_{n+k}).$ Since $F(M) \subset M',$
we have
\begin{equation}
\rho(F,\overline{F})=-\frac{F_{n+k}-\overline{F_{n+k}}}{2\sqrt{-1}}+F_{1}\overline{F_{1}}+\cdots+
F_{n+k-1}\overline{F_{n+k-1}}+\phi^{*}(F,\overline{F})=0,
\end{equation}
on $M$ near $p.$ Applying
$L_{1},\cdots,L_{n}$ to the above equation,
we get
\begin{equation}\label{eqlrhoz}
\frac{L_{j}\overline{F_{n+k}}}{2\sqrt{-1}}+F_{1}L_{j}\overline{F_{1}}+\cdots+F_{n+k-1}
L_{j}\overline{F_{n+k-1}}+L_{j}\phi^{*}(F,\overline{F})=0,\,1 \leq j \leq n,
\end{equation}

\begin{equation}\label{eqlberhoz}
\frac{L^{\alpha}\overline{F_{n+k}}}{2\sqrt{-1}}+F_{1}L^{\alpha}\overline{F_{1}}+\cdots+F_{n+k-1}
L^{\alpha}\overline{F_{n+k-1}}+L^{\alpha}\phi^{*}(F,\overline{F})=0,
\end{equation}
on $M$ near $p$ for any multiindex $1 \leq |\alpha| \leq k.$ Therefore, on $M$ near  $p,$
\begin{equation}\label{eqrhoz}
\rho_{Z'}(F,\overline{F})=
(\overline{F_{1}}+\phi^{*}_{z'_{1}}(F,\overline{F}),\cdots,\overline{F_{n+k-1}}+\phi^{*}_{z'_{n+k-1}}(F,\overline{F}),
\frac{\sqrt{-1}}{2}+\phi^{*}_{z'_{n+k}}(F,\overline{F})),
\end{equation}
and for any multiindex $1 \leq |\alpha| \leq k,$
\begin{equation}
L^{\alpha}\rho_{Z'}(F,\overline{F})=(L^{\alpha}(\overline{F_{1}}+\phi^{*}_{z'_{1}}),
\cdots, L^{\alpha}(\overline{F_{n+k-1}}+\phi^{*}_{z'_{n+k-1}}),L^{\alpha}\phi^{*}_{z'_{n+k}}).
\end{equation}

\begin{lemma}\label{lerankc}
With the assumption of Theorem \ref{thm}, for any $p \in M,$ we have
$\mathrm{rank}_{0}(F,p)=1,~\mathrm{rank}_{1}(F,p)=n+1,$ and thus $\mathrm{rank}_{l}(F,p) \geq n+1,$ for $1 \leq l \leq k.$
\end{lemma}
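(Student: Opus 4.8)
The plan is to compute the two lowest geometric ranks directly from the explicit form of $\rho$ and the equations $(4.2)$--$(4.7)$, using the strong pseudoconvexity of $M'$ together with the hypothesis that $dF:\mathcal V_p\to\mathcal V'_{F(p)}$ is injective. First I would record that $\mathrm{rank}_0(F,p)=\dim_{\mathbb C}E_0(p)$, where $E_0(p)=\mathrm{Span}_{\mathbb C}\{\rho_{Z'}(F,\overline F)|_p\}$. Since $p'=F(p)=0$ and $\phi^{*}=O(|Z'|^3)$, equation $(4.5)$ gives $\rho_{Z'}(F,\overline F)|_p=(0,\dots,0,\tfrac{\sqrt{-1}}{2})\neq 0$, so $E_0(p)$ is one-dimensional and $\mathrm{rank}_0(F,p)=1$. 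This is the trivial part.

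The substance is the equality $\mathrm{rank}_1(F,p)=n+1$, i.e. $\dim_{\mathbb C}E_1(p)=n+1$. By definition $E_1(p)$ is spanned by $\rho_{Z'}(F,\overline F)|_p$ together with the $n$ vectors $L_j\rho_{Z'}(F,\overline F)|_p$, $1\le j\le n$. Using $(4.6)$ and again $\phi^{*}=O(|Z'|^3)$ (so all first derivatives of $\phi^{*}$ vanish to order $\geq 2$ at $0$, and applying $L_j$ and evaluating at $p$ kills the $\phi^{*}$ contributions since $F(p)=0$), I get
\begin{equation*}
L_j\rho_{Z'}(F,\overline F)|_p=\bigl(L_j\overline{F_1}|_p,\dots,L_j\overline{F_{n+k-1}}|_p,\,0\bigr),\qquad 1\le j\le n.
\end{equation*}
So $E_1(p)$ is the span of $(0,\dots,0,\tfrac{\sqrt{-1}}{2})$ and these $n$ vectors, and it suffices to show the $n\times(n+k-1)$ matrix $\bigl(L_j\overline{F_i}|_p\bigr)_{1\le j\le n,\,1\le i\le n+k-1}$ has rank exactly $n$. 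Equivalently, writing $\overline L_j$ in place of $L_j$ after conjugation, the $n\times(n+k-1)$ matrix $\bigl(\overline{L}_j F_i|_p\bigr)$ (the "holomorphic part of $dF$ restricted to $\mathcal V_p$", read in the coordinates $z'_1,\dots,z'_{n+k-1}$) has rank $n$. This rank is at most $n$ since there are only $n$ vector fields, so the real content is that it is not smaller than $n$.

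The key step — and the main obstacle — is to deduce this maximal rank from the injectivity of $dF:\mathcal V_p\to\mathcal V'_{F(p)}$. The idea is: $dF$ maps each $L_j(p)\in\mathcal V_p$ into $\mathcal V'_{0}$, the CR tangent space of $M'$ at $0$; in the given coordinates, with $\rho=-v'+\sum_{j=1}^{n+k-1}|z'_j|^2+O(|Z'|^3)$, the space $\mathcal V'_0$ is spanned by $\partial/\partial\bar z'_1,\dots,\partial/\partial\bar z'_{n+k-1}$ (the $z'_{n+k}$-component of any vector in $\mathcal V'_0$ vanishes because $\rho_{z'_{n+k}}(0)=\tfrac{\sqrt{-1}}{2}\neq 0$ while the Levi-null directions have no $\bar z'_{n+k}$-component either). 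Thus in these coordinates the map $dF|_{\mathcal V_p}$ is represented precisely by the matrix $\bigl(\overline{L}_j F_i|_p\bigr)_{1\le i\le n+k-1}$ acting on the $L_j(p)$, and injectivity of $dF$ on the $n$-dimensional space $\mathcal V_p$ forces this matrix to have rank $n$. I would need to be careful to (a) verify that $\mathcal V'_0$ is exactly the coordinate span claimed, which uses only that $M'$ is a hypersurface with $\rho$ in the normalized form above, and (b) match the bookkeeping between $L^\alpha\overline{F_i}$ appearing in Definition 2.1 (built from $\rho_{\mu,Z'}\circ(H,\overline H)$) and the components of $dF$; once that dictionary is fixed the rank count is immediate. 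Finally, from $\mathrm{rank}_1(F,p)=n+1$ and the monotonicity $\mathrm{rank}_1(F,p)\le\cdots\le\mathrm{rank}_k(F,p)$ recorded just before the lemma, we conclude $\mathrm{rank}_l(F,p)\ge n+1$ for all $1\le l\le k$, completing the proof.
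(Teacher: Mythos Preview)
Your proposal is correct and follows essentially the same route as the paper's proof: compute $\rho_{Z'}(F,\overline F)|_p=(0,\dots,0,\tfrac{\sqrt{-1}}{2})$ from $(4.4)$ to get $\mathrm{rank}_0=1$, show that $L_j\rho_{Z'}(F,\overline F)|_p$ has vanishing last coordinate and that the remaining $n\times(n+k-1)$ block $(L_j\overline{F_i}|_p)$ has rank $n$ by injectivity of $dF|_{\mathcal V_p}$, and then invoke monotonicity. The only cosmetic difference is that the paper obtains $L_j\overline{F_{n+k}}|_p=0$ by evaluating equation $(4.2)$ at $p$, whereas you deduce the same vanishing from the description of $\mathcal V'_0$ in the normalized coordinates; these are two faces of the same computation.
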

\begin{proof}
Assume that $F(p)=0.$ Note that $\phi^{*}_{z'_{i}}(F,\overline{F})|_{p}=0,$ for all $1 \leq i \leq n+k.$ Equation $(4.4)$ shows that
$\mathrm{rank}_{0}(F,p)=1.$ By assumption,  $dF: \mathcal{V}_{p} \rightarrow T^{(0,1)}_{0}M'$ is injective.
 By plugging $Z=p$ in equation $(4.2)$, we get $L_{i}\overline{F}_{n+k}(p)=0$ for each
 $1 \leq i \leq n.$ Since $\{L_{1}, L_{2},\cdots,L_{n}\}$ is a local basis
  of $\mathcal{V}$ near $p,$ we conclude that the rank of the matrix
   $(L_{i}\overline{F}_{l})_{1 \leq i \leq n, 1 \leq l \leq n+k-1}$ is $n.$
   Without loss of generality, we assume that

$$\left|\begin{array}{ccccc}
    L_{1}\overline{F}_{1} & . & . & . & L_{1}\overline{F}_{n} \\
    . & . & . & . & . \\
    . & . & . & . & . \\
    . & . & . & . & . \\
    L_{n}\overline{F}_{1} & . & . & . & L_{n}\overline{F}_{n}
  \end{array}\right| \neq 0~\text{at}~p.
$$
Notice that~$\phi^{*}_{z'_{1}}|_{p}=\phi^{*}_{z'_{2}}|_{p}=\cdots=\phi^{*}_{z'_{n+k}}|_{p}=0,$ $L_{j}\phi^{*}_{z'_{1}}|_{p}=L_{j}\phi^{*}_{z'_{2}}|_{p}=\cdots=L_{j}\phi^{*}_{z'_{n+k}}|_{p}=0,$~for all $1 \leq j \leq n.$ Thus $\mathrm{rank}_{1}(F,p)=n+1.$ Consequently, $\mathrm{rank}_{l}(F,p) \geq n+1$ for $1 \leq l \leq k$ for any $p\in M$.
\end{proof}

To simplify the notations, let
$$a_{i}(Z,\overline{Z})=\overline{F}_{i}+\phi^{*}_{z'_{i}}(F,\overline{F}), \,1  \leq i \leq n+k-1,$$
$$a_{n+k}(Z,\overline{Z})=\frac{\sqrt{-1}}{2}+\phi^{*}_{z'_{n+k}}(F,\overline{F}),$$
$${\bf{a}}(Z,\overline{Z})=(a_{1},\cdots,a_{n+k}).$$
Then
$$\rho_{Z'}(F,\overline{F})={\bf{a}}=(a_{1},\cdots,a_{n+k-1},a_{n+k}),$$
$$L^{\alpha}\rho_{Z'}(F,\overline{F})=L^{\alpha}{\bf{a}}=(L^{\alpha}a_{1},\cdots,L^{\alpha}a_{n+k-1}
,L^{\alpha}a_{n+k})$$
for any multiindex $0 \leq |\alpha| \leq k.$ Recall that
$$\mathrm{rank}_{l}(F,p)=\mathrm{dim}_{\mathbb{C}}(\mathrm{Span}_{\mathbb{C}}\{L^{\alpha}{\bf{a}}(Z,\overline{Z})|_{p}:0 \leq |\alpha| \leq l\}).$$

The following normalization will be applied later in this section.

\begin{lemma}\label{lenorm}
Let $M, M', F$ be as in Theorem 2.5 and $p=0\in M$. Assume
$\mathrm{rank}_{l}(F,p)=N_{0},$ for some $1 \leq l \leq k,n+1 \leq N_{0} \leq n+k.$
Then there exist multiindices
$\{\beta_{n+1},\cdots,\beta_{N_{0}-1}\}$ with $1 < |\beta_{i}|
\leq l$ for all $i,$ such that after a linear biholomorphic change
of coordinates in $\mathbb{C}^{n+k}: \widetilde{Z}=Z'A^{-1},$ where
$A$ is a unitary $(n+k) \times (n+k)$ matrix, and $\widetilde{Z}$
denotes the new coordinates, the following hold:
\begin{equation}\label{eqnorm}
\widetilde{\bf{a}}|_{p}=(0,\cdots,0,\frac{\sqrt{-1}}{2}),
\left(\begin{array}{c}
  L_{1}\widetilde{\bf{a}}|_{p} \\
  \cdots \\
  L_{n}\widetilde{\bf{a}}|_{p} \\
  L^{\beta_{n+1}}\widetilde{\bf{a}}|_{p}  \\
  \cdots \\
  L^{\beta_{N_{0}-1}}\widetilde{\bf{a}}|_{p}
\end{array}\right)=\left(\begin{array}{ccc}
                     {\bf{B}}_{N_{0}-1} & {\bf{0}} & {\bf{b}}
                   \end{array}\right).
\end{equation}
Here we write $\widetilde{\bf{a}}=\widetilde{\rho}_{\widetilde{Z}}(\widetilde{Z}(F),\overline{\widetilde{Z}(F)}),$ and $\widetilde{\rho}$ is a local defining function of $M'$ near $0$ in the new coordinates. Moreover,  ${\bf{B}}_{N_{0}-1}$ is an invertible $(N_{0}-1)\times (N_{0}-1)$ matrix, ${\bf{0}}$ is an $(N_{0}-1) \times(n+k-N_{0})$ zero matrix, and ${\bf{b}}$ is an $(N_{0}-1)-$dimensional column vector.
\end{lemma}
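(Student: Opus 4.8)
The plan is to derive the normalization from elementary linear algebra on the vectors $L^{\alpha}\mathbf{a}|_{p}\in\mathbb{C}^{n+k}$, using Lemma~\ref{lerankc} to control the low--order terms, and to realize it by a single unitary change of coordinates in $\mathbb{C}^{n+k}$ fixing the $z'_{n+k}$--axis.

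\emph{Step 1: choice of the multiindices.} By Lemma~\ref{lerankc} we have $\mathrm{rank}_{1}(F,p)=n+1$, and its proof in fact shows that the $n+1$ vectors $\mathbf{a}|_{p},L_{1}\mathbf{a}|_{p},\dots,L_{n}\mathbf{a}|_{p}$ are linearly independent: indeed $\mathbf{a}|_{p}=(0,\dots,0,\tfrac{\sqrt{-1}}{2})$ has nonzero last entry, each $L_{i}\mathbf{a}|_{p}$ has vanishing last entry, and the matrix $(L_{i}\overline{F}_{l})_{1\le i\le n,\,1\le l\le n+k-1}$ has rank $n$ at $p$. Since $\mathrm{rank}_{l}(F,p)=N_{0}$, the space $E:=\mathrm{Span}_{\mathbb{C}}\{L^{\alpha}\mathbf{a}|_{p}:0\le|\alpha|\le l\}$ has dimension $N_{0}$, so we may enlarge the preceding set to a basis of $E$ by adjoining $N_{0}-1-n$ vectors $L^{\beta_{n+1}}\mathbf{a}|_{p},\dots,L^{\beta_{N_{0}-1}}\mathbf{a}|_{p}$; since $\mathrm{rank}_{1}(F,p)$ already equals $n+1$, each $\beta_{i}$ must satisfy $2\le|\beta_{i}|\le l$ (and no $\beta_{i}$ appears if $N_{0}=n+1$).

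\emph{Step 2: the key projection and the unitary change.} Put
\[
W:=\mathrm{Span}_{\mathbb{C}}\{L_{1}\mathbf{a}|_{p},\dots,L_{n}\mathbf{a}|_{p},L^{\beta_{n+1}}\mathbf{a}|_{p},\dots,L^{\beta_{N_{0}-1}}\mathbf{a}|_{p}\},
\]
so $\dim_{\mathbb{C}}W=N_{0}-1$ and $\mathbf{a}|_{p}=\tfrac{\sqrt{-1}}{2}e_{n+k}\notin W$. Let $\pi:\mathbb{C}^{n+k}\to\mathbb{C}^{n+k-1}$ be the projection forgetting the last coordinate. Then $\ker\pi=\mathbb{C}\,e_{n+k}$ meets $W$ only at $0$ (otherwise $\mathbf{a}|_{p}$ would lie in $W$), so $\pi|_{W}$ is injective and $\dim_{\mathbb{C}}\pi(W)=N_{0}-1$. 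Now choose $A$ to be block diagonal with an $(n+k-1)\times(n+k-1)$ unitary block $A'$ and a $1$ in the last slot, and set $\widetilde{Z}=Z'A^{-1}$. A routine chain--rule computation gives $\widetilde{\mathbf{a}}=\mathbf{a}\,A^{T}$, hence $L^{\alpha}\widetilde{\mathbf{a}}|_{p}=(L^{\alpha}\mathbf{a}|_{p})A^{T}$ for every $\alpha$; since the last column of $A$ is $e_{n+k}$, this already yields $\widetilde{\mathbf{a}}|_{p}=(0,\dots,0,\tfrac{\sqrt{-1}}{2})$, the first asserted identity. The assignment $v\mapsto v(A')^{T}$ is a unitary automorphism of $\mathbb{C}^{n+k-1}$ for the standard Hermitian form, and every such automorphism arises in this way as $A'$ varies; since $\dim_{\mathbb{C}}\pi(W)=N_{0}-1$, we may pick $A'$ so that this map sends $\pi(W)$ onto $\mathrm{Span}_{\mathbb{C}}(e_{1},\dots,e_{N_{0}-1})$. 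Because $A$ is block diagonal, for each $\alpha$ in the list of Step~1 the vector $L^{\alpha}\widetilde{\mathbf{a}}|_{p}$ has zero entries in the columns $N_{0},\dots,n+k-1$, so stacking these $N_{0}-1$ rows produces exactly the block pattern $(\mathbf{B}_{N_{0}-1}\ \ \mathbf{0}\ \ \mathbf{b})$, with $\mathbf{0}$ of size $(N_{0}-1)\times(n+k-N_{0})$.

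\emph{Step 3: invertibility of $\mathbf{B}_{N_{0}-1}$, and the main difficulty.} The stacked $(N_{0}-1)\times(n+k)$ matrix has rank $N_{0}-1$, because a unitary change of coordinates preserves rank and the original rows $L^{\alpha}\mathbf{a}|_{p}$ (with $\alpha$ in the list) are linearly independent by construction. Deleting its last column leaves $(\mathbf{B}_{N_{0}-1}\ \ \mathbf{0})$, whose row span is the image of $\pi(W)$ and so still has dimension $N_{0}-1$; therefore the square block $\mathbf{B}_{N_{0}-1}$ is invertible, which completes the proof. The one step needing care is the bookkeeping of the coordinate change: one must check that fixing the $z'_{n+k}$--axis forces $A$ to be block diagonal, so that $\mathbf{a}$ and all the $L^{\alpha}\mathbf{a}|_{p}$ are transported by one and the same constant invertible matrix and both the rank and the vanishing pattern are preserved. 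I expect no genuine obstacle here; the argument is essentially a normal--form computation.
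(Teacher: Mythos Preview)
Your argument is essentially the same as the paper's: choose the multiindices to complete $\{\mathbf{a},L_{1}\mathbf{a},\dots,L_{n}\mathbf{a}\}|_{p}$ to a basis of $E_{l}(p)$, project onto the first $n+k-1$ coordinates (the paper writes $\hat{\mathbf{a}}=(a_{1},\dots,a_{n+k-1})$ where you write $\pi$), observe that the projected vectors remain independent, and then apply a block--diagonal unitary $A=\mathrm{diag}(A',1)$ taking their span to $\mathrm{Span}(e_{1},\dots,e_{N_{0}-1})$. The only cosmetic discrepancy is that the paper records the chain rule as $\widetilde{\mathbf{a}}=\mathbf{a}\,A$ rather than $\mathbf{a}\,A^{T}$, a matter of row/column conventions; your internal bookkeeping is consistent and leads to the same normal form.
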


\begin{proof}
It follows from  Lemma \ref{lerankc} that
$$\{{\bf{a}},L_{1}{\bf{a}},\cdots,L_{n}{\bf{a}}\}|_{p}$$
is linearly independent. Extend it to a basis of $E_{l}(p),$ which
has dimension $N_{0}$ by assumption. That is, we choose
multiindices $\{\beta_{n+1},\cdots,\beta_{N_{0}-1}\}$ with $1 <
|\beta_{i}| \leq l$ for each $i,$ such that
$$\{{\bf{a}},L_{1}{\bf{a}},\cdots,L_{n}{\bf{a}},
L^{\beta_{n+1}}{\bf{a}},\cdots,L^{\beta_{N_{0}-1}}{\bf{a}}\}|_{p}$$
is linearly independent over $\mathbb{C}$.
We write $\widehat{\bf{a}}:=(a_{1},\cdots,a_{n+k-1}),$ that is, the first $n+k-1$ components of ${\bf{a}}.$
Notice that ${\bf{a}}(p)=(0,\cdots,0,\frac{\sqrt{-1}}{2}).$ Consequently,
$$\{L_{1}{\bf{\widehat{a}}},\cdots,L_{n}{\bf{\widehat{a}}},L^{\beta_{n+1}}{\bf{\widehat{a}}},\cdots,
L^{\beta_{N_{0}-1}}{\bf{\widehat{a}}}\}|_{p}$$
is linearly independent in $\mathbb{C}^{n+k-1}$. Let $S$ be the $(N_{0}-1)-$dimensional vector space spanned by them and let $\{T_{1},\cdots,T_{N_{0}-1}\}$ be an orthonormal basis of $S.$ Extend it to an orthonormal basis of $\mathbb{C}^{n+k-1}:\{T_{1},\cdots,T_{N_{0}-1},T_{N_{0}},\cdots,T_{n+k-1}\}$ and set
$$T=\left(\begin{array}{c}
       T_{1} \\
       \cdots \\
       T_{n+k-1}
     \end{array}\right)^{t},~~~~~
A=\left(\begin{array}{cc}
    T & {\bf{0}}^{t}_{n+k-1} \\
    {\bf{0}}_{n+k-1} & 1
  \end{array}\right).
$$
Here ${\bf{0}}_{n+k-1}$ is an $(n+k-1)-$dimensional zero row vector.
Next we make the following change of coordinates: $Z'=\widetilde{Z}A,~\text{or}~\widetilde{Z}=Z'A^{-1}.$
 The function  $\widetilde{\rho}(\widetilde{Z},\overline{\widetilde{Z}})=\rho(\widetilde{Z}A,\overline{\widetilde{Z}A})$
is a defining function of $M'$ near $0$ with respect to the new coordinates $\widetilde{Z}.$ By the chain rule,
\begin{equation}
\widetilde{\rho}_{\widetilde{Z}}(\widetilde{Z}(F),\overline{\widetilde{Z}(F)})
=\rho_{Z'}(F,\overline{F})A.
\end{equation}
For any multiindex $\alpha,$
\begin{equation}
L^{\alpha}\widetilde{\rho}_{\widetilde{Z}}(\widetilde{Z}(F),\overline{\widetilde{Z}(F)})
=L^{\alpha}\rho_{Z'}(F,\overline{F})A.
\end{equation}
In particular, at $p$, we get:

\begin{equation}
\widetilde{\bf{a}}|_{p}={\bf{a}}|_{p}A,~
\left(\begin{array}{c}
  L_{1}\widetilde{\bf{a}}|_{p} \\
  \cdots \\
  L_{n}\widetilde{\bf{a}}|_{p} \\
  L^{\beta_{n+1}}\widetilde{\bf{a}}|_{p}  \\
  \cdots \\
  L^{\beta_{N_{0}-1}}\widetilde{\bf{a}}|_{p}
\end{array}\right)=\left(\begin{array}{c}
  L_{1}{{\bf{a}}}|_{p} \\
  \cdots \\
  L_{n}{{\bf{a}}}|_{p} \\
  L^{\beta_{n+1}}{{\bf{a}}}|_{p}  \\
  \cdots \\
  L^{\beta_{N_{0}-1}}{{\bf{a}}}|_{p}
\end{array}\right)A.
\end{equation}
Furthermore from the definition of $A,$  in the new coordinates, equation $(\ref{eqnorm})$ holds and ${\bf{B}}_{N_{0}-1}$ is invertible.
\end{proof}

\begin{remark}
From the construction of $A$ in the proof of Lemma \ref{lenorm}, one
can see that in the new coordinates $\widetilde{Z}$, the following continues to hold: There is a neighborhood $G$ of $p'=0$ in $\mathbb{C}^{n+k},$
and a smooth real-valued function $\widetilde{\rho}$ in $G,$ such
that,
$$M' \cap G=\{\widetilde{Z} \in G:\widetilde{\rho}(\widetilde{Z},\overline{\widetilde{Z}})=0\}.$$
Moreover, $\widetilde{\rho}(\widetilde{Z}, \overline{\widetilde{Z}})=-\widetilde{v}+\sum_{j=1}^{n+k-1}|\widetilde{z}_{j}|^2
+\widetilde{\phi^{*}}(\widetilde{Z},\overline{\widetilde{Z}}), ~\text{where}~ \widetilde{Z}=(\widetilde{z}_{1},\cdots,\widetilde{z}_{n+k}),
\widetilde{z}_{n+k}=\widetilde{u}+\sqrt{-1}\widetilde{v}$ and $\widetilde{\phi^{*}}(\widetilde{Z}, \overline{\widetilde{Z}})=O(|\widetilde{Z}|^3)$ is a real-valued smooth function in $G.$ We will write the new coordinates as $Z$ instead of $\widetilde{Z}.$
\end{remark}

We will next prove some lemmas on the determinants of matrices.
\begin{lemma}\label{leal1}
For a general $n \times n $ matrix
$$B=\left(\begin{array}{cccccc}
    b_{11} & b_{12} & . & . & . & b_{1n} \\
    b_{21} & b_{22} & . & . & . & b_{2n} \\
    . & . & . & . & . & . \\
    . & . & . & . & . & . \\
    . & . & . & . & . & . \\
    b_{n1} & b_{n2} & . & . & . & b_{nn}
  \end{array}\right),
$$
where $b_{ij} \in \mathbb{C}$ for all $1 \leq i,j \leq n, n \geq 3,$ we have,

$
\left|\begin{array}{cc}
    B(\begin{array}{ccccccc}
        1 & 2 & . & . & . & n-2 & n-1  \\
        1 & 2 & . & . & . & n-2 & n-1
      \end{array}
    ) &~~~~~~B(\begin{array}{ccccccc}
            1 & 2 & . & . & . & n-2 & n-1 \\
            j_{1} & j_{2} & . & . & . & j_{n-2} & n
          \end{array}
    ) \\
    B(\begin{array}{ccccccc}
        i_{1} & i_{2} & . & . & . & i_{n-2} & n \\
        1 & 2 & . & . & . & n-2 & n-1
      \end{array}
    ) & B(\begin{array}{ccccccc}
            i_{1} & i_{2} & . & . & . & i_{n-2} & n \\
            j_{1} & j_{2} & . & . & . & j_{n-2} & n
          \end{array}
    )
  \end{array}\right|~~~~~(*)$
$\\=B(\begin{array}{cccccc}
      i_{1} & i_{2} & . & . & . & i_{n-2} \\
      j_{1} & j_{2} & . & . & . & j_{n-2}
    \end{array}
)|B|,$ for any $1
\leq i_{1} < i_{2} < \cdots < i_{n-2} \leq n-1, 1 \leq j_{1} < j_{2}
< \cdots < j_{n-2} \leq n-1.$ In particular, if $|B|=0,$ then $(*)$ equals  $0.$ Here we have used the notation $\\
B(\begin{array}{cccccc}
                 i_{1} & i_{2} & . & . & . & i_{p} \\
                 j_{1} & j_{2} & . & . & . & j_{p}
               \end{array}
)=\left|\begin{array}{cccccc}
    b_{i_{1}j_{1}} & b_{i_{1}j_{2}} & . & . & . & b_{i_{1}j_{p}} \\
    b_{i_{2}j_{1}} & b_{i_{2}j_{2}} & . & . & . & b_{i_{2}j_{p}} \\
    . & . & . & . & . & . \\
    . & . & . & . & . & . \\
    . & . & . & . & . & . \\
    b_{i_{p}j_{1}} & b_{i_{p}j_{2}} & . & . & . & b_{i_{p}j_{p}}
  \end{array}\right|$ for $1 \leq p \leq n.$
\end{lemma}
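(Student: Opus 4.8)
The identity $(*)$ is a special case of the classical Sylvester--Franke / Desnanot--Jacobi type determinant identity, so the plan is to reduce it to a normalized situation and then to a known minor identity. The key observation is that the $2\times 2$ block determinant on the left is an $n\times n$ determinant whose rows and columns are built from the rows/columns of $B$: the first $n-1$ rows come from rows $1,2,\dots,n-1$ of $B$ (with columns reordered so that $j_1<\cdots<j_{n-2}$ appear first and column $n$ last), and the last row is row $n$ of $B$ in the same column order, but with its first $n-1$ entries replaced by the entries in columns $1,2,\dots,n-1$. Hence $(*)$ is itself an $n\times n$ determinant of a matrix obtained from $B$ by a permutation of columns and a permutation of rows. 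The first step is therefore to rename indices: after permuting the columns of $B$ so that $(j_1,\dots,j_{n-2},n)$ becomes $(1,2,\dots,n-1,n)$ and permuting rows $1,\dots,n-1$ so that $(i_1,\dots,i_{n-2})$ occupies the first $n-2$ positions, the claimed identity becomes the statement that, writing $C$ for the resulting matrix,
\[
\left|\begin{array}{cc}
C(\,1\cdots n-1\,;\,1\cdots n-2,\,*\,) & C(\,1\cdots n-1\,;\,1\cdots n-2,\,n\,)\\
C(\,1\cdots n-2,\,n\,;\,1\cdots n-2,\,*\,) & C(\,1\cdots n-2,\,n\,;\,1\cdots n-2,\,n\,)
\end{array}\right|
= C(\,1\cdots n-2\,;\,1\cdots n-2\,)\,|C|,
\]
where the starred columns recover columns $1,\dots,n-1$; the sign changes from the two permutations cancel because the same column permutation acts on $|B|$ and on the block, and the row permutation acts only on rows $1,\dots,n-1$ which appear identically on both sides. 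So it suffices to prove the identity in the ``standard'' position $i_\nu=\nu$, $j_\nu=\nu$.

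**Reduction to Desnanot--Jacobi.** In standard position the left side of $(*)$ is
\[
\det\begin{pmatrix} B_{11} & B_{1n}\\ B_{n1} & B_{nn}\end{pmatrix},
\]
where $B_{11}=B(1\cdots n-1;1\cdots n-1)$ is the $(n,n)$ minor, $B_{nn}=B(1\cdots n-2,n;1\cdots n-2,n)$ is the $(n-1,n-1)$ minor, $B_{1n}=B(1\cdots n-1;1\cdots n-2,n)$, and $B_{n1}=B(1\cdots n-2,n;1\cdots n-1)$; and the right side is $B(1\cdots n-2;1\cdots n-2)\cdot|B|$, with $B(1\cdots n-2;1\cdots n-2)$ the $\{n-1,n\}$-complementary principal minor. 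This is exactly the Desnanot--Jacobi (Dodgson condensation) identity applied to $B$ with the last two rows and last two columns as the distinguished ones: for an $n\times n$ matrix, the determinant of the $2\times2$ matrix of the four minors obtained by deleting one of rows $\{n-1,n\}$ and one of columns $\{n-1,n\}$ equals the product of $|B|$ with the minor obtained by deleting both rows $n-1,n$ and both columns $n-1,n$. I would either cite this directly or, to keep the paper self-contained, prove it in two lines via the Jacobi identity for minors of the adjugate: if $\mathrm{adj}(B)$ denotes the adjugate, then $\det\big(\mathrm{adj}(B)(\{n-1,n\};\{n-1,n\})\big) = |B|\cdot B(1\cdots n-2;1\cdots n-2)$, and the left-hand $2\times2$ determinant in $(*)$ is precisely (up to the already-cancelled signs) that $2\times2$ minor of $\mathrm{adj}(B)$, since the $(i,j)$ entry of $\mathrm{adj}(B)$ is the signed $(j,i)$ cofactor of $B$.

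**Main obstacle and conclusion.** The genuine mathematical content is trivial once one recognizes the identity; the only real work is bookkeeping with signs: one must check that the sign picked up in reordering the columns $(j_1,\dots,j_{n-2},n)\mapsto(1,\dots,n)$ inside the $2\times2$ block matches the sign picked up in the same reordering inside $|B|$ on the right, and similarly for the rows $(i_1,\dots,i_{n-2})$, so that the two sides scale by the same factor and the reduction to standard position is legitimate. I expect this sign-tracking to be the fussiest part, but it is entirely mechanical: each transposition of two adjacent columns multiplies both the left block determinant (acting on its last $n-1$ columns) and $|B|$ by $-1$, and a transposition of rows among $1,\dots,n-1$ multiplies the left block (acting on its first $n-1$ rows) and $|B|$ each by $-1$, while leaving $B(i_1\cdots i_{n-2};j_1\cdots j_{n-2})$ unchanged in magnitude and matched in sign by the standard cofactor-sign convention. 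The final clause, that $|B|=0$ forces $(*)=0$, is then immediate. I would present the proof as: (i) reduce to standard position by the permutation argument; (ii) identify the left side as a $2\times2$ minor of $\mathrm{adj}(B)$; (iii) invoke Jacobi's minor identity; (iv) remark on the degenerate case.
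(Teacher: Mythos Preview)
Your approach is correct in spirit and genuinely different from the paper's. The paper proves the identity by induction on $n$: it first establishes a condensation lemma (Lemma~4.5, namely $c_{11}^{\,p-2}|C|=|\widetilde C|$ for the $(p-1)\times(p-1)$ matrix of $2\times 2$ minors pivoted at $c_{11}$), checks the base case $n=3$ by hand (Lemma~4.6), and then for general $n$ repeatedly applies the condensation lemma together with the inductive hypothesis to collapse the size of the minors. The reduction to the case $i_\nu=\nu,\ j_\nu=\nu$ is handled in the paper by the single phrase ``the other cases are similar.'' Your route---reduce to the standard position by a row/column permutation and then identify the resulting $2\times 2$ determinant of $(n-1)$-minors with the Desnanot--Jacobi identity (equivalently, a $2\times 2$ instance of Jacobi's identity for minors of the adjugate)---is shorter and more conceptual, at the cost of invoking a classical result rather than reproving it; the paper is in effect giving a self-contained inductive proof of exactly that classical identity.

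One point deserves correction. In your Plan you assert that the $2\times 2$ block determinant $(*)$ is itself ``an $n\times n$ determinant of a matrix obtained from $B$ by a permutation of columns and a permutation of rows.'' That is false on degree grounds alone: $(*)$ is homogeneous of degree $2(n-1)$ in the entries of $B$, while an $n\times n$ determinant has degree $n$. Fortunately you do not actually use this claim; your real reduction is the permutation argument you outline later. That argument is sound, though your description of the sign-tracking is a bit loose (the left side is a $2\times 2$ determinant of four minors, so ``multiplying the left block by $-1$ on its last $n-1$ columns'' is not literally meaningful). Concretely: writing $\{i_1,\dots,i_{n-2}\}=\{1,\dots,n-1\}\setminus\{i_0\}$ and $\{j_1,\dots,j_{n-2}\}=\{1,\dots,n-1\}\setminus\{j_0\}$, the permutation of columns sending $(1,\dots,n-2,n-1,n)$ to $(j_1,\dots,j_{n-2},j_0,n)$ multiplies the top-left and bottom-left minors by $(-1)^{n-1-j_0}$ while leaving the top-right and bottom-right minors unchanged; hence $(*)$ picks up a factor $(-1)^{n-1-j_0}$, and $|B|$ picks up the same factor, so the two sides scale identically. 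The row permutation behaves analogously. With that bookkeeping made explicit your proof is complete.
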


To prove Lemma \ref{leal1}, we need the following Lemmas.

\begin{lemma}\label{le45}
Assume $p \geq 3,~C$ is a $p \times p$  matrix,

$$C=\left(\begin{array}{ccc}
      c_{11} & \cdots & c_{1p} \\
      \cdots & \cdots & \cdots \\
      c_{p1} & \cdots & c_{pp}
    \end{array}\right),
$$

$\\$where~$c_{ij} \in \mathbb{C}$ for all $1 \leq i,j \leq p.$ Then
\begin{equation}\label{e4.10}
{c_{11}}^{p-2}|C|=|\widetilde{C}|,
\end{equation}

$\\$ where $\widetilde{C}$ is a $(p-1) \times (p-1) $ matrix given by

$$\widetilde{C}=\left(\begin{array}{ccc}
                  \left|\begin{array}{cc}
                    c_{11} & c_{12} \\
                    c_{21} & c_{22}
                  \end{array}\right|
                   & \cdots & \left|\begin{array}{cc}
                                c_{11} & c_{1p} \\
                                c_{21} & c_{2p}
                              \end{array}\right|
                    \\
                  \cdots & \cdots & \cdots \\
                  \left|\begin{array}{cc}
                    c_{11} & c_{12} \\
                    c_{p1} & c_{p2}
                  \end{array}\right|
                   & \cdots & \left|\begin{array}{cc}
                                c_{11} & c_{1p} \\
                                c_{p1} & c_{pp}
                              \end{array}\right|
                \end{array}\right).
$$
That is, $\widetilde{C}=(\widetilde{c}_{ij})_{1 \leq i \leq (p-1), 1 \leq j \leq (p-1)},$ with $\widetilde{c}_{ij}=\left|\begin{array}{cc}
                    c_{11} & c_{1(j+1)} \\
                    c_{(i+1)1} & c_{(i+1)(j+1)}
                  \end{array}\right|.$
\end{lemma}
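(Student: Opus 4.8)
The plan is to prove the identity $c_{11}^{\,p-2}|C| = |\widetilde C|$ by a direct Schur-complement / Sylvester-type manipulation, treating $c_{11}$ as a formal nonzero parameter (the general case then follows by continuity/polynomial identity, since both sides are polynomials in the entries $c_{ij}$). First I would assume $c_{11}\neq 0$. Observe that each entry of $\widetilde C$ is $\widetilde c_{ij} = c_{11}c_{(i+1)(j+1)} - c_{(i+1)1}c_{1(j+1)}$, which is exactly $c_{11}$ times the $(i,j)$ entry of the Schur complement of the $(1,1)$ block in $C$: writing $C = \begin{pmatrix} c_{11} & r \\ s & D\end{pmatrix}$ with $r$ a row vector, $s$ a column vector, and $D$ the lower-right $(p-1)\times(p-1)$ block, we have $\widetilde C = c_{11} D - s\,r = c_{11}\bigl(D - \tfrac{1}{c_{11}} s\,r\bigr)$.

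Next I would apply two standard determinant facts. On the one hand, the classical block formula gives $|C| = c_{11}\cdot \bigl|D - \tfrac{1}{c_{11}} s\,r\bigr|$ (obtained by row-reducing the first row into the others). On the other hand, pulling the scalar $c_{11}$ out of each of the $p-1$ rows of $\widetilde C$ yields $|\widetilde C| = c_{11}^{\,p-1}\bigl|D - \tfrac{1}{c_{11}} s\,r\bigr|$. Combining the two displays, $|\widetilde C| = c_{11}^{\,p-1}\cdot \tfrac{1}{c_{11}}|C| = c_{11}^{\,p-2}|C|$, which is precisely \eqref{e4.10}. Finally, since both $c_{11}^{\,p-2}|C|$ and $|\widetilde C|$ are polynomial functions of the $p^2$ entries of $C$ and they agree on the Zariski-dense open set $\{c_{11}\neq 0\}$, they agree identically; this disposes of the case $c_{11}=0$.

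I do not expect a serious obstacle here — the content is entirely the observation that the Desnanot–Jacobi / Schur-complement bordering of a matrix scales determinants in a controlled way. The only point requiring a little care is bookkeeping: verifying that $\widetilde c_{ij}$ really is $c_{11}$ times the $(i,j)$ entry of $D - c_{11}^{-1} s r$ (a one-line check), and correctly counting the power of $c_{11}$ extracted row-by-row from $\widetilde C$ (there are $p-1$ rows, giving $c_{11}^{\,p-1}$, against one factor of $c_{11}$ from the block determinant formula, netting $c_{11}^{\,p-2}$). The hypothesis $p\geq 3$ is what makes the exponent $p-2\geq 1$ meaningful, though the identity is in fact valid for $p\geq 1$ under the convention $c_{11}^{0}=1$; Lemma \ref{leal1} will apply this with the bordered matrices of size $n$, and the iteration of this reduction is what ultimately yields the Sylvester identity $(*)$.
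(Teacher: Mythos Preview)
Your argument is correct and is essentially the same as the paper's: the paper also assumes $c_{11}\neq 0$, row-reduces the first column to recognize the lower-right block as $D - c_{11}^{-1}sr$, and reads off $|C| = c_{11}^{-(p-2)}|\widetilde C|$. The only cosmetic difference is that the paper disposes of $c_{11}=0$ by the direct observation that then $\widetilde C = -s\,r$ has rank at most one (so $|\widetilde C|=0$ since $p-1\geq 2$), whereas you invoke the polynomial-identity/Zariski-density argument; both are fine.
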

\begin{proof}
When $c_{11}=0,$ (\ref{e4.10}) holds since both sides equal $0.$ Now assume $c_{11} \neq 0.$
By eliminating $c_{21},\cdots,c_{p1},$ we get,

$|C|=\left|\begin{array}{cccc}
       c_{11} & c_{12} & \cdots & c_{1p} \\
       0 & c_{22}-c_{12}\frac{c_{21}}{c_{11}} & \cdots & c_{2p}-c_{1p}\frac{c_{21}}{c_{11}} \\
       \cdots & \cdots & \cdots & \cdots \\
       0 & c_{p2}-c_{12}\frac{c_{p1}}{c_{11}} & \cdots & c_{pp}-c_{1p}\frac{c_{p1}}{c_{11}}
     \end{array}\right|={c_{11}}^{-(p-2)}|\widetilde{C}|.
$
\end{proof}

\bigskip

\begin{lemma}\label{le46}
If the determinant of a~ $3 \times 3$ matrix
$$\left|\begin{array}{ccc}
    a_{11} & a_{12} & a_{13} \\
    a_{21} & a_{22} & a_{23} \\
    a_{31} & a_{32} & a_{33}
  \end{array}\right|=0,
$$
where $a_{ij} \in \mathbb{C}$ for all $1 \leq i,j \leq 3.$ Then

$$\left|\begin{array}{cc}
    \left|\begin{array}{cc}
       a_{11} & a_{12} \\
       a_{21} & a_{22}
     \end{array}\right| & \left|\begin{array}{cc}
       a_{11} & a_{13} \\
       a_{21} & a_{23}
     \end{array}\right| \\
      &   \\
    \left|\begin{array}{cc}
       a_{11} & a_{12} \\
       a_{31} & a_{32}
     \end{array}\right|  & \left|\begin{array}{cc}
       a_{11} & a_{13} \\
       a_{31} & a_{33}
     \end{array}\right|
  \end{array}\right|=
\left|\begin{array}{cc}
    \left|\begin{array}{cc}
       a_{11} & a_{12} \\
       a_{21} & a_{22}
     \end{array}\right| & \left|\begin{array}{cc}
       a_{12} & a_{13} \\
       a_{22} & a_{23}
     \end{array}\right|  \\
      &   \\
    \left|\begin{array}{cc}
       a_{11} & a_{12} \\
       a_{31} & a_{32}
     \end{array}\right| & \left|\begin{array}{cc}
       a_{12} & a_{13} \\
       a_{32} & a_{33}
     \end{array}\right|
  \end{array}\right|=$$

$$~~~\left|\begin{array}{cc}
    \left|\begin{array}{cc}
       a_{11} & a_{12} \\
       a_{21} & a_{22}
     \end{array}\right| & \left|\begin{array}{cc}
       a_{11} & a_{13} \\
       a_{21} & a_{23}
     \end{array}\right| \\
      &    \\
    \left|\begin{array}{cc}
       a_{21} & a_{22} \\
       a_{31} & a_{32}
     \end{array}\right| & \left|\begin{array}{cc}
       a_{21} & a_{23} \\
       a_{31} & a_{33}
     \end{array}\right|
  \end{array}\right|=
  \left|\begin{array}{cc}
    \left|\begin{array}{cc}
       a_{11} & a_{12} \\
       a_{21} & a_{22}
     \end{array}\right| & \left|\begin{array}{cc}
       a_{12} & a_{13} \\
       a_{22} & a_{23}
     \end{array}\right| \\
      &    \\
    \left|\begin{array}{cc}
       a_{21} & a_{22} \\
       a_{31} & a_{32}
     \end{array}\right| & \left|\begin{array}{cc}
       a_{22} & a_{23} \\
       a_{32} & a_{33}
     \end{array}\right|
  \end{array}\right|=0.$$
\end{lemma}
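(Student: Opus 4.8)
The plan is to deduce the lemma from Lemma \ref{le45} (in the boundary case $p=3$), applied to $A$ itself and to three matrices obtained from $A$ by transposing its first two rows and/or its first two columns. Let $D_{1},D_{2},D_{3},D_{4}$ denote the four $2\times2$ ``determinant of determinants'' appearing in the statement, in the order they are written. I claim that in fact
\[
D_{1}=a_{11}\det A,\qquad D_{2}=a_{12}\det A,\qquad D_{3}=a_{21}\det A,\qquad D_{4}=a_{22}\det A,
\]
so that the hypothesis $\det A=0$ forces $D_{1}=D_{2}=D_{3}=D_{4}=0$; in particular all four are equal, which is the assertion.

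The identity $D_{1}=a_{11}\det A$ is precisely Lemma \ref{le45} with $C=A$: for a $3\times3$ matrix that lemma reads $c_{11}\,|C|=|\widetilde C|$, and by construction the matrix $\widetilde C$ attached to $A$ is exactly the $2\times2$ matrix whose determinant is $D_{1}$. For $D_{4}$, I would apply Lemma \ref{le45} to the matrix $R$ obtained from $A$ by interchanging rows $1$ and $2$ and also columns $1$ and $2$; then $\det R=\det A$ and $r_{11}=a_{22}$, and a short check shows that $|\widetilde R|=D_{4}$, the sign changes produced by the two transpositions inside each $2\times2$ minor cancelling in pairs. Hence $D_{4}=|\widetilde R|=r_{11}\det R=a_{22}\det A$. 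Likewise, $D_{2}$ is obtained by transposing only columns $1$ and $2$ of $A$, and $D_{3}$ by transposing only rows $1$ and $2$: in these two cases the permuted matrix has determinant $-\det A$, but identifying its condensed matrix with the block matrix defining $D_{2}$, resp.\ $D_{3}$, introduces a compensating sign, so the net result is again $D_{2}=a_{12}\det A$ and $D_{3}=a_{21}\det A$.

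Since $\det A=0$, all four determinants vanish and the lemma follows. The only point that needs care is the index-and-sign bookkeeping in the previous paragraph---matching each $2\times2$ block of $D_{2},D_{3},D_{4}$ with the corresponding block of the condensed matrix of the permuted $A$---but since the hypothesis forces every quantity in sight to be zero, the signs are in the end immaterial, so this is not a genuine obstacle. Alternatively, each of the four displayed identities $D_{i}=(\text{entry of }A)\cdot\det A$ is an instance of the Desnanot--Jacobi determinant identity and can be verified by a direct expansion of the $2\times2$ determinant of $2\times2$ minors.
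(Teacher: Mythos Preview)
Your proposal is correct and follows essentially the same approach as the paper: the paper's proof simply states the four identities $D_{1}=a_{11}\det A$, $D_{2}=a_{12}\det A$, $D_{3}=a_{21}\det A$, $D_{4}=a_{22}\det A$ with the justification ``Using Lemma~\ref{le45}'', and you supply the explicit row/column permutation argument that reduces each of the last three to a direct application of that lemma. Your sign bookkeeping is in fact correct (not merely immaterial), so the identities hold as written even without the hypothesis $\det A=0$.
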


\begin{proof}
Using Lemma $4.5$,
$$\left|\begin{array}{cc}
    \left|\begin{array}{cc}
       a_{11} & a_{12} \\
       a_{21} & a_{22}
     \end{array}\right| & \left|\begin{array}{cc}
       a_{11} & a_{13} \\
       a_{21} & a_{23}
     \end{array}\right| \\
      &   \\
    \left|\begin{array}{cc}
       a_{11} & a_{12} \\
       a_{31} & a_{32}
     \end{array}\right|  & \left|\begin{array}{cc}
       a_{11} & a_{13} \\
       a_{31} & a_{33}
     \end{array}\right|
  \end{array}\right|=a_{11}\left|\begin{array}{ccc}
    a_{11} & a_{12} & a_{13} \\
    a_{21} & a_{22} & a_{23} \\
    a_{31} & a_{32} & a_{33}
  \end{array}\right|,$$

$$\left|\begin{array}{cc}
    \left|\begin{array}{cc}
       a_{11} & a_{12} \\
       a_{21} & a_{22}
     \end{array}\right| & \left|\begin{array}{cc}
       a_{12} & a_{13} \\
       a_{22} & a_{23}
     \end{array}\right|  \\
      &   \\
    \left|\begin{array}{cc}
       a_{11} & a_{12} \\
       a_{31} & a_{32}
     \end{array}\right| & \left|\begin{array}{cc}
       a_{12} & a_{13} \\
       a_{32} & a_{33}
     \end{array}\right|
  \end{array}\right|=a_{12}\left|\begin{array}{ccc}
    a_{11} & a_{12} & a_{13} \\
    a_{21} & a_{22} & a_{23} \\
    a_{31} & a_{32} & a_{33}
  \end{array}\right|,$$

$$\left|\begin{array}{cc}
    \left|\begin{array}{cc}
       a_{11} & a_{12} \\
       a_{21} & a_{22}
     \end{array}\right| & \left|\begin{array}{cc}
       a_{11} & a_{13} \\
       a_{21} & a_{23}
     \end{array}\right| \\
      &    \\
    \left|\begin{array}{cc}
       a_{21} & a_{22} \\
       a_{31} & a_{32}
     \end{array}\right| & \left|\begin{array}{cc}
       a_{21} & a_{23} \\
       a_{31} & a_{33}
     \end{array}\right|
  \end{array}\right|=a_{21}\left|\begin{array}{ccc}
    a_{11} & a_{12} & a_{13} \\
    a_{21} & a_{22} & a_{23} \\
    a_{31} & a_{32} & a_{33}
  \end{array}\right|,$$

$$\left|\begin{array}{cc}
    \left|\begin{array}{cc}
       a_{11} & a_{12} \\
       a_{21} & a_{22}
     \end{array}\right| & \left|\begin{array}{cc}
       a_{12} & a_{13} \\
       a_{22} & a_{23}
     \end{array}\right| \\
      &    \\
    \left|\begin{array}{cc}
       a_{21} & a_{22} \\
       a_{31} & a_{32}
     \end{array}\right| & \left|\begin{array}{cc}
       a_{22} & a_{23} \\
       a_{32} & a_{33}
     \end{array}\right|
  \end{array}\right|=a_{22}\left|\begin{array}{ccc}
    a_{11} & a_{12} & a_{13} \\
    a_{21} & a_{22} & a_{23} \\
    a_{31} & a_{32} & a_{33}
  \end{array}\right|.$$
\end{proof}

\textbf{Proof of Lemma \ref{leal1} :} We proceed by induction on the dimension of $B.$ From Lemma \ref{le46},
 we know Lemma \ref{leal1} holds for $n=3.$ Now assume that it holds when the dimension of $B$ is
 less than or equal to $n-1.$ To prove it when the dimension is $n$, it is enough to show it for the case when $i_{1}=1,i_{2}=2,\cdots,i_{n-2}=n-2$ and $j_{1}=1,j_{2}=2,\cdots,j_{n-2}=n-2.$
 Namely, we show that

$$\left|\begin{array}{cc}
    \left|\begin{array}{cccccc}
      b_{11} & b_{12}  & . & . & . & b_{1n-1} \\
      b_{21} & b_{22} & . & . & . & b_{2n-1} \\
      . & . & . & . & . & . \\
      . & . & . & . & . & . \\
      . & . & . & . & . & . \\
      b_{n-11} & b_{n-1 2} & . & . & . & b_{n-1 n-1}
    \end{array}\right|
    & \left|\begin{array}{cccccc}
         b_{11} & . & . & . & b_{1 n-2} & b_{1n} \\
         b_{21} & . & . & . & b_{2 n-2} & b_{2n} \\
         . & . & . & . & . & . \\
         . & . & . & . & . & . \\
         . & . & . & . & . & . \\
         b_{n-11} & . & . & . & b_{n-1 n-2} & b_{n-1 n}
       \end{array}\right| \\

     &  \\
     &  \\

    \left|\begin{array}{cccccc}
      b_{11} & b_{12}  & . & . & . & b_{1n-1} \\
      . & . & . & . & . & . \\
      . & . & . & . & . & . \\
      . & . & . & . & . & . \\
      b_{n-2 1} & b_{n-2 2} & . & . & . & b_{n-2 n-1} \\
      b_{n1} & b_{n2} & . & . & . & b_{n n-1}
    \end{array}\right|&
    \left|\begin{array}{cccccc}
         b_{11} & . & . & . & b_{1 n-2} & b_{1n} \\
         . & . & . & . & . & . \\
         . & . & . & . & . & . \\
         . & . & . & . & . & . \\
         b_{n-2 1} & . & . & . & b_{n-2 n-2} & b_{n-2 n} \\
         b_{n1} & . & . & . & b_{nn-2} & b_{nn}
       \end{array}\right|
  \end{array}\right|
$$

$=B(\begin{array}{cccc}
      1 & 2 & \cdots & n-2 \\
      1 & 2 & \cdots & n-2
    \end{array}
)|B|,$ and the other cases are similar.

Now we view all terms here as rational functions in $b_{11},\cdots,b_{nn}.$ By Lemma \ref{le45},
\begin{equation}\label{e4.11}
|B|={b_{11}}^{-(n-2)}\left|\begin{array}{ccc}
                        B(\begin{array}{cc}
                            1 & 2 \\
                            1 & 2
                          \end{array}
                        ) & \cdots & B(\begin{array}{cc}
                                         1 & 2 \\
                                         1 & n
                                       \end{array}
                        ) \\
                        \cdots & \cdots & \cdots \\
                        B(\begin{array}{cc}
                            1 & n \\
                            1 & 2
                          \end{array}
                        ) & \cdots & B(\begin{array}{cc}
                                         1 & n \\
                                         1 & n
                                       \end{array}
                        )
                      \end{array}\right|
\end{equation}
By applying Lemma \ref{le45} and the induction hypothesis, it follows that

$$\left|\begin{array}{ccc}
                        B(\begin{array}{cc}
                            1 & 2 \\
                            1 & 2
                          \end{array}
                        ) & \cdots & B(\begin{array}{cc}
                                         1 & 2 \\
                                         1 & n
                                       \end{array}
                        ) \\
                        \cdots & \cdots & \cdots \\
                        B(\begin{array}{cc}
                            1 & n \\
                            1 & 2
                          \end{array}
                        ) & \cdots & B(\begin{array}{cc}
                                         1 & n \\
                                         1 & n
                                       \end{array}
                        )
                      \end{array}\right|=\left(B(\begin{array}{cc}
                                             1 & 2 \\
                                             1 & 2
                                           \end{array}
                      )\right)^{-(n-3)}{b_{11}}^{n-2}\left|\begin{array}{ccc}
                                             B(\begin{array}{ccc}
                                                 1 & 2 & 3 \\
                                                 1 & 2 & 3
                                               \end{array}
                                             )&\cdots  & B(\begin{array}{ccc}
                                                             1 & 2 & 3 \\
                                                             1 & 2 & n
                                                           \end{array}
                                             ) \\
                                             \cdots & \cdots & \cdots \\
                                             B(\begin{array}{ccc}
                                                 1 & 2 & n \\
                                                 1 & 2 & 3
                                               \end{array}
                                             ) & \cdots & B(\begin{array}{ccc}
                                                             1 & 2 & n \\
                                                             1 & 2 & n
                                                           \end{array})
                                           \end{array}\right|.
                      $$
Combining it with (\ref{e4.11}), we obtain
$$|B|=\left(B(\begin{array}{cc}
                                             1 & 2 \\
                                             1 & 2
                                           \end{array}
                      )\right)^{-(n-3)}\left|\begin{array}{ccc}
                                             B(\begin{array}{ccc}
                                                 1 & 2 & 3 \\
                                                 1 & 2 & 3
                                               \end{array}
                                             )&\cdots  & B(\begin{array}{ccc}
                                                             1 & 2 & 3 \\
                                                             1 & 2 & n
                                                           \end{array}
                                             ) \\
                                             \cdots & \cdots & \cdots \\
                                             B(\begin{array}{ccc}
                                                 1 & 2 & n \\
                                                 1 & 2 & 3
                                               \end{array}
                                             ) & \cdots & B(\begin{array}{ccc}
                                                             1 & 2 & n \\
                                                             1 & 2 & n
                                                           \end{array})
                                           \end{array}\right|.$$
By further applications of Lemma \ref{le45} and the induction
hypothesis as above, we arrive at the conclusion.

\medskip
Finally we state the following simple lemma:
\medskip

\begin{lemma}\label{leal2}
 Let ${\bf{b}}_{1},\cdots,{\bf{b}}_{n}$ and ${\bf{a}}$ be  $n$-dimensional column vectors with entries in $\mathbb{C}$, and let $B=({\bf{b}}_{1},\cdots,{\bf{b}}_{n})$ denote the $n\times n$ matrix.  Assume that $\mathrm{det}B \neq 0,$ and that
 $\mathrm{det}({\bf{b}}_{i_{1}},{\bf{b}}_{i_{2}},\cdots,{\bf{b}}_{i_{n-1}},{\bf{a}})=0$ for any $1 \leq i_{1} < i_{2} < \cdots < i_{n-1} \leq n.$ Then $\bf{a}=0,$ where $\bf{0}$ is the $n$-dimensional zero column vector.
\end{lemma}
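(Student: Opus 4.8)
The plan is to exploit that $\det B\neq 0$ forces $\{\mathbf{b}_{1},\dots,\mathbf{b}_{n}\}$ to be a basis of $\mathbb{C}^{n}$, so that $\mathbf{a}$ admits a (unique) expansion $\mathbf{a}=\sum_{j=1}^{n}c_{j}\mathbf{b}_{j}$ with $c_{j}\in\mathbb{C}$; the whole task then reduces to showing that every $c_{j}$ vanishes, which will be read off directly from the hypothesis.

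First I would fix $k$ with $1\le k\le n$ and apply the vanishing hypothesis to the index set $\{1,\dots,n\}\setminus\{k\}=\{i_{1}<\cdots<i_{n-1}\}$. Using multilinearity of the determinant in its last column,
$$0=\det(\mathbf{b}_{i_{1}},\dots,\mathbf{b}_{i_{n-1}},\mathbf{a})=\sum_{j=1}^{n}c_{j}\,\det(\mathbf{b}_{i_{1}},\dots,\mathbf{b}_{i_{n-1}},\mathbf{b}_{j}).$$
Every summand with $j\neq k$ has a repeated column and therefore vanishes, while the summand with $j=k$ equals $\varepsilon_{k}\,c_{k}\det B$ for a sign $\varepsilon_{k}=\pm 1$ (coming from permuting the last column back into the $k$-th slot, so that the matrix becomes $B$ itself). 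Since $\det B\neq 0$, this forces $c_{k}=0$. As $k$ was arbitrary, all the coefficients vanish and hence $\mathbf{a}=\mathbf{0}$.

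There is essentially no obstacle in this argument; equivalently one may invoke Cramer's rule, noting that $c_{k}$ is exactly the $k$-th component of the unique solution of $B\mathbf{x}=\mathbf{a}$ and equals $\det(B_{k})/\det B$, where $B_{k}$ is $B$ with its $k$-th column replaced by $\mathbf{a}$, a matrix whose determinant agrees up to sign with one of the determinants assumed to be zero. The only point needing a line of care is the bookkeeping of the sign/column permutation when identifying $\det(\mathbf{b}_{i_{1}},\dots,\mathbf{b}_{i_{n-1}},\mathbf{b}_{k})$ with $\pm\det B$, and this is immaterial since only the non-vanishing of $\det B$ is used.
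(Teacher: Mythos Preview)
Your proof is correct and follows essentially the same approach as the paper: expand $\mathbf{a}$ in the basis $\{\mathbf{b}_1,\dots,\mathbf{b}_n\}$ and use the vanishing-determinant hypothesis to kill each coefficient. You simply give more detail (the multilinearity step and the Cramer's rule reformulation) than the paper, which leaves the coefficient computation to the reader.
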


\begin{proof}
Note that $\{{\bf{b}}_{1},\cdots,{\bf{b}}_{n}\}$ is a linearly independent set in $\mathbb{C}^{n}.$  Write ${\bf{a}}=\sum_{j=1}^{n}\lambda_{j} {\bf{b}}_{j}$ for some $\lambda_{j} \in \mathbb{C}, 1 \leq j \leq n.$ It is easy to see that all the $\lambda_{j}=0$ by using the assumption that $\mathrm{det}({\bf{b}}_{i_{1}},{\bf{b}}_{i_{2}},\cdots,{\bf{b}}_{i_{n-1}},{\bf{a}})=0, \,\forall 1 \leq i_{1} < i_{2} < \cdots < i_{n-1} \leq n.$
\end{proof}

Theorem \ref{thmb} will follow from:

\begin{theorem}\label{thsmth}
Let $M,M',F$ be as in Theorem 2.5 and $p \in M$ be a point with $\mathrm{rank}_l(F,p)=n+l$ for some $1\leq l\leq k-1$. Assume that in some neighborhood $O$ of $p$, $\mathrm{rank}_{l+1}(F,q)=n+l$ for all $q \in O$.  Then F is smooth near $p.$
\end{theorem}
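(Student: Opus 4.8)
The plan is to reduce Theorem~\ref{thsmth} to the situation treated in the proof of Theorem~\ref{thm12}, by producing near $p$, in suitable target coordinates, a reflection identity of the form (\ref{eqn219}) for $F$. First I would arrange that the rank is locally constant. Since $q\mapsto\mathrm{rank}_l(F,q)$ is lower semicontinuous with value $n+l$ at $p$, while $\mathrm{rank}_{l+1}(F,q)=n+l\ge\mathrm{rank}_l(F,q)$ on $O$ by hypothesis, one gets $\mathrm{rank}_l(F,q)=n+l$ on a neighborhood $O'$ of $p$; differentiating the dependence relations $L^\alpha\mathbf{a}=\sum_{|\gamma|\le l}c_{\alpha,\gamma}L^\gamma\mathbf{a}$ ($|\alpha|=l+1$) that come from $E_{l+1}(q)=E_l(q)$, one checks $E_j(q)=E_l(q)$ for all $l\le j\le k$ and all $q\in O'$. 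I would then apply the normalization Lemma~\ref{lenorm} at $p$: after a unitary change of coordinates in $\mathbb{C}^{n+k}$ we may assume $\mathbf{a}|_p=(0,\dots,0,\tfrac{\sqrt{-1}}{2})$ and that the $(n+l)\times(n+k)$ matrix with rows $\mathbf{a}|_p,\ L_i\mathbf{a}|_p\ (1\le i\le n),\ L^{\beta_j}\mathbf{a}|_p$ has block form $(\,\mathbf{B}_{n+l-1}\mid\mathbf{0}\mid\mathbf{b}\,)$ with $\mathbf{B}_{n+l-1}$ invertible. Thus $E_l(p)=\{v\in\mathbb{C}^{n+k}:v_{n+l}=\dots=v_{n+k-1}=0\}$; I call $z_1,\dots,z_{n+l-1},z_{n+k}$ the \emph{essential} coordinates and $z_{n+l},\dots,z_{n+k-1}$ the \emph{degenerate} ones, and note that by continuity the $(n+l)\times(n+l)$ essential minor of the displayed matrix remains nonzero on a smaller neighborhood $O''$ of $p$.

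The heart of the argument is to show that the degenerate components of $F$ are smooth functions of the essential data. For every $q\in O''$ and every $|\alpha|\le k$ one has $L^\alpha\mathbf{a}(q)\in E_l(q)$, so all $(n+l+1)\times(n+l+1)$ minors of the jet matrix $\bigl(L^\beta\mathbf{a}(q)\bigr)_{|\beta|\le k}$ vanish. Feeding this rank information and the nonvanishing of the essential minor into the Sylvester-type determinant identities of Lemmas~\ref{le45}--\ref{leal1} and the extraction Lemma~\ref{leal2}, one expresses the degenerate components of each $L^\alpha\mathbf{a}(q)$ --- in particular $a_{n+l}(q),\dots,a_{n+k-1}(q)$ themselves --- as universal smooth functions of the essential components of the $L^\gamma\mathbf{a}(q)$. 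Recalling that $a_m=\overline{F}_m+\phi^{*}_{z'_m}(F,\overline{F})$ for $m\le n+k-1$ and $a_{n+k}=\tfrac{\sqrt{-1}}{2}+\phi^{*}_{z'_{n+k}}(F,\overline{F})$ with $\phi^{*}=O(|Z'|^3)$, and using these relations together with their conjugates and the implicit function theorem, one converts this into: the degenerate components $F_{n+l},\dots,F_{n+k-1}$, their conjugates, and all their CR derivatives up to any fixed order, are smooth functions of $F_1,\dots,F_{n+l-1},F_{n+k}$, their conjugates, and the CR derivatives thereof.

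With this in hand I would apply Lamel's ``almost holomorphic'' implicit function theorem (Theorem~\ref{thimp}) to the invertible $(n+l)\times(n+l)$ essential subsystem of $\{L^{\alpha}\rho_{Z'}(F,\overline{F})|_p\}$, exactly as in the derivation of (\ref{eqn219}), to express $F_1,\dots,F_{n+l-1},F_{n+k}$ through $F,\overline{F}$ and finitely many CR jets of $\overline{F}$; substituting the expressions above for the degenerate components yields a closed identity
\[
F_j(z,s)=\Psi_j\!\left(F(z,s),\overline{F}(z,s),\bigl(L^\beta\overline{F}(z,s)\bigr)_{|\beta|\le k_0}\right),\qquad 1\le j\le n+k,
\]
with $k_0\le k$ and each $\Psi_j$ smooth and holomorphic in the jet variables --- precisely an identity of the form (\ref{eqn219}). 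Since $M$ satisfies the Levi form hypothesis of Theorem~\ref{thm}, Theorem~\ref{thmn29} provides an acute open convex cone $\Gamma\subset\mathbb{R}^d$ ($d$ the CR codimension of $M$) with $\mathrm{WF}(F_j)|_q\subset\Gamma$ for $q$ near $p$ and all $j$. The remainder of the proof of Theorem~\ref{thm12} then applies verbatim: construct almost holomorphic extensions of the $F_j$ into a wedge over $\Gamma$, feed them into $\Psi_j$ to get a boundary value of $F_j$ from the opposite wedge (edge $M$, direction $-\Gamma$), and deduce from Theorem~V.3.7 in [BCH] that $\mathrm{WF}(F_j)|_q\cap\Gamma=\emptyset$; combined with $\mathrm{WF}(F_j)|_q\subset\Gamma$ this forces $F\in C^\infty$ in a neighborhood of $p$.

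The step I expect to be the main obstacle is the second one: turning the qualitative stabilization $E_{l+1}=E_l$ into explicit, sufficiently differentiable formulas for the degenerate components and their CR derivatives in terms of the essential data. The combinatorial Lemmas~\ref{le45}--\ref{leal2} are designed for exactly this, but one must keep careful track of the loss of derivatives --- the coefficients produced by solving the linear systems are a priori only $C^{k-l-1}$ --- to be sure the final identity uses only $|\beta|\le k_0$ with $k_0\le k$, so that the $C^k$ regularity of $F$ suffices to run the Section~3 argument.
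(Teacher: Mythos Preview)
Your outline follows the paper's architecture (normalize via Lemma~\ref{lenorm}, exploit the rank deficiency, build a reflection identity, apply the wavefront argument of Section~3), but the middle step contains a real gap and misidentifies the role of the determinant lemmas.

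You claim that Lemmas~\ref{leal1}--\ref{leal2} let you ``express the degenerate components of each $L^{\alpha}\mathbf a$ as universal smooth functions of the essential components of the $L^{\gamma}\mathbf a$.'' This is not what they do, and in fact no such expression exists: by Cramer's rule the coefficients expressing the degenerate column $(L^{\beta_t}a_j)_t$ in the essential basis are quotients of determinants whose \emph{numerators contain the degenerate column} $a_j$. What the Sylvester identity (Lemma~\ref{leal1}) is actually used for in the paper is the Claim at (\ref{eqlmatz}): when you apply $L_\nu$ to such a quotient, the numerator becomes a sum of $2\times 2$ determinants of $(n+l)$--minors, each of which Lemma~\ref{leal1} rewrites as an $(n+l-1)$--minor times an $(n+l+1)$--minor of the form (\ref{eqmatde}), and the latter vanish by the rank hypothesis. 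The conclusion is that the quotients $G_i^j$ are \emph{CR functions}. That CR--ness is the crux, not an incidental bookkeeping fact.

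This matters because the identity you need for the degenerate $F_j$ is (\ref{eqneG}), obtained by \emph{conjugating} (\ref{eqzaj}); it involves $\overline{G_i^j}$, not $G_i^j$. As explicit functions these $\overline{G_i^j}$ depend on $\overline{L}^{\,\beta}F$, not on $L^\beta\overline F$, so they cannot be absorbed into the jet variables $(L^\beta\overline F)$ to produce a closed identity of the form you wrote. The paper instead treats the $\overline{G_i^j}$ as \emph{additional} $W$--variables in $\Psi(Z',\overline{Z'},W)$ (polynomial in $W$), adjoins the $k-l$ equations (\ref{eqneG}) to the $n+l$ equations (\ref{eqFde})--(\ref{eqlFd}), checks that $\Psi_{Z'}$ is invertible at $p$, and applies Theorem~\ref{thimp} to the full $(n+k)\times(n+k)$ system. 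Your ``solve the essential $(n+l)\times(n+l)$ subsystem and then substitute'' does not work as stated, since equations (\ref{eqFde})--(\ref{eqlFd}) involve all $n+k$ components of $F$.

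Finally, the CR--ness of $G_i^j$ is exactly what makes the reflection go through: by Theorem~\ref{thmn29} the $G_i^j$ extend almost holomorphically into the same wedge $\Gamma_1$ as the $F_m$, so $\overline G(z,s,-t)$ is available for $t\in -\Gamma_1$ when you define $h_j$. Without establishing $L_\nu G_i^j=0$ you have no mechanism to extend $\overline G$ to the opposite wedge, and the final edge--of--the--wedge step collapses. (Your worry about $C^{k-l-1}$ regularity is a red herring; the paper only needs $G\in C^{k-l}$, which is fine since all differentiations used have total order at most $l$.)
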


\begin{proof}
 Assume $p=0$. Applying Lemma \ref{lenorm}, after a suitable holomorphic change of coordinates in $\mathbb{C}^{n+k},$ there exist multiindices $\{\beta_{n+1},\cdots,\beta_{n+l-1}\}$ with $1 < |\beta_{i}| \leq l~\text{for all}~n \leq i \leq n+l-1$ satisfying
\begin{equation}\label{eqbno}
{\bf{a}}|_{p}=(0,\cdots,0,\frac{\sqrt{-1}}{2}),
\left(\begin{array}{c}
  L_{1}{{\bf{a}}}|_{p} \\
  \cdots \\
  L_{n}{{\bf{a}}}|_{p} \\
  L^{\beta_{n+1}}{{\bf{a}}}|_{p}  \\
  \cdots \\
  L^{\beta_{n+l-1}}{{\bf{a}}}|_{p}
\end{array}\right)=\left(\begin{array}{ccc}
                     {\bf{B}}_{n+l-1} & {\bf{0}} & {\bf{b}}
                   \end{array}\right).
\end{equation}
Here ${\bf{B}}_{n+l-1}$ is an invertible $(n+l-1)\times (n+l-1)$
matrix, ${\bf{0}}$ is an $(n+l-1) \times(k-l)$ zero matrix,
${\bf{b}}$ is an $(n+l-1)-$dimensional column vector. From equation
(\ref{eqbno}),  we know that

\begin{equation}\label{eqalnde}
\left|\begin{array}{cccc}
  a_{1} & \cdots & a_{n+l-1} & a_{n+k} \\
   L_{1}a_{1} & \cdots & L_{1}a_{n+l-1} & L_{1}a_{n+k} \\
  \cdots & \cdots & \cdots & \cdots \\
  L_{n}a_{1} & \cdots & L_{n}a_{n+l-1} & L_{n}a_{n+k} \\
  L^{\beta_{n+1}}a_{1} & \cdots & L^{\beta_{n+1}}a_{n+l-1} & L^{\beta_{n+1}}a_{n+k}  \\
  \cdots & \cdots & \cdots & \cdots \\
  L^{\beta_{n+l-1}}a_{1} & \cdots & L^{\beta_{n+l-1}}a_{n+l-1} & L^{\beta_{n+l-1}}a_{n+k}
\end{array}\right| \neq 0~\text{at}~p.
\end{equation}
To simplify the notation, we denote the $n-$dimensional multiindices by $\beta_{0}=(0,\cdots,0),$ and  $\beta_{\mu}=(0,\cdots,0,1,0,\cdots,0),$ for $\mu=1,\cdots,n,$ where $1$ is at the $\mu^{\rm th}$ position. That is, $L^{\beta_{\mu}}=L_{\mu},\mu=1,\cdots,n.$ Then inequality (\ref{eqalnde}) can be written as
\begin{equation}\label{eqnondege}
\left|\begin{array}{cccc}
  L^{\beta_{0}}a_{1} & \cdots & L^{\beta_{0}}a_{n+l-1} & L^{\beta_{0}}a_{n+k} \\
  \cdots & \cdots & \cdots & \cdots \\
  L^{\beta_{n+l-1}}a_{1} & \cdots & L^{\beta_{n+l-1}}a_{n+l-1} & L^{\beta_{n+l-1}}a_{n+k}
\end{array}\right| \neq 0~\text{at}~p.
\end{equation}
By shrinking $O$ if necessary, it is nonzero everywhere in $O.$
Since $\mathrm{rank}_{l+1}(F,q) \leq n+l~\text{in}~O,$  we have
$$\mathrm{dim}_{\mathbb{C}}(E_{l+1}(q))=\mathrm{dim}_{\mathbb{C}}(\mathrm{Span}_{\mathbb{C}}\{(L^{\alpha}a_{1},\cdots,
L^{\alpha}a_{n+k})|_{q}:0 \leq |\alpha| \leq l+1\}) \leq n+l$$
everywhere in $O.$ Hence for any multiindex $\widetilde{\beta}$ with $0 \leq |\widetilde{\beta}| \leq l+1,$ and any $ n+l \leq j \leq n+k-1, $ we have, in $O,$
\begin{equation}\label{eqmatde}
\left|\begin{array}{ccccc}
  L^{\beta_{0}}a_{1} & \cdots & L^{\beta_{0}}a_{n+l-1} & L^{\beta_{0}}a_{n+k} & L^{\beta_{0}}a_{j} \\
  \cdots & \cdots & \cdots & \cdots & \cdots \\
  L^{\beta_{n+l-1}}a_{1} & \cdots & L^{\beta_{n+l-1}}a_{n+l-1} & L^{\beta_{n+l-1}}a_{n+k} & L^{\beta_{n+l-1}}a_{j} \\
  L^{\widetilde{\beta}}a_{1} & \cdots & L^{\widetilde{\beta}}a_{n+l-1} & L^{\widetilde{\beta}}a_{n+k} & L^{\widetilde{\beta}}a_{j}
\end{array}\right|\equiv 0.
\end{equation}
 Furthermore, we will prove the following claim.

{\bf{Claim}:} For any $1 \leq \nu \leq n, n+l \leq j \leq  n+k-1, $ and $i_{1}<i_{2}<\cdots<i_{n+l-1}$ with $\{i_{1},\cdots,i_{n+l-1}\} \subset \{1,\cdots,n+l-1,n+k\},$ the following holds in $O:$
\begin{equation}\label{eqlmatz}
L_{\nu}\left(\frac{\left|\begin{array}{cccc}
               L^{\beta_{0}}a_{i_{1}} & \cdots & L^{\beta_{0}}a_{i_{n+l-1}} & L^{\beta_{0}}a_{j}\\
               L^{\beta_{1}}a_{i_{1}} & \cdots & L^{\beta_{1}}a_{i_{n+l-1}} & L^{\beta_{1}}a_{j}\\
               \cdots & \cdots & \cdots & \cdots \\
               L^{\beta_{n+l-1}}a_{i_{1}} & \cdots & L^{\beta_{n+l-1}}a_{i_{n+l-1}} & L^{\beta_{n+l-1}}a_{j}
             \end{array}\right|
}{\left|\begin{array}{cccc}
    L^{\beta_{0}}a_{1} & \cdots & L^{\beta_{0}}a_{n+l-1} & L^{\beta_{0}}a_{n+k} \\
    \cdots & \cdots & \cdots & \cdots \\
    L^{\beta_{n+l-1}}a_{1} & \cdots & L^{\beta_{n+l-1}}a_{n+l-1} & L^{\beta_{n+l-1}}a_{n+k}
  \end{array}\right|
}\right)\equiv 0.
\end{equation}

\begin{proof}
By the quotient rule,
$$\text{the numerator of}~\left( L_{\nu}\left(\frac{\left|\begin{array}{cccc}
               L^{\beta_{0}}a_{i_{1}} & \cdots & L^{\beta_{0}}a_{i_{n+l-1}} & L^{\beta_{0}}a_{j}\\
               L^{\beta_{1}}a_{i_{1}} & \cdots & L^{\beta_{1}}a_{i_{n+l-1}} & L^{\beta_{1}}a_{j}\\
               \cdots & \cdots & \cdots & \cdots \\
               L^{\beta_{n+l-1}}a_{i_{1}} & \cdots & L^{\beta_{n+l-1}}a_{i_{n+l-1}} & L^{\beta_{n+l-1}}a_{j}
             \end{array}\right|
}{\left|\begin{array}{cccc}
    L^{\beta_{0}}a_{1} & \cdots & L^{\beta_{0}}a_{n+l-1} & L^{\beta_{0}}a_{n+k} \\
    \cdots & \cdots & \cdots & \cdots \\
    L^{\beta_{n+l-1}}a_{1} & \cdots & L^{\beta_{n+l-1}}a_{n+l-1} & L^{\beta_{n+l-1}}a_{n+k}
  \end{array}\right|}\right)\right)=$$

\begin{frame}
\footnotesize
\arraycolsep=1pt
\medmuskip = 1mu
\[
\left|\begin{array}{@{}cc@{}}
    \left|\begin{array}{cccc}
    L^{\beta_{0}}a_{1} & \cdots & L^{\beta_{0}}a_{n+l-1} & L^{\beta_{0}}a_{n+k} \\
    \cdots & \cdots & \cdots & \cdots \\
    L^{\beta_{n+l-1}}a_{1} & \cdots & L^{\beta_{n+l-1}}a_{n+l-1} & L^{\beta_{n+l-1}}a_{n+k}
  \end{array}\right| &~~~~\left|\begin{array}{cccc}
               L^{\beta_{0}}a_{i_{1}} & \cdots & L^{\beta_{0}}a_{i_{n+l-1}} & L^{\beta_{0}}a_{j}\\
               L^{\beta_{1}}a_{i_{1}} & \cdots & L^{\beta_{1}}a_{i_{n+l-1}} & L^{\beta_{1}}a_{j}\\
               \cdots & \cdots & \cdots & \cdots \\
               L^{\beta_{n+l-1}}a_{i_{1}} & \cdots & L^{\beta_{n+l-1}}a_{i_{n+l-1}} & L^{\beta_{n+l-1}}a_{j}
             \end{array}\right|
\\
    L_{\nu}\left|\begin{array}{cccc}
    L^{\beta_{0}}a_{1} & \cdots & L^{\beta_{0}}a_{n+l-1} & L^{\beta_{0}}a_{n+k} \\
    \cdots & \cdots & \cdots & \cdots \\
    L^{\beta_{n+l-1}}a_{1} & \cdots & L^{\beta_{n+l-1}}a_{n+l-1} & L^{\beta_{n+l-1}}a_{n+k}
  \end{array}\right| &~~L_{\nu}\left|\begin{array}{cccc}
               L^{\beta_{0}}a_{i_{1}} & \cdots & L^{\beta_{0}}a_{i_{n+l-1}} & L^{\beta_{0}}a_{j}\\
               L^{\beta_{1}}a_{i_{1}} & \cdots & L^{\beta_{1}}a_{i_{n+l-1}} & L^{\beta_{1}}a_{j}\\
               \cdots & \cdots & \cdots & \cdots \\
               L^{\beta_{n+l-1}}a_{i_{1}} & \cdots & L^{\beta_{n+l-1}}a_{i_{n+l-1}} & L^{\beta_{n+l-1}}a_{j}
             \end{array}\right|
  \end{array}\right|=
\]

\[
\left|\begin{array}{@{}cc@{}}
       \left|\begin{array}{cccc}
    L^{\beta_{0}}a_{1} & \cdots & L^{\beta_{0}}a_{n+l-1} & L^{\beta_{0}}a_{n+k} \\
    \cdots & \cdots & \cdots & \cdots \\
    L^{\beta_{n+l-1}}a_{1} & \cdots & L^{\beta_{n+l-1}}a_{n+l-1} & L^{\beta_{n+l-1}}a_{n+k}
  \end{array}\right|   & \left|\begin{array}{cccc}
               L^{\beta_{0}}a_{i_{1}} & \cdots & L^{\beta_{0}}a_{i_{n+l-1}} & L^{\beta_{0}}a_{j}\\
               L^{\beta_{1}}a_{i_{1}} & \cdots & L^{\beta_{1}}a_{i_{n+l-1}} & L^{\beta_{1}}a_{j}\\
               \cdots & \cdots & \cdots & \cdots \\
               L^{\beta_{n+l-1}}a_{i_{1}} & \cdots & L^{\beta_{n+l-1}}a_{i_{n+l-1}} & L^{\beta_{n+l-1}}a_{j}
             \end{array}\right| \\
           &  \\
        \left|\begin{array}{cccc}
          L_{\nu}L^{\beta_{0}}a_{1} & \cdots & L_{\nu}L^{\beta_{0}}a_{n+l-1} & L_{\nu}L^{\beta_{0}}a_{n+k} \\
          L^{\beta_{1}}a_{1} & \cdots &  L^{\beta_{1}}a_{n+l-1} &  L^{\beta_{1}}a_{n+k} \\
          \cdots & \cdots & \cdots & \cdots \\
          L^{\beta_{n+l-1}}a_{1} & \cdots & L^{\beta_{n+l-1}}a_{n+l-1} & L^{\beta_{n+l-1}}a_{n+k}
        \end{array}\right|
            & \left|\begin{array}{cccc}
               L_{\nu}L^{\beta_{0}}a_{i_{1}} & \cdots & L_{\nu}L^{\beta_{0}}a_{i_{n+l-1}} & L_{\nu}L^{\beta_{0}}a_{j}\\
               L^{\beta_{1}}a_{i_{1}} & \cdots & L^{\beta_{1}}a_{i_{n+l-1}} & L^{\beta_{1}}a_{j}\\
               \cdots & \cdots & \cdots & \cdots \\
               L^{\beta_{n+l-1}}a_{i_{1}} & \cdots & L^{\beta_{n+l-1}}a_{i_{n+l-1}} & L^{\beta_{n+l-1}}a_{j}
             \end{array}\right|
        \end{array}\right|+{\cdots}+
\]

\[
\left|\begin{array}{@{}cc@{}}
       \left|\begin{array}{cccc}
    L^{\beta_{0}}a_{1} & \cdots & L^{\beta_{0}}a_{n+l-1} & L^{\beta_{0}}a_{n+k} \\
    \cdots & \cdots & \cdots & \cdots \\
    L^{\beta_{n+l-1}}a_{1} & \cdots & L^{\beta_{n+l-1}}a_{n+l-1} & L^{\beta_{n+l-1}}a_{n+k}
  \end{array}\right|   & \left|\begin{array}{cccc}
               L^{\beta_{0}}a_{i_{1}} & \cdots & L^{\beta_{0}}a_{i_{n+l-1}} & L^{\beta_{0}}a_{j}\\
               L^{\beta_{1}}a_{i_{1}} & \cdots & L^{\beta_{1}}a_{i_{n+l-1}} & L^{\beta_{1}}a_{j}\\
               \cdots & \cdots & \cdots & \cdots \\
               L^{\beta_{n+l-1}}a_{i_{1}} & \cdots & L^{\beta_{n+l-1}}a_{i_{n+l-1}} & L^{\beta_{n+l-1}}a_{j}
             \end{array}\right| \\
           &  \\
        \left|\begin{array}{cccc}
          L^{\beta_{0}}a_{1} & \cdots & L^{\beta_{0}}a_{n+l-1} & L^{\beta_{0}}a_{n+k} \\
          \cdots & \cdots & \cdots  &  \cdots \\
          L^{\beta_{n+l-2}}a_{1} & \cdots & L^{\beta_{n+l-2}}a_{n+l-1} & L^{\beta_{n+l-2}}a_{n+k} \\
          L_{\nu}L^{\beta_{n+l-1}}a_{1} & \cdots & L_{\nu}L^{\beta_{n+l-1}}a_{n+l-1} & L_{\nu}L^{\beta_{n+l-1}}a_{n+k}
        \end{array}\right|    & \left|\begin{array}{cccc}
               L^{\beta_{0}}a_{i_{1}} & \cdots & L^{\beta_{0}}a_{i_{n+l-1}} & L^{\beta_{0}}a_{j}\\
               \cdots & \cdots & \cdots & \cdots \\
               L^{\beta_{n+l-2}}a_{i_{1}} & \cdots & L^{\beta_{n+l-2}}a_{i_{n+l-1}} & L^{\beta_{n+l-2}}a_{j} \\
               L_{\nu}L^{\beta_{n+l-1}}a_{i_{1}} & \cdots & L_{\nu}L^{\beta_{n+l-1}}a_{i_{n+l-1}} & L_{\nu}L^{\beta_{n+l-1}}a_{j}
             \end{array}\right|
        \end{array}\right|.
\]
\end{frame}

From equation (\ref{eqmatde}) and Lemma \ref{leal1}, we know each term on the right-hand side of the equation above equals $0.$ Hence equation (\ref{eqlmatz}) holds. This completes the proof of the claim.
\end{proof}

\bigskip

Thus the fraction in the parentheses in equation (\ref{eqlmatz})
equals a  $C^{k-l}$ CR
function in $O$. It follows that for any fixed $n+l \leq j \leq n+k-1,$ there
exist $C^{k-l}-$smooth CR functions
$G_{1}^{j},G_{2}^j,\cdots,G_{n+l-1}^j,G_{n+k}^j$ in $O,$ such
that, if $i_{1} < i_{2} < \cdots <i_{n+l-1}$ and
$(i_{1},i_{2},\cdots,i_{n+l-1})=(1,2,\cdots,\widehat{i_{0}},\cdots,n+l-1,n+k),i_{0}
\in\{1,2,\cdots,n+l-1,n+k\}$ (where
$(1,2,\cdots,\widehat{i_{0}},\cdots,n+l-1,n+k)$ means
$(1,2,\cdots,n+l-1,n+k)$ with the component $`` i_{0} "$ missing) then in $O,$

$$\left|\begin{array}{cccc}
               L^{\beta_{0}}a_{i_{1}} & \cdots & L^{\beta_{0}}a_{i_{n+l-1}} & L^{\beta_{0}}a_{j}\\
               L^{\beta_{1}}a_{i_{1}} & \cdots & L^{\beta_{1}}a_{i_{n+l-1}} & L^{\beta_{1}}a_{j}\\
               \cdots & \cdots & \cdots & \cdots \\
               L^{\beta_{n+l-1}}a_{i_{1}} & \cdots & L^{\beta_{n+l-1}}a_{i_{n+l-1}} & L^{\beta_{n+l-1}}a_{j}
             \end{array}\right|$$
$$=G_{i_{0}}^j\left|\begin{array}{cccc}
               L^{\beta_{0}}a_{i_{1}} & \cdots & L^{\beta_{0}}a_{i_{n+l-1}} & L^{\beta_{0}}a_{i_{0}}\\
               L^{\beta_{1}}a_{i_{1}} & \cdots & L^{\beta_{1}}a_{i_{n+l-1}} & L^{\beta_{1}}a_{i_{0}}\\
               \cdots & \cdots & \cdots & \cdots \\
               L^{\beta_{n+l-1}}a_{i_{1}} & \cdots & L^{\beta_{n+l-1}}a_{i_{n+l-1}} & L^{\beta_{n+l-1}}a_{i_{0}}
             \end{array}\right|.$$
That is,
\begin{equation} \label{eqpij}
\left|\begin{array}{cccc}
               L^{\beta_{0}}a_{i_{1}} & \cdots & L^{\beta_{0}}a_{i_{n+l-1}} & L^{\beta_{0}}(a_{j}-G_{i_{0}}^{j}a_{i_{0}})\\
               L^{\beta_{1}}a_{i_{1}} & \cdots & L^{\beta_{1}}a_{i_{n+l-1}} & L^{\beta_{1}}(a_{j}-G_{i_{0}}^{j}a_{i_{0}})\\
               \cdots & \cdots & \cdots & \cdots \\
               L^{\beta_{n+l-1}}a_{i_{1}} & \cdots & L^{\beta_{n+l-1}}a_{i_{n+l-1}} & L^{\beta_{n+l-1}}(a_{j}-G_{i_{0}}^{j}a_{i_{0}})
             \end{array}\right|\equiv 0.
\end{equation}
We further assert:

\bigskip
{\bf{Claim:}} In $O,$ we have,

\begin{equation} \label{eqzaj}
\left|\begin{array}{cccc}
               L^{\beta_{0}}a_{s_{1}} & \cdots & L^{\beta_{0}}a_{s_{n+l-1}} & L^{\beta_{0}}(a_{j}-\sum_{i=1}^{n+l-1}G_{i}^{j}a_{i}-G_{n+k}^{j}a_{n+k})\\
               L^{\beta_{1}}a_{s_{1}} & \cdots & L^{\beta_{1}}a_{s_{n+l-1}} & L^{\beta_{1}}(a_{j}-\sum_{i=1}^{n+l-1}G_{i}^{j}a_{i}-G_{n+k}^{j}a_{n+k})\\
               \cdots & \cdots & \cdots & \cdots \\
               L^{\beta_{n+l-1}}a_{s_{1}} & \cdots & L^{\beta_{n+l-1}}a_{s_{n+l-1}} & L^{\beta_{n+l-1}}(a_{j}-\sum_{i=1}^{n+l-1}G_{i}^{j}a_{i}-G_{n+k}^{j}a_{n+k})
             \end{array}\right| \equiv 0
\end{equation}
for all $s_{1} < s_{2} <\cdots <s_{n+l-1}$ with $\{s_{1},\cdots,s_{n+l-1}\} \subset \{1,\cdots,n+l-1,n+k\}$ and any $n+l \leq j \leq n+k-1.$

\begin{proof}
Assume that $(s_{1},\cdots,s_{n+l-1})=(1,\cdots,\widehat{s_{0}},\cdots,n+l-1,n+k).$ Notice that for any $n+l \leq j \leq n+k-1, i \neq s_{0}$ and $i \in \{1,\cdots,n+l-2,n+k\},$
\begin{equation}
\left|\begin{array}{cccc}
               L^{\beta_{0}}a_{s_{1}} & \cdots & L^{\beta_{0}}a_{s_{n+l-1}} & L^{\beta_{0}}(G_{i}^{j}a_{i})\\
               L^{\beta_{1}}a_{s_{1}} & \cdots & L^{\beta_{1}}a_{s_{n+l-1}} & L^{\beta_{1}}(G_{i}^{j}a_{i})\\
               \cdots & \cdots & \cdots & \cdots \\
               L^{\beta_{n+l-1}}a_{s_{1}} & \cdots & L^{\beta_{n+l-1}}a_{s_{n+l-1}} & L^{\beta_{n+l-1}}(G_{i}^{j}a_{i})
             \end{array}\right| \equiv 0.
\end{equation}
Combining this with equation (\ref{eqpij}), one can check that equation (\ref{eqzaj}) holds.
\end{proof}

By Lemma \ref{leal2}, equation (\ref{eqnondege}), and (\ref{eqzaj}), we immediately obtain that in $O,$
$$L^{\beta_{t}}(a_{j}-\sum_{i=1}^{n+l-1}G_{i}^{j}a_{i}-G_{n+k}^{j}a_{n+k})=0, \forall~1 \leq t \leq n+l-1,n+l \leq j \leq n+k-1.$$
In particular, when $t=0,$ we have:
\begin{equation}
a_{j}-\sum_{i=1}^{n+l-1}G_{i}^{j}a_{i}-G_{n+k}^{j}a_{n+k}=0,  n+l \leq j \leq n+k-1.
\end{equation}
That is, in $O,$
\begin{equation}\label{eqneG}
F_{j}+\overline{\phi_{z'_{j}}^{*}}-\sum_{i=1}^{n+l-1}\overline{G_{i}^{j}}(F_{i}+\overline{\phi_{z'_{i}}^{*}})-
\overline{G_{n+k}^{j}}(\frac{1}{2\sqrt{-1}}+\overline{\phi_{z'_{n+k}}^{*}})=0.
\end{equation}
Recall that we have, by shrinking $O$ if necessary, in $O,$

\begin{equation}\label{eqFde}
-\frac{F_{n+k}-\overline{F_{n+k}}}{2\sqrt{-1}}+F_{1}\overline{F_{1}}+
\cdots+F_{n+k-1}\overline{F_{n+k-1}}+\phi^{*}(F,\overline{F})=0,
\end{equation}
\begin{equation}\label{eqlFg}
\frac{L_{j}\overline{F_{n+k}}}{2\sqrt{-1}}+F_{1}L_{j}\overline{F_{1}}+\cdots+F_{n+k-1}
L_{j}\overline{F_{n+k-1}}+L_{j}\phi^{*}(F,\overline{F})=0, 1 \leq j \leq n,
\end{equation}
\begin{equation}\label{eqlFd}
\frac{L^{\beta_{t}}\overline{F_{n+k}}}{2\sqrt{-1}}+F_{1}L^{\beta_{t}}\overline{F_{1}}+\cdots+F_{n+k-1}
L^{\beta_{t}}\overline{F_{n+k-1}}+L^{\beta_{t}}\phi^{*}(F,\overline{F})=0, n+1 \leq t \leq n+l-1.
\end{equation}

We introduce local coordinates $(x,y,s)\in \mathbb R^n\times \mathbb R^n\times \mathbb R^d$ that vanish at the central ponit $p\in M$. By Theorem 2.9,  $G_{i}^{j},G_{n+k}^{j},F_{1},\cdots,F_{n+k}$ extend to almost analytic functions into a wedge $\{(x,y,s+it)\in U\times V\times \Gamma_1: \,(x,y,s)\in U\times V,\,t\in \Gamma_1\}$, with edge $M$ near $p=0$ for all $1 \leq i \leq n+l-1, n+l \leq j \leq n+k-1.$ Here $U\times V$ is a neighborhood of the origin in $\mathbb{C}^n \times \mathbb R^d$ and $\Gamma_1$ is an acute convex cone in $\mathbb R^d$ in $t-$space. We still denote the extended functions by $G_{i}^{j},G_{n+k}^{j},F_{1},\cdots,F_{n+k}.$  Arguments similar to those used in the proof of Theorem 2.3 imply that the $G_{i}^{j}$ and $G_{n+k}^{j}$ satisfy the estimates:
\[
\left |D_x^{\alpha}D_y^{\beta}D_s^{\gamma}G_{i}^{j}(z,s,t)\right |\leq \frac{C}{|t|^{\lambda}}, \left |D_x^{\alpha}D_y^{\beta}D_s^{\gamma}G_{n+k}^{j}(z,s,t)\right |\leq \frac{C}{|t|^{\lambda}},\,\,\text{for some $C,\lambda>0$}
\]
and
\[
 D_x^{\alpha}D_y^{\beta}D_s^{\gamma}\overline{\partial}_{w_\nu}G_{i}^{j}(z,s,t)=O(|t|^m),
 D_x^{\alpha}D_y^{\beta}D_s^{\gamma}\overline{\partial}_{w_\nu}G_{n+k}^{j}(z,s,t)=O(|t|^m),\,\,
 \]
for all $1 \leq i \leq n+l-1,n+l \leq j \leq n+k-1,1 \leq \nu \leq d,m \geq 1.$  Similar estimates hold for $F_{1},\cdots,F_{n+k}.$

\bigskip

We now use equations $(4.22), (4.23), (4.24)$ and $(4.21)$ to get a smooth map $\Psi(Z',\overline{Z'},W)=(\Psi_{1},\cdots,\Psi_{n+k})$ defined in a neighborhood of $\{0\} \times \mathbb{C}^{q}$ in $\mathbb{C}^{n+k} \times \mathbb{C}^{q},$~ smooth in the first $n+k$ variables and polynomial in the last $q$ variables for some integer $q$, such that,
$$\Psi(F,\overline{F},(L^{\alpha}\overline{F})_{1 \leq |\alpha| \leq l},
\overline{G_{1}^{n+l}},\cdots,\overline{G^{n+l}_{n+l-1}},\overline{G_{n+k}^{n+l}},\cdots,\overline{G_{1}^{n+k-1}},\cdots,
\overline{G_{n+l-1}^{n+k-1}},\overline{G_{n+k}^{n+k-1}})=0$$
at $(z,s,0)$ with $(z,s) \in U \times V.$ Write $$\overline{G}=(\overline{G_{1}^{n+l}},\cdots,\overline{G^{n+l}_{n+l-1}},\overline{G_{n+k}^{n+l}},\cdots,\overline{G_{1}^{n+k-1}},\cdots,
\overline{G_{n+l-1}^{n+k-1}},\overline{G_{n+k}^{n+k-1}}).$$
Observe that
$$\Psi_{Z'}|_{(p,(L^{\alpha}\overline{F})_{1\leq |\alpha|\leq l}(p),\overline{G}(p))}=\left(\begin{array}{ccc}
                   {\bf{0}}_{n+l-1} & {\bf{0}}_{k-l} & \frac{\sqrt{-1}}{2} \\
                   {\bf{B}}_{n+l-1} & {\bf{0}} & {\bf{b}} \\
                   {\bf{C}} & {\bf{I}}_{k-l} & {\bf{0}}^{t}_{k-l}
                 \end{array}\right),
$$
where ${\bf{0}}_{N}$ is an $N-$dimensional zero row vector,
${\bf{C}}$ is a $(k-l)\times (n+l-1)$ matrix, ${\bf{I}}_{k-l}$
is the  $(k-l) \times (k-l)$ identity matrix and we recall that ${\bf{B}}_{n+l-1}$ is an invertible $(n+l-1)\times
(n+l-1)$ matrix, ${\bf{0}}$ is an $(n+l-1) \times(k-l)$ zero
matrix, ${\bf{b}}$ is an $(n+l-1)-$dimensional column vector.

 The matrix $\Psi_{Z'}|_{(p,(L^{\alpha}\overline{F})_{1 \leq |\alpha|\leq l}(p),\overline{G}(p))}$ is invertible. By applying Theorem $3.1$, we get a solution $\psi=(\psi_{1},\cdots,\psi_{n+k})$ satisfying (\ref{e2.4}) and for each $1 \leq j \leq n+k,$

$$F_{j}=\psi_{j}(F,\overline{F},(L^{\alpha}\overline{F})_{1 \leq |\alpha| \leq l},
\overline{G})$$
at $(z,s,0)$ with $(z,s) \in U \times V.$
Recall that in the proof of Theorem 2.3, for each $i=1,\cdots,n,$ we denoted by $M_{i}$ the smooth extension of $L_{i}$ to $U \times V \times \mathbb{R}^d$ satisfying (\ref{eqn220}).
For each $1 \leq j \leq n+k,$ set
$$h_{j}(z,s,t)=\psi_{j}(F(z,s,-t),\overline{F}(z,s,-t),(M^{\alpha}\overline{F})_{1 \leq |\alpha| \leq l}(z,s,-t),\overline{G}(z,s,-t))$$ and shrink $U$ and $V$ and
choose $\delta$ in such a way that $h_{j}$  is well defined and
continuous in $\overline{\Omega_{-}}$ where $\Omega_{-}=\{(x,y,s+it):\,(x,y,s)\in U\times V,\,t\in -\Gamma_1, |t| \leq \delta \}$. The same proof as before leads to the estimates:

\[
\left |D_x^{\alpha}D_y^{\beta}D_s^{\gamma}h_j(z,s,t)\right |\leq \frac{C}{|t|^{\lambda}},\,\,\text{for some $C,\lambda>0$}
\]
and
\[
 D_x^{\alpha}D_y^{\beta}D_s^{\gamma}\overline{\partial}_{w_\nu}h_j(z,s,t)=O(|t|^m),\,\, \forall \nu=1,\cdots,d,\, m=1,2,\dots
 \] for $t\in -\Gamma_1$, $1 \leq j \leq n+k.$

Notice that the $F_{j}$ satisfy similar estimates in $\Gamma_1$, and  $b_{+}F_{j}=b_{-}h_{j}$ for each $1\leq j \leq n+k.$ Applying Theorem V.3.7 in [BCH] as before, we conclude that $F$ is smooth near $p$. This establishes Theorem \ref{thsmth}.

\end{proof}

\textbf{Proof of Theorem \ref{thmb}:} Fix $p \in \Omega_{2}.$ Let a neighborhood $\widetilde{O}$ of $p$ and $\{p_{i}\}_{0}^{\infty} \subset \widetilde{O}$ be as  mentioned in Remark 2.7, and write $d=\mathrm{deg}(F,p).$ Since $\mathrm{rank}_{d}(F,q) \leq n+d-1$ for all $q \in \widetilde{O},$ and $\mathrm{rank}_{d-1}(F,p_{i})=n+d-1$ for all $i \geq 0,$ by Theorem $\ref{thsmth},$ $F$ is smooth near $p_{i}$ for all $i \geq 0.$ This establishes Theorem 2.8.

Theorem $2.5$ follows from Theorem $2.8$ and Theorem $2.9$.

As a consequence of Theorem \ref{thsmth}, we immediately have
\begin{Corollary}
Let $M\subset \mathbb{C}^{n+1}$, $M'\subset \mathbb{C}^{n+k}$ be two smooth
strongly pseudoconvex real hypersurfaces $(n \geq 1,k \geq 1)$, $F:~ M\rightarrow M'$ be a $C^{2}-$smooth  CR  map. Assume that $\mathrm{rank}_{2}(F,p) \leq n+1$ everywhere in $M.$ Then F is smooth.
\end{Corollary}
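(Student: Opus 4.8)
The plan is to reduce the corollary to Theorem~\ref{thsmth} applied with $l=1$. If $F$ is constant it is trivially smooth, so assume $F$ is non-constant. Since $M$ is strongly pseudoconvex its Levi form is definite, so at every $\sigma\in T^{0}$ it has a nonzero (indeed a negative) eigenvalue; hence Theorem~\ref{thmn29} applies and every CR function near a point of $M$ has wave front set contained in an acute convex cone, and, $M$ being a hypersurface, extends holomorphically to the pseudoconvex side of $M$; write $\widetilde F$ for the holomorphic extension of $F$. The first genuine step is to show that $dF:\mathcal V_{p}\to\mathcal V'_{F(p)}$ is injective at \emph{every} $p\in M$. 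One argues that $\rho'\circ\widetilde F$ is plurisubharmonic near $p$ (since $\rho'$ is strictly plurisubharmonic near $M'$), is $\le 0$ on the pseudoconvex side and vanishes on $M$ (by the maximum principle applied to $\widetilde F$ and the local strong pseudoconvexity of $M'$), and is not identically zero near $p$ (otherwise $\widetilde F$ would map a piece of the pseudoconvex side into $M'$, forcing $\widetilde F$ to be constant since $M'$ contains no positive-dimensional complex variety); the Hopf lemma then yields that $F$ is CR transversal at $p$, and the identity relating the Levi form of $M$ at $p$ to the pullback under $dF|_{\mathcal V_{p}}$ of the Levi form of $M'$ — up to the nonzero transversal factor — forces $dF|_{\mathcal V_{p}}$ to be injective, both Levi forms being definite.

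Given this, Lemma~\ref{lerankc} (whose proof uses only first derivatives of $F$) gives $\mathrm{rank}_{1}(F,p)=n+1$ for all $p\in M$; together with the hypothesis $\mathrm{rank}_{2}(F,p)\le n+1$ and the monotonicity $\mathrm{rank}_{1}\le\mathrm{rank}_{2}$ this yields $\mathrm{rank}_{2}(F,p)=n+1$ for every $p\in M$. Now fix $p\in M$. If $k\ge 2$, then $\mathrm{rank}_{1}(F,p)=n+1$ and $\mathrm{rank}_{2}(F,q)=n+1$ for all $q$ in a neighborhood of $p$, which are precisely the hypotheses of Theorem~\ref{thsmth} with $l=1$; moreover, in the case $l=1$ the proof of Theorem~\ref{thsmth} differentiates $F$ at most twice (the auxiliary multiindices $\beta_{n+1},\dots,\beta_{n+l-1}$ do not occur, the determinantal identities only involve $L_{\nu}L^{\beta_{t}}{\bf a}$ with $|\beta_{t}|\le 1$, and the functions $G_{i}^{j}$ it produces are $C^{1}$ CR functions, to which Theorem~\ref{thmn29} still applies), so that argument goes through verbatim for the $C^{2}$ map $F$ even when $k>2$; hence $F$ is smooth near $p$. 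If $k=1$, then $\mathrm{rank}_{1}(F,p)=n+1$ means $E_{1}(p)=\mathbb C^{n+1}$ while $E_{0}(p)$ is one-dimensional, so $F$ is $1$-nondegenerate at $p$, and Theorem~\ref{thm12} — whose wave front hypothesis is supplied by Theorem~\ref{thmn29} — shows that $F$ is smooth near $p$. Since $p$ was arbitrary, $F\in C^{\infty}(M)$.

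The main obstacle is the injectivity of $dF$ at every point: unlike in Theorem~\ref{thm} it is not assumed, and it must be extracted from the strong pseudoconvexity of $M$ and $M'$ and the non-constancy of $F$, through transversality and the Levi-form pullback identity. Everything afterwards is bookkeeping — checking that the rank hypotheses of Theorem~\ref{thsmth} hold with $l=1$, that its proof needs only two derivatives of $F$ in that case, and disposing of the value $k=1$ via Theorem~\ref{thm12}.
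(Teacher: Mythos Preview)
Your proof is correct and follows the same line as the paper's: assume $F$ is non-constant, use the Hopf-lemma/Lewy-extension argument to get injectivity of $dF$ on $\mathcal V_p$ (the paper simply points to its Appendix for this), invoke Lemma~\ref{lerankc} to obtain $\mathrm{rank}_1(F,p)=n+1$ everywhere, and then apply Theorem~\ref{thsmth} with $l=1$, noting that in this case its proof needs only two derivatives of $F$. Your extra separation of the case $k=1$ via Theorem~\ref{thm12} is harmless but not needed: when $l=1$ the auxiliary multiindices $\beta_{n+1},\dots,\beta_{n+l-1}$ are absent and the block involving the $G_i^j$ is empty, so the proof of Theorem~\ref{thsmth} collapses to the $1$-nondegenerate argument already for $k=1$.
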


\begin{proof}
We may assume that $F$ is nonconstant. By a well known argument using Hopf's lemma as in the Appendix, $dF:T_{p}^{(1,0)}M \rightarrow T_{F(p)}^{(1,0)} M'$ is injective at every $p \in M.$ Note that $\mathrm{rank}_{1}(F,p)=n+1$ for all $p \in M$ by Lemma \ref{lerankc}. By Theorem \ref{thsmth} (note that in this case, the proof showed that we did not need $F$ to be $C^k$), we arrive at the conclusion.
\end{proof}

Since a CR diffeomorphism of class $C^k$ of a $k-$nondegenerate manifold is $k-$nondegenerate, Thoerem 2.3 implies the following:
\begin{Corollary}
Let $M\subset \Bbb C^N$ be a generic CR manifold that is $k_0-$nondegenerate.  Suppose $H=(H_{1},\cdots,H_{N}): M \rightarrow M$  is a CR diffeomorphism of class $C^{k_{0}}$  such that for some $p_0\in M$ and an open convex cone $\Gamma \subset \mathbb{R}^{d},$
$$\mathrm{WF}(H_{j})|_{p_{0}} \subset \Gamma, j=1,\cdots,N$$
where $d$ is the CR codimension of $M.$ Then $H$ is $C^{\infty}$ in some neighborhood of $p_{0}.$
\end{Corollary}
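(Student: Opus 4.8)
The plan is to reduce the statement directly to Theorem \ref{thm12}. Regard $M\subset\mathbb{C}^N$ simultaneously as the (abstract) source CR manifold and as the embedded generic target manifold, and regard $H\colon M\to M$ as a $C^{k_0}$ CR map into the generic submanifold $M\subset\mathbb{C}^N$. Since the wave front hypothesis $\mathrm{WF}(H_j)|_{p_0}\subset\Gamma$ for an open convex cone $\Gamma$ is already assumed, Theorem \ref{thm12} will apply as soon as we check that $H$ is $k_0$-nondegenerate at $p_0$.

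To verify $k_0$-nondegeneracy of $H$ at $p_0$, I would use that $H$ is a CR \emph{diffeomorphism}. Then $dH$ maps $\mathcal V$ isomorphically onto $\mathcal V$ (the source and target CR bundles both have rank $n$, so the inclusion $dH(\mathcal V)\subset\mathcal V$ is an equality), and hence if $L_1,\dots,L_n$ is a basis of CR vector fields near $p_0$, then $\widetilde L_j:=H_\ast L_j$ is a basis of CR vector fields near $q_0:=H(p_0)$. Writing $g_\mu(W):=\rho_{\mu,Z'}(W,\overline W)$ for defining functions $\rho=(\rho_1,\dots,\rho_d)$ of $M$ near $q_0$, one has $\rho_{\mu,Z'}(H(Z),\overline{H(Z)})=g_\mu\circ H$, and iterating $L_i(f\circ H)=\big((H_\ast L_i)f\big)\circ H$ gives
\[
L^\alpha\big(\rho_{\mu,Z'}(H(Z),\overline{H(Z)})\big)\big|_{Z=p_0}\;=\;\big(\widetilde L^\alpha g_\mu\big)\big|_{W=q_0},\qquad 0\le|\alpha|\le k_0,\ 1\le\mu\le d .
\]
Thus the space $E_l(p_0)$ associated with $H$ and the basis $\{L_j\}$ coincides, for each $0\le l\le k_0$, with the space $E_l(q_0)$ associated with $\mathrm{id}_M$ and the basis $\{\widetilde L_j\}$. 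Since $k_0$-nondegeneracy is independent of the choice of basis of CR vector fields, of defining functions and of holomorphic coordinates (see [La2]), and since $M$ is $k_0$-nondegenerate, i.e.\ $\mathrm{id}_M$ is $k_0$-nondegenerate at $q_0$, one concludes $E_{k_0-1}(p_0)\neq E_{k_0}(p_0)=\mathbb{C}^N$ for $H$, so that $H$ is $k_0$-nondegenerate at $p_0$. This is precisely the assertion recorded in the sentence preceding the Corollary.

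With $H$ now known to be a $C^{k_0}$ CR map of $M$ into $M\subset\mathbb{C}^N$ that is $k_0$-nondegenerate at $p_0$ and satisfies $\mathrm{WF}(H_j)|_{p_0}\subset\Gamma$, Theorem \ref{thm12} yields that $H$ is $C^\infty$ in a neighborhood of $p_0$. There is no real obstacle in this argument: the only substantive point is the transfer of $k_0$-nondegeneracy across the CR diffeomorphism, which amounts to the chain rule displayed above together with the known basis-invariance of the notion. The mild care needed is simply in interpreting ``$M$ is $k_0$-nondegenerate'' as $k_0$-nondegeneracy of $\mathrm{id}_M$ at the point $H(p_0)$, which is the standard pointwise convention.
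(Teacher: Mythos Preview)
Your proposal is correct and follows essentially the same approach as the paper: the paper simply remarks that ``a CR diffeomorphism of class $C^{k}$ of a $k$-nondegenerate manifold is $k$-nondegenerate'' and then invokes Theorem~\ref{thm12}, whereas you supply the chain-rule computation justifying this remark before applying the same theorem. The only minor point worth noting is that the pushed-forward basis $\widetilde L_j=H_\ast L_j$ has coefficients of class $C^{k_0-1}$ rather than $C^\infty$, but the basis-invariance of the spaces $E_l$ holds equally well for such bases (one writes $\widetilde L_j$ in a smooth basis with an invertible coefficient matrix and expands), so this causes no difficulty.
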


\section {Appendix}
\subsection{On CR mappings into a lower dimensional target}
In this appendix we include a result which shows why we don't consider the case when the target manifold has a lower CR dimension. The result is known to experts but we have presented it here since we are not aware of a  reference.
\begin{theorem}
Let  $M \subset \mathbb{C}^{N},M' \subset \mathbb{C}^{N'}$ be smooth strongly pseudoconvex real hypersurfaces with $N \geq 2,N' \geq 2.$ Let $F: M \rightarrow M'$ be a CR mapping of class $C^{2}.$ Assume that $N' < N.$ Then $F$ is a constant map.
\end{theorem}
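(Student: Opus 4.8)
The plan is to argue by contradiction, reducing everything to the transversality of a non-constant CR map between strongly pseudoconvex hypersurfaces, which is the point where Hopf's lemma enters. Suppose $F$ is non-constant and fix $p\in M$, $p'=F(p)\in M'$. Choose strictly plurisubharmonic defining functions $\rho$ for $M$ near $p$ and $\rho'$ for $M'$ near $p'$, with pseudoconvex sides $\Omega=\{\rho<0\}$ and $\Omega'=\{\rho'<0\}$. Since $M$ is strongly pseudoconvex and each component of $F$ is a $C^2$ CR function, H. Lewy's extension theorem produces a holomorphic map $\widetilde F$ on a one-sided neighborhood $D=\Omega\cap B$ of $p$ ($B$ a small ball), continuous up to $M\cap B$, with $\widetilde F|_{M\cap B}=F$; moreover $\widetilde F$ carries $D$ into $\overline{\Omega'}$.

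Set $\varphi:=\rho'\circ\widetilde F$ on $D$. Then $\varphi$ is plurisubharmonic ($\rho'$ is plurisubharmonic, $\widetilde F$ holomorphic), $\varphi\le0$ on $D$ (as $\widetilde F(D)\subset\overline{\Omega'}$), and $\varphi\equiv0$ on $M\cap B$ (as $F(M)\subset M'$). If $\varphi\equiv0$ throughout $D$, then $\widetilde F(D)\subset M'$; differentiating $\rho'(\widetilde F,\overline{\widetilde F})\equiv0$ first in $z$ and then in $\overline z$, and using that $\widetilde F$ is holomorphic and $\rho'$ strictly plurisubharmonic, one finds that the full holomorphic Jacobian of $\widetilde F$ vanishes identically on $D$, so $\widetilde F$ — and hence $F$ — is constant, contrary to assumption. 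Thus $\varphi\not\equiv0$; since $\varphi$ is subharmonic and $\le0$ on the connected set $D$ and vanishes on the boundary piece $M\cap B$, the strong maximum principle forces $\varphi<0$ in the interior of $D$. Applying Hopf's lemma to the superharmonic function $-\varphi$ at the smooth boundary point $p$ (the interior-ball condition holds since $M$ is smooth), the outward normal derivative of $\varphi$ at $p$ is strictly positive. Since $\varphi$ vanishes on $M=\{\rho=0\}$ and is smooth enough up to $M$, write $\varphi=h\,\rho$ near $p$; the positivity of the normal derivative gives $h(p)>0$, which is precisely the statement that $F$ is transversal to $M'$ at $p$.

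Now compare Levi forms. Evaluating the complex Hessian of $\varphi=h\rho$ at $p$ on a vector $v\in T^{1,0}_pM$, and using $\rho(p)=0$ together with $\sum_i\rho_{z_i}(p)v_i=0$ (the defining relation of $T^{1,0}_pM$), all terms involving derivatives of $h$ or the value $\rho(p)$ drop out, while the Hessian of $\varphi=\rho'\circ\widetilde F$ computed via the chain rule and the holomorphy of $\widetilde F$ gives the left-hand side; one obtains
\begin{equation*}
\mathcal L_{\rho'}\big(dF_p v,\,dF_p v\big)=h(p)\,\mathcal L_{\rho}(v,v),\qquad v\in T^{1,0}_pM ,
\end{equation*}
where $\mathcal L_\rho,\mathcal L_{\rho'}$ denote the Levi forms of $M$ and $M'$ (with respect to $\rho,\rho'$). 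By strong pseudoconvexity $\mathcal L_\rho$ is positive definite on $T^{1,0}_pM$ and $\mathcal L_{\rho'}$ is positive definite on $T^{1,0}_{p'}M'$; since $h(p)>0$, the identity shows $dF_p\colon T^{1,0}_pM\to T^{1,0}_{p'}M'$ is injective. Hence $N-1=\dim_{\mathbb C}T^{1,0}_pM\le\dim_{\mathbb C}T^{1,0}_{p'}M'=N'-1$, i.e. $N\le N'$, contradicting the hypothesis $N'<N$. Therefore $F$ must be constant.

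The substantive technical points — and the ones I expect to require the most care — are the ``soft'' inputs used in the first two paragraphs: that a $C^2$ CR map of a strongly pseudoconvex hypersurface extends holomorphically to the pseudoconvex side, that this extension maps the pseudoconvex side of $M$ into the pseudoconvex side of $M'$ (so that $\varphi\le0$; this is a maximum-principle argument for the plurisubharmonic function $\rho'\circ\widetilde F$), and that the extension is regular enough up to $M$ for Hopf's lemma and the factorization $\varphi=h\rho$ to be applied as stated. All of these are classical for strongly pseudoconvex hypersurfaces and should be invoked with precise references (this is essentially the ``well known argument using Hopf's lemma'' referred to elsewhere in the paper); once transversality at an arbitrary $p\in M$ is established, the Levi-form comparison and the dimension count are immediate.
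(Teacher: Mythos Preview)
Your argument is correct and shares the same skeleton as the paper's proof: both assume $F$ is nonconstant, invoke Lewy's extension to the pseudoconvex side, use the Hopf lemma (after ruling out the case $\rho'\circ\widetilde F\equiv 0$ via the strict plurisubharmonicity of $\rho'$) to obtain transversality, and then derive a contradiction from a rank obstruction on $T^{1,0}$. The difference is only in how the final contradiction is packaged. The paper works in normalized coordinates, expands $F$ to second order, and uses a weighted-homogeneity argument on the defining equation to extract the matrix identity $\lambda I_{N-1}=AA^{*}$, which is impossible since $A$ is $(N-1)\times(N'-1)$ with $N'-1<N-1$. Your Levi-form identity $\mathcal L_{\rho'}(dF_p v,dF_p v)=h(p)\,\mathcal L_{\rho}(v,v)$ is precisely the coordinate-free version of that matrix equation (with $A$ the matrix of $dF_p|_{T^{1,0}_pM}$ and $h(p)=\lambda$), and your injectivity/dimension count is the same contradiction. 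Your route is a bit more conceptual and avoids the explicit Taylor bookkeeping; the paper's route makes all constants visible. The technical caveats you flag---that the holomorphic extension is regular enough up to $M$ for the second-order computations, and that $\widetilde F$ maps the pseudoconvex side of $M$ into that of $M'$---are exactly the points the paper also uses without detailed justification (it handles the latter by filling $\Omega$ with analytic discs attached to $M$, which is the precise form of the maximum-principle argument you allude to).
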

\begin{proof}
 Suppose that $F$ is nonconstant. Fix $p \in M, p' \in M'$ with $p'=F(p).$ Choose suitable coordinates in $\mathbb{C}^{N}$ and $\mathbb{C}^{N'}$ near $p, p'$ such that: $p=0,p'=0$ and $M$ is locally defined by
$$r(Z,\overline{Z})=-\frac{z_{N}-\overline{z_{N}}}{2\sqrt{-1}}+z_{1}\overline{z}_{1}+\cdots+ z_{N-1}\overline{z_{N-1}} +\phi(Z,\overline{Z})~\text{near}~p,$$
$M'$ is locally defined by
$$\rho(Z',\overline{Z'})=-\frac{z'_{N'}-\overline{z'_{N'}}}{2\sqrt{-1}}+z'_{1}\overline{z'}_{1}+\cdots+ z'_{N'-1}\overline{z'_{N'-1}} +\phi^{*}(Z',\overline{Z'})~\text{near}~p'.$$
Here $Z=(z_{1},\cdots,z_{N}),Z'=(z'_{1},\cdots,z'_{N'})$ are the coordinates of $\mathbb{C}^{N}$ and $\mathbb{C}^{N'}$ respectively. Moreover, $\phi(Z,\overline{Z})=O(|Z|^{3}),\phi^{*}(Z',\overline{Z'})=O(|Z'|^{3})$ are real-valued smooth functions near $p,p'$ respectively. We write
$$L_{i}=\frac{\partial r}{\partial z_{N}}\frac{\partial}{\partial z_{i}}-\frac{\partial r}{\partial z_{i}}\frac{\partial}{\partial z_{N}},~~T=\sqrt{-1}(\frac{\partial r}{\partial \overline{z}_{N}}\frac{\partial}{\partial z_{N}}-\frac{\partial r}{\partial z_{N}}\frac{\partial}{\partial \overline{z}_{N}}),1 \leq i \leq N-1,$$
which are vector fields tangent to $M$ near the central point $0.$

Write $F=(F_{1},\cdots,F_{N'}).$ Near $0$ on $M$ we have
\begin{equation}\label{e1}
\rho(F,\overline{F})=-\frac{F_{N'}-\overline{F_{N'}}}{2\sqrt{-1}}+F_{1}\overline{F}_{1}+\cdots+ F_{N'-1}\overline{F_{N'-1}} +\phi^{*}(F,\overline{F})=0.
\end{equation}
Applying $L_{i},1 \leq i \leq N-1$ to equation (\ref{e1}) and evaluating at $0$, one easily gets
$$\frac{\partial F_{N'}}{\partial z_{i}}|_{0}=0,1 \leq i \leq N-1.$$
Similarly, by applying $L_{k}L_{l},1 \leq k,l \leq N-1$ to equation (\ref{e1}) and evaluating at $0,$ we get,
$$\frac{\partial^{2} F_{N'}}{\partial z_{k}\partial z_{l}}|_{0}=0,1 \leq k,l \leq N-1.$$

By the Lewy extension theorem, $F$ extends holomorphically to the pseudoconvex side of $M$ denoted by $\Omega.$ We  may assume that $\Omega$ is a union of analytic discs attached to $M$. That is, for each $q\in \Omega$, there exists a continuous function
\[
G: \overline{\Delta}\rightarrow \Bbb C^N \quad (\Delta=\{\zeta\in \Bbb C:|\zeta|<1\})
\]
analytic on $\Delta$ such that $G(\Delta)\subset \Omega$, $G(\partial \Delta)\subset M$, and $G(0)=q$. For each such $G$, the function $\rho\circ F\circ G$ is continuous on $\overline{\Delta}$, subharmonic on $\Delta$ and vanishes on $\partial \Delta$. If this function is constant for every analytic disc $G$ attached to $M$, then $F$ would map $\Omega$ into $M'$ which would contradict the strict pseudoconvexity of $M'$ unless $F$ is constant.
 This allows us to apply the maximum principle and the Hopf lemma to the subharmonic function $\rho(F,\overline{F})\leq 0$ near $p$ over $\Omega$ to conclude that
\begin{equation}
\frac{\partial}{\partial \mathrm{Im} (z_{N})}(\rho(F,\overline{F}))|_{0}= \frac{\partial}{\partial \mathrm{Im} (z_{N})}(-\mathrm{Im}(F_{N'})+ \sum_{j=1}^{N'-1}|F_{j}|^{2})|_{0}=-\lambda <0,
\end{equation}
 for some $\lambda >0,$ that is,
\begin{equation}
\sqrt{-1}(\frac{\partial}{\partial z_{N}}-\frac{\partial}{\partial \overline{z_{N}}})(-\frac{F_{N'}-\overline{F}_{N'}}{2\sqrt{-1}})|_{0}=-\frac{1}{2}(\frac{\partial F_{N'}}{\partial z_{N}}+\frac{\partial \overline{F}_{N'}}{\partial \overline{z}_{N}})|_{0}=-\frac{\partial F_{N'}}{\partial z_{N}}|_{0}=-\lambda <0.
\end{equation}
Here we have used the fact that
\begin{equation}\label{e2}
T(\rho(F,\overline{F}))=0\,\,\
\end{equation}
which implies $\frac{\partial F_{N'}}{\partial z_{N}}|_{0}=\frac{\partial \overline{F}_{N'}}{\partial \overline{z}_{N}}|_{0}.$
Hence we can write:
\begin{equation}\label{e3}
F_{N'}=\lambda z_{N}+ O(|z_{N}||\tilde{z}|+|z_{N}|^2)+o(|Z|^{2}),\,\,\,\tilde z=(z_1,\dots,z_{N-1})
\end{equation}
\begin{equation}\label{e4}
F_{j}=b_{j} z_{N}+ \sum_{i=1}^{N-1} a_{ij}z_{i} + O(|Z|^{2}), 1 \leq j \leq N'-1,
\end{equation}
where $b_{j} \in \mathbb{C}, a_{ij} \in \mathbb{C}, 1 \leq i \leq N-1, 1 \leq j \leq N'-1.$ That is,
\begin{equation}\label{e40}
(F_{1},\cdots,F_{N'-1})=z_{N}(b_{1},\cdots,b_{N'-1})+(z_{1},\cdots,z_{N-1})A+ (\hat{F}_{1},\cdots,\hat{F}_{N'-1}),
\end{equation}
where $A=(a_{ij})_{(N-1)\times(N'-1)}$ is an $(N-1) \times (N'-1)$ matrix, and $\hat{F}_{j}=O(|Z|^{2}),$ for any $1 \leq j \leq N'-1.$
Next we write $Z=(\widetilde{z},z_{N}),$ where $\widetilde{z}=(z_{1},\cdots,z_{N-1}),$  and we introduce the notion of weighted degree: For a function $h$ on $M,$ we write $h \in o_{wt}(s)$ if
$$\lim_{t \rightarrow 0^{+}} \frac{h(t\widetilde{z},t^{2}z_{N},t\overline{\widetilde{z}},t^{2}\overline{z}_{N})}{t^{s}}\rightarrow  0$$
uniformly with respect to $(\widetilde{z},z_{N})\approx 0$ in $\mathbb{C}^{N-1} \times \mathbb{C}.$ That is, we equip $\widetilde{z},z_{N}$ with weighted degrees $1,2$ respectively.

From equation (\ref{e1})
\begin{equation}\label{e5}
\frac{F_{N'}-\overline{F}_{N'}}{2 \sqrt{-1}}=(F_{1},\cdots,F_{N'-1})(\overline{F}_{1},\cdots,\overline{F}_{N'-1})^{t}+\phi^{*}(F,\overline{F}),
\end{equation}
whenever $z_{N}=u +\sqrt{-1}(|\widetilde{z}|^2 +\phi(Z,\overline{Z}))$ near $0.$  We can rewrite  equation (\ref{e5}) in terms of $u$ and $\widetilde{z}$ by using equations (\ref{e3}),(\ref{e4}),(\ref{e40}):
\begin{equation}
\lambda |\widetilde{z}|^{2}+o_{wt}(2)=(z_{1},\cdots,z_{N-1})A A^{*}(\overline{z}_{1},\cdots,\overline{z}_{N-1})^{t}+o_{wt}(2)
\end{equation}
 Then by collecting terms on both sides of weighted degree two, one easily gets,
 $$\lambda |\widetilde{z}|^{2}=(z_{1},\cdots,z_{N-1})A A^{*}(\overline{z}_{1},\cdots,\overline{z}_{N-1})^{t},$$
 which implies that,
 \begin{equation}\label{e6}
 \lambda {\bf{I}}_{N-1}=A A^{*},
 \end{equation}
 where ${\bf{I}}_{N-1}$ is the $(N-1) \times (N-1)$ identity matrix. But $A$ is an $(N-1) \times (N'-1)$ matrix with rank at most $N'-1$ and so (\ref{e6}) can not hold since $N'-1 <N-1.$

\end{proof}

\bibliographystyle{amsalpha}

\begin{thebibliography}{99999}


\bibitem [BER] {BER} S. Baouendi, P. Ebenfelt, and  L. Rothschild, {Real submanifolds in complex space and their mappings},  {\it Princeton University Press,} Princeton, (1999).
\bibitem[BHR]{BHR} M. S. Baouendi, X. Huang and L. Rothschild, {Regularity of CR mappings between algebraic hypersurfaces}, {\it Invent. Math.}, 125, 13-36 (1996).
\bibitem[BJT]{BJT} M. S. Baouendi, H. Jacobowitz and F. Treves, {On the analyticity of CR mappings}, {\it Annals of Mathematics}, 122, 365-400 (1985).

\bibitem[BR]{BHR} M. S. Baouendi and L. Rothschild, {Remarks on the generic rank of a CR mapping}, {\it J. Geom. Anal.}, 2, 1-9 (1992).


\bibitem [Be] {Be} E. Bedford, {Proper holomorphic mappings}, {\it Bull. Amer. Math. Soc.}, 10, 157-175 (1984).
\bibitem [BN] {BN} S. Bell and R. Narasimhan, {Proper holomorphic mappings of complex spaces}, {\it EMS 69, Several Complex Variables VI}, Springer-Verlag, (1990).
\bibitem [BCH] {BCH} S. Berhanu, P. Cordaro, and J. Hounie, {An introduction to involutive structure}, {\it Cambridge University Press}, (2008)
\bibitem [BH]{BH} S. Berhanu and J. Hounie, {An F. and M. Riesz theorem for planar vector fields}, {\it Math. Ann.}, 320, 463-485 (2001).
\bibitem[BX]{BX} S. Berhanu and M. Xiao, {On the regularity of CR mappings between CR manifolds of hypersurface type}, preprint.
\bibitem [CKS]{CKS} J. A. Cima, S. Krantz, and T. J. Suffridge, {A reflection principle for proper holomorphic mappings of strongly pseudoconvex domains and applications}, {\it Math Z.}, 186, 1-8 (1984).
\bibitem [CS]{CS} J. A. Cima and T. J. Suffridge, {A reflection principle with applications to  proper holomorphic mappings}, {\it Math Ann.}, 265, 489-500 (1983).
\bibitem[CGS]{CGS} B. Coupet, H. Gaussier, and A. Sukhov, {Regularity of CR maps between convex hypersurfaces of finite type}, {\it Proc. Amer. Math. Soc.}, 127, 3191-3200 (1999).
\bibitem [DW]{DW} K. Diedrich and S. Webster, {A reflection principle for degenerate hypersurfaces}, {\it Duke Math J.}, 47, 835-843 (1980).
\bibitem [E]{E} P. Ebenfelt, {New invariant tensors in CR structures and a normal form for real hypersurfaces at a generic Levi degeneracy}, {\it J. Differential Geometry }, 50, 207-247 (1998).
\bibitem [EH]{EH} P. Ebenfelt and X. Huang, {On a generalized reflection principle in $\Bbb C^2$}, {\it Ohio State Univ. Math. Res. Inst. Publ.}, 9, 125-139 (2001).
\bibitem[EL]{EL} P. Ebenfelt and B. Lamel, {Finite jet determination of CR embeddings}, {\it J. Geom. Anal.}, 14, 241-265 (2004).
\bibitem [Fe] {Fe} C. Fefferman, {The Bergman kernel and biholomorphic mappings of pseudoconvex domains}, {\it Invent. Math.}, 26, 1-65 (1974).
\bibitem [Fr1] {Fr1} F. Forstneric, {Extending proper holomorphic mappings of positive codimension}, {\it Invent. Math.}, 95, 31-62 (1989).
\bibitem [Fr2] {Fr2} F. Forstneric, {A survey on proper holomorphic mappings}, {Proceeding of Year in SCVs at Mittag-Leffler Institute}, {\it Math. Notes} 38, Princeton, NJ: Princeton University Press, (1992).
\bibitem [Fr3] {Fr3} F. Forstneric, {Mappings of strongly pseudoconvex Cauchy-Riemann manifolds}, {Several complex variables and complex geometry}, {\it Proc. Sympos. Pure Math 52, Part 1}, Amer. Math. Soc., Providence, RI, 59-92  (1991).
\bibitem[H]{H} L. Hormander, {The Analysis of Linear Partial Differential Operators: I. Distribution theory and Fourier analysis.}, {\it Springer-Verlag}, Berlin, (1990)
\bibitem [Hu1] {Hu2} X. Huang, {On the mapping problem for algebraic real hypersurfaces in the complex spaces of different dimensions}, {\it Ann. Inst. Fourier}, 44, 433-463 (1994).
\bibitem [Hu2] {Hu2} X. Huang, {Geometric analysis in several complex variables}, {Ph.D. Thesis}, Washington University in St. Louis (1994).
\bibitem [J] {J}  H. Jacobowitz, {An introduction to CR structures}, {\it American Mathematical Society}, (1990)

\bibitem [KP] {KP} S.V. Khasanov and S. I. Pinchuk, {Asymptotically holomorphic functions and their applications}, {\it Mat. Sb (N.S.)}, 134(176), 546-555 (1987).
\bibitem [K]{K} S. Kim, {Complete system of finite order for CR mappings between real analytic hyper surfaces of degenerate Levi form}, {\it J. Korean Math. Soc.}, 38, 87-99 (2001).

\bibitem [La1] {LA} B. Lamel, {A $C^{\infty}-$regularity for nondegenerate CR mappings}, {\it Monatsh. Math.}, 142, 315-326 (2004).
\bibitem [La2] {LA} B. Lamel, {A reflection principle for real-analytic submanifolds of complex spaces}, {\it J. Geom. Anal.}, 11, 625-631 (2001).
\bibitem [La3] {LA} B. Lamel, {Holomorphic maps of real submanifolds in complex spaces of different dimensions}, {\it Pac. J. Math.}, 201, 357-387 (2001).
\bibitem [Le] {Le} H. Lewy, {On the boundary behavior of holomorphic mappings}, {\it Acad. Naz. Lincei}, 3, 1-8 (1977).

\bibitem[M]{M} N. Mir, {An algebraic characterization of holomorphic nondegeneracy for real algebraic hypersurfaces and its application to CR mappings}, {\it Math. Z.}, 231, 189-202 (1999).

\bibitem [NWY] {NWY} L. Nirenberg, S. Webster, and P. Yang, {Local boundary regularity of holomorphic mappings}, {\it Comm Pure Appl Math.}, 33, 305-338 (1980).

\bibitem [Pi] {Pi} S. I. Pinchuk, {On analytic continuation of biholomorphic mappings}, {\it Mat. USSR Sb.}, 105, 574-594 (1978).

\bibitem [GR] {GR} G. Roberts, {Smoothness of CR maps between certain finite type hypersurfaces in complex space},  {Ph.D. Thesis,} Purdue University(1988).
\bibitem[S]{S} N. Stanton, {Infinitesimal CR automorphisms of real hypersurfaces}, {\it Amer. J. of Math}, 118, 209-233 (1996).

\bibitem [T] {T} F. Treves, {Hypo-analytic structures}, {\it Princeton University Press}, Princeton, (1992).

\bibitem [Tu] {Tu} A. Tumanov, {Analytic discs and the regularity of CR mappings in higher codimension}, {\it Duke Math. J.}, 76, 793-803 (1994).

\bibitem [W]{W} S. Webster, {On mapping an $n-$ball into an $(n+1)-$ball in complex space}, {\it Pac. J. Math}, 81, 267-272 (1979).

\end{thebibliography}

\end{document}